\documentclass[11pt,twoside, reqno]{article}

\usepackage{amsfonts}
\usepackage{amssymb}
\usepackage{amsmath}
\usepackage{amsthm}
\usepackage{latexsym}
\usepackage{mathrsfs}
\usepackage{bbm}
\usepackage{comment}

\usepackage{txfonts}
\usepackage{enumerate}
\usepackage{hyperref}
\usepackage{graphicx}
\usepackage{authblk}

\usepackage{color}
\usepackage{pgf,tikz}
\usepackage{mathrsfs}
\usetikzlibrary{arrows}
\pagestyle{empty}

\allowdisplaybreaks

\pagestyle{myheadings}
\markboth{\footnotesize\sc Jun Cao, Li-Juan Cheng, Anton Thalmaier}
{\footnotesize\sc Calder\'{o}n-Zygmund inequalities on complete Riemannian manifolds}

\textwidth=15.5truecm
\textheight=22.10truecm
\oddsidemargin=0.35cm
\evensidemargin=0.35cm
\topmargin=-0.5cm

\usepackage{indentfirst}

\def\fz{\infty}

\def\r{\right}
\def\lf{\left}

\newcommand\SF{\mathscr F}
\newcommand\SM{\mathscr M}
\def\az{\alpha}
\def\lz{\lambda}

\def\dz {\delta}

\def\epz{\sigma}

\def\bz{\beta}

\def\fai{\varphi}
\def\gz{{\gamma}}

\def\sz{\sigma}

\def\wz{\widetilde}

\def\ls{\lesssim}

\def\dsum{\displaystyle\sum}
\def\dint{\displaystyle\int}
\def\dfrac{\displaystyle\frac}
\def\dsup{\displaystyle\sup}
\def\hs{\hspace{0.3cm}}

\def\e{{\rm e}}
\def\mathpal#1{\mathop{\mathchoice{\text{\rm #1}}%
   {\text{\rm #1}}{\text{\rm #1}}%
   {\text{\rm #1}}}\nolimits}
\newcommand\Hess{\mathpal{Hess}}
\newcommand\Ric{\mathpal{Ric}}

\renewcommand\div{\mathpal{div}}
\newcommand\tr{\mathpal{tr}}
\newcommand\HS{{\mathop{\scriptscriptstyle\rm HS}\nolimits}}
\newcommand\mequal{\overset{\text{\tiny m}}{=}}

\newcommand\E{\mathbb{E}}
\newcommand\R{\mathbb{R}}
\newcommand\N{\mathbb{N}}
\newcommand\Z{\mathbb{Z}}

\newtheorem{theorem}{Theorem}[section]
\newtheorem{lemma}[theorem]{Lemma}
\newtheorem{corollary}[theorem]{Corollary}
\newtheorem{proposition}[theorem]{Proposition}
\theoremstyle{definition}
\newtheorem{remark}[theorem]{Remark}

\numberwithin{equation}{section}
\def\supp{{\mathop\mathrm{\,supp\,}}}

\numberwithin{equation}{section}

\begin{document}

\title{\vskip-2.1cm\bf\Large Hessian heat kernel estimates and
  Calder\'{o}n-Zygmund inequalities\\ on complete Riemannian manifolds
  \footnotetext{\hspace{-0.35cm} * Corresponding author\endgraf
    \hspace{-0.35cm} 2010 {\it Mathematics Subject
      Classification}. Primary: 35K08; Secondary: 35J10, 35J30, 47G40.
    \endgraf {\it Key words and phrases}.  Brownian motion, Heat
    semigroup, Heat kernel, Bismut type Hessian representations,
    Hessian formula, Calder\'{o}n-Zygmund inequalities, the
    Hardy-Littlewood maximal function, good-$\lambda$ inequalities,
    Kato class.  \endgraf This work has been project is supported by
    the National Natural Science Foundation of China (Grant
    No. 12071431). The third author has been supported by the Fonds
    National de la Recherche Luxembourg (project GEOMREV
    O14/7628746).}}

\author[1]{Jun Cao} \author[2]{Li-Juan Cheng*} \author[3]{Anton Thalmaier}

\affil[1]{\small Department of Applied Mathematics, Zhejiang
  University of Technology,\par
  Hangzhou 310023, The People's Republic of China\par
  \texttt{caojun1860@zjut.edu.cn} \vspace{1em}
  }
 \affil[2]{\small School of  Mathematics, Hangzhou Normal University\par
    Hangzhou 311123, The People's Republic of China\par
    \texttt{chenglj@zjut.edu.cn}\vspace{1em}}
\affil[3]{\small
  Department of Mathematics, University of Luxembourg, Maison du
  Nombre,\par
  L-4364 Esch-sur-Alzette, Luxembourg\par
  \texttt{anton.thalmaier@uni.lu}\vspace{1em}}

\date{\today}
\maketitle

\begin{abstract} {\noindent We address some fundamental questions
    about geometric analysis on Riemannian manifolds. The
    $L^p$-Calder\'on-Zygmund inequality is one of the cornerstones in
    the regularity theory of elliptic equations, and it has been
    asked under which geometric conditions it holds for a reasonable class
    of non-compact Riemannian manifolds, and to what extent
    assumptions on the derivative of curvature and on the
    injectivity radius of the manifold are necessary.
    In the present paper, for $1<p<2$, we give
    a positive answer for the validity of the $L^p$-Calder\'on-Zygmund
    inequality on a Riemannian manifold assuming only a lower
    bound on the Ricci curvature. It is well known that  this  alone is
    not sufficient for $p>2$.
    In this case we complement the study of G\"uneysu-Pigola (2015) and
    derive sufficient geometric criteria for the validity of the
    Calder\'on-Zygmund inequality under additional Kato class bounds
    on the Riemann curvature tensor and the covariant derivative of
    Ricci curvature. Bounds in the Kato class are
    integral conditions and much weaker than pointwise
    bounds. Throughout the proofs, probabilistic tools, like Hessian
    formulas and Bismut type representations for heat semigroups, play
    a significant role.}
\end{abstract}

{\small\tableofcontents}

\section{Introduction\label{s1}}

The Hessian operator $\Hess$, which contains all information of the second order derivatives, is a fundamental object in the second order smooth analysis.
To study the Hessian operator, one usually needs 
the Calder\'{o}n-Zygmund inequality that controls 
$\Hess$ by the simpler Laplacian operator $\Delta$ 
(see \cite{CZ-52, GTbook,NST,GP-15, Pigola} and the references therein for its wide applications particularly in the regularity theory of elliptic equations). 

In the Euclidean space $\mathbb{R}^d$, the Calder\'on-Zygmund inequality says that 
for any $p\in (1,\infty)$ and $u\in C_c^{\infty}(\R^n)$, it holds 
\begin{align}\label{eqn-CZes}
	\|\Hess u\|_{L^p(\R^d)}\leq C\| \Delta u\|_{L^p(\R^d)},
\end{align} 
where $C=C(d,p)>0$ is a constant depending only on $d$ and $p$.
The inequality \eqref{eqn-CZes} is first proved by Calder\'on and Zygmund
via their seminal 
theory of singular integral operators based on the explicit 
representation of the Green kernel of the Laplacian.
A remarkable consequence of inequality \eqref{eqn-CZes} is the fact that,
in the Euclidean space, the Sobolev norms 
\begin{align*}
  \|u\|_{W^{2,p}(\R^d)}&=\|u\|_{L^p(\R^d)}+\|\nabla u\|_{L^p(\R^d)}+\|\Hess u\|_{L^p(\R^d)},\\
  \|u\|_{\widetilde{W}^{2,p}(\R^d)}&=\|u\|_{L^p(\R^d)}+\|\Delta u\|_{L^p(\R^d)}
\end{align*}
are equivalent on $C_c^{\infty}(\R^n)$ (see \cite{Pigola}).

The  Calder\'{o}n-Zygmund inequality \eqref{eqn-CZes}
extends to second order uniformly elliptic operators $L=-\div A \nabla$ with variable coefficients $A$ on domains $\Omega\subset \R^d$ without any boundary conditions (see~e.g.~\cite{GTbook}). In this setting, we have the following local Calder\'on-Zygmund inequality that 
given any domains $\Omega_1\Subset  \Omega$,  $p \in (1,\infty)$ and
$u\in C_c^{\infty}(\Omega)$, it holds
\begin{align}\label{eqn-CZlv}
\|\Hess  u\|_{L^p(\Omega_1)}\leq C\Big(\|u\|_{L^p(\Omega)}+\|Lu\|_{L^p(\Omega)}\Big),
\end{align}
where $C=C(\Omega_1,\Omega,d,p,A)>0$ is a constant
depending on $\Omega_1, \Omega, p,d$ and the elliptic coefficients of $A$. The proof of \eqref{eqn-CZlv} is based on \eqref{eqn-CZes} and a perturbation argument of $A$. In particular, if $A$ is a constant matrix, 
then one can get rid of 
the term $\|u\|_{L^p(\Omega)}$ in \eqref{eqn-CZlv}.

A further step was taken by G\"{u}neysu and Pigola in \cite{GP-15} where they considered the following global Calder\'{o}n-Zygmund inequality 
on Riemannian manifolds $M$ of the form that 
for any $p\in (1,\infty)$ and $u\in C_c^\infty(M)$,
it holds  
$$\|\Hess u\|_{L^p(M)}\leq C_1 \|u\|_{L^p(M)}+C_2\|\Delta u\|_{L^p(M)}, \quad \eqno{{\bf CZ}(p)}$$
where $\Delta$ is the Laplace-Beltrami operator on $M$ and $C_1$,
$C_2$ are two positive constants.  It is known that such an inequality
{\bf CZ($p$)} can hold or fail depending on $p$ and the geometry of
$M$. We give a brief summary of the state of the art on this subject
(see \cite{Pigola} for a more detailed survey, as well as
\cite{GP-2019}).  For an example of a Riemannian manifold of positive
sectional curvature where {\bf CZ($p$)} fails for large values of $p$,
see \cite{Marini-Veronelli-2021}, as well as
\cite{Veronelli_et_al-2021} for a general result in this direction.

To begin with, we make some conventions on the notation.  Throughout
this paper, let $(M,g)$ be a complete non-compact connected
$d$-dimensional Riemannian manifold, $\nabla$ the Levi-Civita
covariant derivative, $\Hess=\nabla d$ the Hessian operator on
functions, and $\mu$ the Riemannian volume measure on~$M$.  We denote
by $|\cdot|$ the norm in the tangent space, and by $\|\cdot\|_p$ the
norm in $L^p(M,\mu)$ for $1\leq p\leq \infty$. The Laplace-Beltrami
operator $\Delta$ acting on functions and forms, is understood as
self-adjoint positive operator on $L^2(\mu)$.

If $p=2$ and there is a bound $\Ric\geq -K$ of the Ricci curvature of
$M$ for some constant $K>0$, then it is well-known that {\bf CZ($2$)}
is a straightforward consequence of Bochner's identity, see the
Appendix. The extension of {\bf CZ($p$)} from $p=2$ to an arbitrary
$p\in (1,\infty)$ is much more involved and an intriguing
problem. Inspired by the proof of \eqref{eqn-CZes}, a possible way to
establish ${\bf CZ}(p)$ is a similar potential theoretical approach that
represents $u$ via the Green kernel of the Laplace-Beltrami
operator~$\Delta$ (see \cite{NST,Wang2004,Ni}). Although
the method works pretty well for many related questions of the
associated Poisson equation, it has the drawback that it applies only
under some restrictive conditions such as compactness of $M$ or nonnegative Ricci
curvature.

To overcome this drawback, G\"{u}neysu and Pigola \cite{GP-15,Pigola}
introduced two methods of proof for {\bf CZ}$(p)$ avoiding the use of
Green kernel. The first method is based on a gluing
procedure that connects the local consideration to the global
result. To be precise,  if $M$ has bounded Ricci curvature and a
strictly positive injectivity radius, then it is proved in
\cite{GP-15} that the Calder\'on-Zygmund
inequality {\bf CZ}$(p)$ holds for any $p\in (1,\infty)$ with
implicit constants depending on $p$, $\|{\Ric}\|_{L^\infty}$, the
dimension $d$  and the injectivity radius. Moreover, if 
$p>\max\{2,~d/2\}$ and  $M$ has bounded sectional curvature, then {\bf CZ}$(p)$
also holds (see \cite[Theorem 5.18]{Pigola}). The second method,
called the functional analytic method, uses boundedness results for
the covariant Riesz transform for $1<p<2$ from \cite{ThW-04}.

Let us sketch the main idea of the second method in \cite{GP-15}.
Inequality {\bf CZ($p$)} is usually reduced to the existence of
positive constants $C$ and $\sigma$ such that
\begin{align*}
  \left\|\,|\Hess (\Delta+\sigma)^{-1}u|\,\right\|_p\leq C\,\|u\|_p,
\end{align*}
which is equivalent to
\begin{align}\label{eqn-equiv1}
  \left\|\,|\nabla (\Delta_1+\sigma)^{-1/2}
  \circ d(\Delta_0+\sigma)^{-1/2}u|\,\right\|_p\leq C\,\|u\|_p.
\end{align}
Here and hereafter, we write $\|\cdot\|_{p}:=\|\cdot\|_{L^p(M)}$ for simplicity.
The problem is thus reduced to the study of conditions for boundedness
of the classical Riesz transform $d(\Delta_0+\sigma)^{-1/2}$ on
functions and boundedness of the covariant Riesz transform
$\nabla (\Delta_1+\sigma)^{-1/2}$ on one-forms.  This approach in
\cite{GP-15} however is restricted to $p\in (1,2)$; the constants
$C_1, C_2$ in {\bf CZ($p$)} depend on dimension $d$, $p$, $\|R\|_{\infty}$,
$\|\nabla R\|_{\infty}$ and on the constants $D, \delta$ from the
following local volume doubling assumption: there are constants $C>0$,
$0\leq \delta < 2$ such that
\begin{align}\label{eqn-GD}
  V(x,tr)\leq Ct^d\e^{Ctr}V(x,r) \tag{\bf LD}
\end{align}
for all $x\in M$, $r>0$ and $t\geq 1$, where $V(x,r):=\mu(B(x,r))$.

Very recently, Baumgarth, Devyver and G\"{u}neysu \cite{BDG-21}
studied the covariant Riesz transform on $j$-forms. Their results can
be applied to {\bf CZ($p$)} when $1<p<2$ requiring the same curvature
conditions as in \cite{GP-15} but without the local volume doubling
assumption made in \cite{GP-15}. This comes from the fact that
\eqref{eqn-GD} already holds if $\Ric\geq -K$ for some $K\geq 0$, as
can be seen by the Bishop-Gromov comparison theorem and the well-known
formula for the volume of balls in hyperbolic space.  However, it
seems difficult to establish {\bf CZ($p$)} for $p>2$ in this way, as
when trying to extend the machinery of \cite{PTTS-2004} to
$L^p$-boundedness of covariant Riesz transform for $p>2$, the local
Poincar\'{e} inequality, used explicitly in \cite{PTTS-2004}, does not
make sense on differential forms.

\smallskip

The observations above raise the following questions:
\begin{enumerate}[1.]
\item\emph{In the case $1<p<2$, is it possible to weaken the
    assumptions on the Riemann curvature tensor $\|R\|_{\infty}$ and
    $\|\nabla R\|_{\infty}$, e.g.,~replacing them by Ricci curvature
    bounds?}
\item\emph{Without a lower control on the injectivity radius, under
    which conditions on the manifold $M$, the inequality {\bf CZ($p$)}
    holds on $M$ for $p>2$?}

\end{enumerate}

In order to answer the above two questions affirmatively, we develop a
new functional analytic method of proof for {\bf CZ}$(p)$ that works
for all $p\in (1,\infty)$. Unlike the second method used in
\cite{GP-15,Pigola} that reduces {\bf CZ($p$)} to the boundedness
\eqref{eqn-equiv1}, our method makes use of the observation that
{\bf CZ($p$)} is equivalent to the $L^p$-boundedness of the following
operator:
\begin{align*}
\Hess(\Delta+\sigma)^{-1}=\int_0^{\infty}\e^{-\sigma t}\Hess P_t\,dt,
\end{align*}
where $P_t$ denotes the heat semigroup generated by $-\Delta$. Based on this observation, our strategy 
to solve the above two questions is first establishing some Hessian heat kernel estimates and then bridging from Hessian heat kernel estimates to {\bf CZ($p$)}. 

However, references on Hessian heat kernel estimates are sparsely to find in the literature,
especially when compared to the situation of heat kernel estimates and
gradient estimates for the heat kernel. Therefore, in the following section, we invest some
effort in deriving sharp Hessian heat kernel estimates first. 
It turns out that there is a big difference
between the treatment of the cases $p<2$ and $p>2$.
In particular, if $p\in (1,2)$, we only assume that $M$ satisfies the following curvature~condition: 
\begin{align}\label{eqn-Ric}
	\Ric\geq -K    \tag{\bf Ric}
\end{align}
for some $K\geq 0$. The curvature condition \eqref{eqn-Ric} then give rise to the local Gaussian upper bounds of the heat kernel and the local volume doubling condition {\bf (LD)} which enable us to deduce a series of 
$L^2$ weighted off-diagonal estimates of the Hessian heat kernel (see Section \ref{s2.1} below). These estimates
together with the classical argument of Calder\'on-Zygmund decomposition (see \cite{TX-99}) allow to derive our first main result.

\begin{theorem}\label{them-1}
	Let $(M,g)$ be a complete Riemannian manifold satisfying
	\eqref{eqn-Ric}. Let $1<p<2$ be fixed. Then there exists a constant
	$\sigma>0$ such that the operator $\Hess (\Delta+\sigma)^{-1}$ is
	bounded in $L^p$, i.e.~{\bf CZ($p$)} holds.
\end{theorem}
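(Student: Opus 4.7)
The plan is to exploit the integral representation
\[
\Hess(\Delta+\sigma)^{-1}=\int_0^{\infty}\e^{-\sigma t}\,\Hess P_t\,dt
\]
and treat $T:=\Hess(\Delta+\sigma)^{-1}$ as a vector-valued singular integral operator associated with the heat semigroup. The case $p=2$ is a consequence of the Bochner--Weitzenb\"ock identity under $\Ric\ge -K$ (Appendix), so the task is to upgrade from $L^2$ to $L^p$ for $1<p<2$. The natural route is to establish the weak-type $(1,1)$ endpoint for $T$ and interpolate with $L^2$, via the Calder\'on--Zygmund decomposition machinery on doubling metric measure spaces. Under $\Ric\ge -K$, this is accessible because Bishop--Gromov yields the local doubling condition {\bf (LD)}, and standard Li--Yau--type arguments yield local Gaussian upper bounds for $p_t(x,y)$; both of these are already noted in the excerpt.

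Next, given $f\in L^1\cap L^2$ and $\lambda>0$, one performs the Calder\'on--Zygmund decomposition $f=g+\sum_j b_j$ at level $\lambda$, with each $b_j$ supported in a ball $B_j=B(x_j,r_j)$, mean zero on $B_j$, $\|b_j\|_1\ls\lambda\mu(B_j)$, and $\sum_j\mu(B_j)\ls\lambda^{-1}\|f\|_1$. The good part $g$ is handled by the $L^2$-boundedness of $T$ together with $\|g\|_2^2\ls\lambda\|f\|_1$ and Chebyshev. For the bad part one writes, for each $j$,
\[
Tb_j=T^{(j)}_{\rm loc}b_j+T^{(j)}_{\rm far}b_j,\qquad T^{(j)}_{\rm loc}:=\int_0^{r_j^2}\e^{-\sigma t}\Hess P_t\,dt,
\]
and controls $|\{|Tb|>\lambda\}|$ ball by ball, making use of the cancellation $\int b_j=0$ for the far piece (via a standard Taylor-type argument that trades $\int b_j=0$ for a spatial derivative of the kernel of $\Hess P_t$).

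The heart of the argument is the control of both pieces outside a fixed enlargement $CB_j$ by the Hessian $L^2$-weighted off-diagonal (Davies--Gaffney type) estimates established in Section~\ref{s2.1}, schematically
\[
\bigl\|\mathbf{1}_{A_k(B_j)}\Hess P_t(\mathbf{1}_{B_j}h)\bigr\|_2\le \frac{C}{\sqrt{t}}\,\e^{-c\,4^k r_j^2/t}\,\e^{Ct}\,\|h\|_2,
\]
where $A_k(B_j)=2^{k+1}B_j\setminus 2^k B_j$. Integrating these against $\e^{-\sigma t}\,dt$, summing the geometric series over the annuli $A_k(B_j)$, and using local doubling to compare $\mu(A_k(B_j))$ with $\mu(B_j)$, yields a summable bound of the form $\|Tb_j\|_{L^1(M\setminus CB_j)}\ls\|b_j\|_1$. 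Summing over $j$ and combining with the good-part estimate furnishes $|\{|Tf|>\lambda\}|\ls\lambda^{-1}\|f\|_1$, and Marcinkiewicz interpolation with $L^2$ delivers {\bf CZ$(p)$} for every $1<p<2$.

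The main obstacle, and the reason a positive shift $\sigma>0$ is indispensable, is the unavoidable exponential factor $\e^{Ct}$ appearing in the above Hessian off-diagonal estimates: with only a one-sided Ricci bound the Weitzenb\"ock term on $1$-forms (and the curvature coupling inherent to any Bismut-type representation of $\Hess P_t$) produces exponential growth in time, so any hope of integrability in $t$ requires $\sigma$ strictly larger than this rate in order to absorb $\e^{Ct}$. Thus the real technical labour lies not in the Calder\'on--Zygmund decomposition \cite{TX-99}, which proceeds along classical lines once (LD) and the Gaussian upper bound are available, but in the derivation of the sharp Hessian off-diagonal estimates of Section~\ref{s2.1}, which in turn relies on the Bismut-type Hessian formula for $\Hess P_t$.
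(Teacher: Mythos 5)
Your proposal captures the broad strategy -- interpolation from $L^2$ via weak type $(1,1)$, a Calder\'on--Zygmund decomposition, and the Hessian off-diagonal estimates of Section~\ref{s2.1} -- and your observation that $\sigma>0$ must dominate the exponential rate of the curvature term is right. However, there are two structural gaps. First, you perform a \emph{global} Calder\'on--Zygmund decomposition on $M$. Under \eqref{eqn-Ric} alone, the volume condition \eqref{eqn-GD} carries the multiplicative factor $\e^{Ctr}$, so $(M,\mu,\rho)$ is not a globally doubling space and Lemma~\ref{lem-czdecom} does not apply to it; if the decomposition radii $r_j$ are unbounded, the comparison of $\mu(2^{k+1}B_j)$ with $\mu(B_j)$ over annuli fails, and the $\e^{C''s}$ factors in Corollary~\ref{prop1} are no longer absorbed uniformly. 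The paper therefore first localizes spatially (Lemmas~\ref{lem-fop} and \ref{lem-bcT}): a finite-overlap covering by balls $B(x_j,1)$ and a partition of unity reduce the global weak $(1,1)$ inequality to a local one on $2B(x_j,1)$ for data supported in $B(x_j,1)$; inside each such ball the relative doubling property \eqref{eqn-D} holds with a uniform constant, so Lemma~\ref{lem-czdecom} can be applied with all radii bounded. This localization step has no counterpart in your write-up and is essential.

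Second, although your decomposition $T=T^{(j)}_{\rm loc}+T^{(j)}_{\rm far}$ is in fact identical to the paper's $T=T(I-P^{\sigma}_{t_j})+TP^{\sigma}_{t_j}$ (by $T(I-P^{\sigma}_{t_j})=\int_0^{t_j}\e^{-\sigma s}\Hess P_s\,ds$), the way you close the far-piece estimate is not supported by the paper's toolbox. Exploiting $\int b_j=0$ via a Taylor expansion of the kernel would require bounds on $\nabla_y\Hess_x p_t(x,y)$, a third-order derivative of the heat kernel, which is established nowhere in Section~\ref{s2.1} and whose availability under a bare Ricci lower bound is unclear; and without the cancellation, a direct $L^1$ bound on $\int_{r_j^2}^\infty\e^{-\sigma t}\Hess_x p_t(\cdot,y)\,dt$ over $\{\rho(x,y)\gtrsim r_j\}$ carries a $\log(1/r_j)$ divergence as $r_j\to0$, so it does not close either. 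The paper deliberately avoids the cancellation altogether: the far piece $TP^{\sigma}_{t_j}b_j$ is handled by the $L^2$-boundedness of $T$ composed with the regularization $P^{\sigma}_{t_j}$, via Lemma~\ref{lem-le}, which controls $\bigl\|\sum_j P^{\sigma}_{t_j}b_j\bigr\|_2^2\lesssim\lambda\|f\|_1$ through a Gaussian/maximal-function argument. This is the Duong--McIntosh type substitute for H\"ormander kernel regularity and is precisely what makes the argument work under \eqref{eqn-Ric} alone; replacing it with a mean-zero/Taylor argument changes the nature of the required estimates.
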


Theorem \ref{them-1} answers question $1$ affirmatively. Comparing Theorem \ref{them-1} with existing results on {\bf CZ($p$)},
it should be pointed out first that the result is valid without any
injectivity radius assumptions and secondly, instead of boundedness of
$\|R\|_{\infty}$ and $\|\nabla R\|_{\infty}$, only a lower bound of
$\Ric$ is needed.  In view of the argument for $p=2$, the assumption \eqref{eqn-Ric} seems close to 
sharp.

%
%
%
%

%

%
%

For the case $p>2$,  it is well known
\cite{Pigola,Marini-Veronelli-2021,Veronelli_et_al-2021} that the curvature condition \eqref{eqn-Ric}
alone is not enough for {\bf CZ($p$)} to hold.  
Hence more geometric information about the manifold is
required in this case for the validity of {\bf CZ($p$)}.

To formulate appropriate geometric conditions, we introduce some  probabilistic quantities.
Denote by $X_t$ the diffusion process generated by $-\Delta$, which is
assumed to be non-explosive ($M$ is stochastically complete). Then,
for $f\in \mathcal{B}_b(M)$,
\begin{align*}
  \E^x[f(X_t)]=P_tf(x)=(\e^{-\Delta t}f)(x).
\end{align*}
A Borel function $K\colon M\rightarrow \R$ is said to be in the {\it Kato
class} $\mathcal{K}(M)$ of $M$, if
\begin{align}\label{eqn-Katoc}
\lim_{t\rightarrow 0+}\sup_{x\in M}\int_0^t \E^x\big[|K(X_s)|\big]\,ds=0. \tag{{\bf K}}
\end{align}
Obviously, $\mathcal{K}(M)$ is a linear space and
$\mathcal{K}(M)\subset L_{\rm loc}^1(M)$.
The Kato class has been introduced in \cite{Kato} on Euclidean space and then applied 
to investigate singular potentials, see for instance
\cite{AS-82,Simon-84,Gu-12,rose_stollmann_2020}.  
Concerning criteria for functions to be in the Kato class the reader
may consult \cite{Guen-2017,Gueneysu-book-2017, GP13}.  Note that for dimension $d\geq 2$, if \eqref{eqn-Ric} holds, then the heat kernel has a local on-diagonal estimate which implies $L^p(M)+L^{\infty}(M)\subset \mathcal{K}(M)$ for $p>d/2$ by \cite[Proposition 3.2]{GP13}.

To deal with the case $p>2$, 
our conditions are 
given in terms of Kato bounds on the  geometric quantities.

\smallskip 

\noindent {\bf Condition (H) } \emph{ $\Ric\geq -K$ for some $K\geq 0$
	and there exist $K_1,K_2\in \mathcal{K}(M)$ such that
	\begin{align}\label{eqn-H}
		|R|^2(x)\leq K_1(x)\quad\text{and}\quad  \big|\nabla \Ric^{\sharp}+d^*R\big|^2(x)\leq K_2(x),\quad
		x\in M, \tag{\bf H} 
\end{align}
where  for $x\in M$ and $ v_1,v_2,v_3\in T_xM$,
\begin{align*}
	&|R|(x)=\sup \Big\{|R^{\#,\#}(v_1,v_2)|_\HS(x)\colon\ v_1,v_2\in T_xM,\ |v_1|\leq 1,  \ |v_2|\leq 1 \Big\},
\end{align*}
and
\begin{align*}
	& \langle d^*R(v_1,v_2), v_3 \rangle=\langle (\nabla_{v_3}\Ric^{\sharp})(v_1), v_2 \rangle-
	\langle (\nabla_{v_2}\Ric^{\sharp})(v_3), v_1 \rangle,
\end{align*}
 with $R^{\#,\#}(v_1,v_2)=R(\cdot,v_1,v_2,\cdot)$, the  curvature tensor $R$ and $\Ric^{\sharp}(v)=\Ric(\cdot, v)^\sharp$ for $v\in T_xM$.}

\medskip

Under the condition~${\bf (H)}$, we are able to prove the 
following key pointwise inequalities for the Hessian of the semigroup that for any $t>0$,
$f\in C_c^\infty(M)$ and $x\in M$, it holds 
\begin{align}\label{eqn-pihe}
	|\Hess P_tf|(x)\leq \e^{2Kt}P_t|\Hess f|(x) + C\e^{(2K+\theta)t}(P_{t}|\nabla f|^2)^{1/2}(x),
\end{align}
and
\begin{align}\label{eqn-Bismut-He}
 &t\,|\Hess P_tf|(x)\leq C(1+\sqrt{t})\,\e^{(2K+\theta)t}\lf(P_t|f|^2\r)^{1/2}(x)
\end{align}
for some constants $C,\theta>0$ (see Propositions \ref{lem1} and
\ref{lem5} below).  Both of them are proved by using some
probabilistic tools established in
\cite{EL94,EL-98,Li,Stroock,APT-03}. More precisely, inequality
\eqref{eqn-pihe} is proved by a stochastic approach based on a second
order derivative formula for the heat semigroup (see
\eqref{Hessian-formula} below); inequality \eqref{eqn-Bismut-He} is
established by means of Bismut-type representation formulas for the
Hessian of heat semigroups which were first proved by Elworthy and Li
(see \cite{EL94,EL-98}).

The inequalities \eqref{eqn-pihe} and \eqref{eqn-Bismut-He} enable us to
establish a series of pointwise Hessian heat kernel estimates.
Based on these results, we can give an affirmative answer to the question $2$ above.

\begin{theorem}\label{theorem-2}
	Let $M$ be a complete Riemannian manifold satisfying
	\eqref{eqn-H}. Let $p>2$ be fixed. Then there exists a constant
	$\sigma>0$ such that the operator $\Hess\,(\Delta+\sigma)^{-1}$ is
	bounded in $L^p$, i.e.~\text{\bf CZ}$(p)$ holds.
\end{theorem}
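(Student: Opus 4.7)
The plan is to establish $L^p$-boundedness of $T:=\Hess(\Delta+\sigma)^{-1}$ for some sufficiently large $\sigma>0$ and any fixed $p>2$; this boundedness immediately yields \textbf{CZ}$(p)$ by taking $f=(\Delta+\sigma)u$ for $u\in C_c^\infty(M)$ and absorbing the extra $\sigma$ into a $\|u\|_p$ term. The starting point is the representation
\begin{align*}
Tf=\int_0^\infty \e^{-\sigma t}\,\Hess P_t f\,dt,
\end{align*}
so the whole question reduces to quantitative control of $\Hess P_t$: an integrable-in-$t$ singularity as $t\to 0^+$ and exponential decay for large~$t$ (to be matched against the factor $\e^{-\sigma t}$ once $\sigma$ is chosen larger than the exponential growth rate in \eqref{eqn-pihe} and \eqref{eqn-Bismut-He}).

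\textbf{Step 1 (Hessian heat kernel estimates).} Applying the Bismut-type bound \eqref{eqn-Bismut-He} to $P_{t/2}g$ and using the semigroup property $P_t=P_{t/2}P_{t/2}$ yields
\begin{align*}
t\,|\Hess P_t g|(x)\le C(1+\sqrt{t/2})\,\e^{(2K+\theta)t/2}\,\bigl(P_{t/2}|P_{t/2}g|^2\bigr)^{1/2}(x).
\end{align*}
Combined with the local Gaussian upper bound on the heat kernel guaranteed by \eqref{eqn-Ric}, this can be promoted to a pointwise Hessian kernel estimate of Gaussian type, together with the corresponding $L^2$ weighted off-diagonal (Davies-Gaffney type) estimates for the operator $\Hess P_t$. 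The Kato class hypotheses in \eqref{eqn-H} enter here through Khasminskii-type inequalities used to bound the stochastic exponentials of $K_1$ and $K_2$ along the Brownian paths appearing in the Bismut-Elworthy-Li representations behind \eqref{eqn-pihe} and \eqref{eqn-Bismut-He}; it is these inequalities that keep the constant $\theta$ finite.

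\textbf{Step 2 (From heat kernel estimates to $L^p$-boundedness).} The $L^2$-boundedness of $T$ is already available under \eqref{eqn-Ric} via Bochner's identity (as recalled in the Appendix). To extrapolate from $L^2$ to $p>2$, I would follow a Coulhon-Duong-type sharp function strategy and establish the pointwise inequality
\begin{align*}
(Tf)^\#(x)\le C\,[M(|f|^2)(x)]^{1/2},
\end{align*}
where $(\cdot)^\#$ denotes the Fefferman-Stein sharp function and $M$ the Hardy-Littlewood maximal operator on $(M,\mu)$. The local doubling property \textbf{(LD)} guaranteed by \eqref{eqn-Ric} makes $M$ strong $(q,q)$ for every $q>1$ and supports a localized Fefferman-Stein inequality $\|h\|_p\le C\|h^\#\|_p$. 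Combining these yields, for every $p>2$,
\begin{align*}
\|Tf\|_p\le C\,\|(Tf)^\#\|_p\le C\,\|M(|f|^2)\|_{p/2}^{1/2}\le C\,\|f\|_p.
\end{align*}
An equivalent formulation via a good-$\lambda$ inequality relative to $Mf$ (as suggested by the paper's keywords) gives the same conclusion after integration in $\lambda$.

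\textbf{Main obstacle.} The central technical point is extracting genuine pointwise (or weighted $L^2$ off-diagonal) Hessian kernel estimates from \eqref{eqn-pihe}-\eqref{eqn-Bismut-He} under the \emph{integrated} Kato class hypotheses in \eqref{eqn-H} rather than uniform bounds on $R$ and $\nabla R$. The exponential factors in the Bismut formulas are stochastic exponentials of $K_1$ and $K_2$ evaluated along Brownian trajectories, and bounding their expectations uniformly in the starting point requires a delicate Khasminskii/Feynman-Kac analysis. Once Step~1 is in place, the sharp-function (or good-$\lambda$) argument in Step~2 is relatively standard, but verifying that all implicit constants are genuinely independent of $\lambda$ and $f$, and that the singularity of $\Hess P_t$ as $t\to 0^+$ is integrable after the decomposition $P_t=P_{t/2}P_{t/2}$, still requires care.
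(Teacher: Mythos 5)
Your Step 1 is on the right track and matches the paper's route: the Bismut-type estimate \eqref{eqn-Bismut-He} applied to $P_{t/2}g$, combined with the Gaussian upper bound from \eqref{eqn-Ric}, does yield pointwise Hessian kernel bounds (Proposition \ref{lem-hess-heat-kernel}) and $L^2$-Gaffney off-diagonal estimates for $t\,\Hess P_t$ (Lemma \ref{lem-Gaffney}), and the Khasminskii-type exponential bound (\cite[Lemma 3.9]{GP-15b}, used in Lemma \ref{lem-Hc}) is indeed what keeps $\theta$ finite.

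The genuine gap is in Step 2, which you treat as ``relatively standard.'' The claimed sharp-function estimate $(Tf)^\#(x)\le C\,[\SM(|f|^2)(x)]^{1/2}$ is not a consequence of the size bounds you derive in Step 1. A direct Coifman--Fefferman-type argument would require either H\"older regularity of the kernel of $T$ in the first variable (i.e.~control of $\nabla_x\Hess_x p_t(x,y)$, a third derivative of the heat kernel, which is nowhere available under \eqref{eqn-H}) or the domination $|\Hess P_t f|\le C\,P_{ct}|\Hess f|$ analogous to \eqref{Pt-one}, which the paper explicitly notes fails outside of flat manifolds. The ACDH/Shen framework that the paper uses (Lemma \ref{theorem-add-2}, imported from \cite[Theorem 2.4]{PTTS-2004}) replaces kernel regularity by \emph{two} conditions: (i) an $L^2$ control of the modified sharp maximal function built from $\wz T(I-P_{r^2})^n$ (not the classical Fefferman--Stein sharp function that compares $Tf$ against its average), and (ii) a \emph{higher integrability} estimate for $\wz T P_{kr^2}f$, namely that its local $L^p$ average is controlled by $\SM_{4B_j}(|\wz Tf|^2)$ plus a manageable error $S_jf$. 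Your proposal establishes neither (ii) nor the modified form of (i). Condition (ii) is precisely the place where the pointwise inequality \eqref{eqn-pihe}/\eqref{hess-gra},
\begin{align*}
|\Hess P_tf|\leq \e^{2Kt}\big(P_t|\Hess f|^2\big)^{1/2}+C\e^{(2K+\theta)t}\big(P_t|df|^2\big)^{1/2},
\end{align*}
has to be used as a surrogate for the local Poincar\'e inequality that would be available in the Riesz-transform setting but has no Hessian analogue; the extra gradient term then has to be absorbed into the error operator $S_j$, which in turn requires the $L^p$-boundedness of $\sqrt t\,\nabla P_t$. None of this is ``relatively standard,'' and it is where the actual work of the theorem lies. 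In fact you identify the main obstacle in the wrong place: bounding the stochastic exponentials under Kato-class hypotheses (your ``main obstacle'') is a one-line citation of \cite{GP-15b}, whereas the verification of condition (ii) in Lemmas \ref{lem-small-radii} and the proof of Theorem \ref{theorem-2} is the genuinely new content.

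A secondary, less severe, omission is the reduction step: because only local doubling \eqref{eqn-GD} is available, the paper first truncates in time via the cutoff $v(t)=w(t)\e^{-\sigma t}$ to pass from $T$ to $\wz T$, and then localizes in space via the finite-overlap cover of Lemma \ref{lem-fop}; the ACDH criterion is then applied \emph{separately on each $4B_j$} with constants uniform in $j$. Your ``localized Fefferman--Stein inequality'' leaves all of this implicit, but it would need to be carried out explicitly for the argument to close.
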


Theorem \ref{theorem-2} gives in particular an answer to the open
question in \cite{GP-15} about sufficient conditions for {\bf CZ($p$)}
when $p>2$ in the absence of control of the injectivity radius. It is
worth mentioning that compared with the sufficient conditions even in
the case $1<p<2$ (\cite[Theorem~D]{GP-15},
\cite[Corollary~1.8]{BDG-21}), in Theorem \ref{theorem-2} the
geometric quantities $|R|$ and $|\nabla \Ric|$ do not need to be
uniformly bounded on $M$; it is sufficient to have bounds in the Kato
class which is a kind of integral condition. 
It should be mentioned that for $p>d/2$ in conjunction with
$p\geq2$ the validity of \text{\bf CZ}$(p)$ has been obtained under
strong curvature assumption, namely uniform boundedness of the
sectional curvature tensor, but without conditions involving the
derivative of curvature, see Theorem 5.18 in the survey of
Pigola~\cite{Pigola}.

As indicated above, the pointwise inequalities \eqref{eqn-pihe} and
\eqref{eqn-Bismut-He} play a key role in the proof of Theorem
\ref{theorem-2}, which also reflects the difference between our
approach and that of classical local Riesz transform (see
\cite{CD-01,CD-03, PTTS-2004}).  In the latter case, the following
domination property that for some positive constants $c_1,c_2$ and $C$
\begin{align}\label{Pt-one}
  |\nabla P_tf|\leq C\e^{c_1t}P_{c_2t}|\nabla f|
\end{align}
is indispensable for boundedness of the
Riesz transform in case $p>2$ (see \cite{CD-01,CD-03}).
Unfortunately, the machinery of \cite{CD-01} cannot work well in our situation since a
Hessian estimate of the type
\begin{align*}
  |\Hess P_tf|\leq C\e^{c_1t}P_{c_2t}|\Hess f|
\end{align*}
would be required which however only holds in very specific cases (like flat
manifolds). On the other hand, if we aim at using the techniques from
\cite{PTTS-2004} directly, the main difficulty to deal with is that there is no
suitable Hessian replacement of the local Poincar\'e inequality which is
heavily used throughout their proof.

To overcome these two obstacles, we take advantage of the techniques
from \cite{PTTS-2004}, in particular, the sharp maximal function and
good-$\lambda$ inequalities. By means of these tools and inequality
\eqref{eqn-pihe}, we observe that if the additional term involving
$(P_t|\nabla f|^2)^{1/2}$ is treated and controlled as an error term, then
boundedness of $\Hess (\Delta+\sigma)$ in $L^p$ can be
established also for $p>2$ with the help of pointwise Hessian estimates
of the heat kernel.  In conclusion, the crucial observation is that the pointwise inequality
\eqref{eqn-pihe} may serve as a Hessian replacement of
\eqref{Pt-one} circumventing the non-availability
of the local Poincar\'e inequality.

The rest of the paper is organized as follows. In Section 2, we give
various forms of estimates on the Hessian of the heat kernel and on
the corresponding semigroups, which are used throughout the proof of
{\bf CZ($p$)}. In Sections 3 finally, we present proofs for Theorem
\ref{them-1} and \ref{theorem-2} respectively. A suitable version of
the localization techniques of \cite{DR-96} is included.

\medskip{\bf Acknowledgements} The authors are indebted to Batu
G\"uneysu, Stefano Pigola and Giona Veronelli for very helpful
comments on the first draft of this paper.

\section{Hessian heat kernel  estimates }
This section is divided into two parts: the first part is on
$L^2$-estimates of the Hessian of heat kernels under the assumption
of a lower Ricci curvature bound; the second part is on the estimates
derived under condition {\bf (H)} from the Hessian formula and Bismut
type formulas for the Hessian of semigroups, which are established by
means of the techniques from stochastic analysis.

\subsection{$L^2$-estimates for the Hessian of heat kernel}\label{s2.1}

In this section, we assume Ricci curvature to be bounded below,
i.e.~validity of condition {(\bf Ric)}.  Then, in particular, the
local doubling assumption \eqref{eqn-GD} with respect to $\mu$ holds.
Then for $x$ and $y\in M$, we obviously have
$B(y,\!\sqrt{t})\subset B(x,\sqrt{t}+\rho(x,y))$. Thus if
\eqref{eqn-GD} holds, then
\begin{align}\label{eqn-gdxy}
  V(y,\!\sqrt{t})\leq V\Big(x,\sqrt{t}+\rho(x,y)\Big)
  \leq C\Big(1+\frac{\rho(x,y)}{\sqrt{t}}\Big)^{d}\exp\left(C(\sqrt{t}
  +\rho(x,y))\right)V(x,\sqrt{t}).
\end{align}

It is well-known that, under a lower Ricci curvature bound, the heat
kernel allows an off-diagonal estimate \cite{Davies93}.  The following
lemma gives a pointwise off-diagonal estimate for the heat kernel and
its time derivative.
\begin{lemma}\label{lem-Gausspe}
  Assume that \eqref{eqn-Ric} holds.  Then for any
  $\az\in (0,\frac{1}{4})$, there exist constants $C$ and $C_1>0$
  depending on the dimension $d$ and $\alpha$ such that for all
  $x,y\in M$ and $t>0$,
  \begin{align}\label{heat-kernel-upper-bound}
    p_t(x,y)+\left|\frac{\partial p_t}{\partial t}(x,y)\right| \leq \frac{C}{V(y,\!\sqrt{t})}
    \exp\left(-\alpha \frac{\rho^2(x,y)}{t}+C_1 Kt\right).
  \end{align}
\end{lemma}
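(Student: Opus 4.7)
The plan is to treat the two terms separately, deriving the bound on $p_t(x,y)$ from the classical Gaussian upper estimate under a lower Ricci bound, and then transferring it to $\partial_t p_t$ via the analyticity of the heat semigroup.

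For $p_t$, I would start from Davies' two-sided symmetric upper bound \cite{Davies93}: under \eqref{eqn-Ric}, for any $\alpha'\in (0,1/4)$ there exist constants $C(\alpha'),C_1(\alpha')>0$ such that
\begin{align*}
p_t(x,y)\leq \frac{C(\alpha')}{\sqrt{V(x,\!\sqrt t)\,V(y,\!\sqrt t)}}\,\exp\Big(-\alpha'\frac{\rho^2(x,y)}{t}+C_1(\alpha')Kt\Big).
\end{align*}
The local doubling inequality \eqref{eqn-gdxy}, which follows from Bishop-Gromov under~\eqref{eqn-Ric}, allows me to replace $V(x,\!\sqrt t)$ by $V(y,\!\sqrt t)$ at the cost of a polynomial factor in $\rho(x,y)/\sqrt t$ and an extra factor $\e^{C(\sqrt t+\rho(x,y))}$. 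The polynomial is dominated by $\e^{\varepsilon\rho^2/t}$ for any $\varepsilon>0$, while $\e^{C\rho(x,y)}$ is controlled by the elementary inequality $C\rho(x,y)\leq \varepsilon\rho^2/t+(C^2/4\varepsilon)t$, and $\e^{C\sqrt t}$ is absorbed into the constant $C_1 t$ term. Choosing $\alpha<\alpha'-2\varepsilon$ and enlarging $C_1$ accordingly recovers the desired one-sided bound for $p_t$.

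For $|\partial_t p_t|$, the starting point is that $\Delta$ is self-adjoint and nonnegative on $L^2(\mu)$, so $P_t=\e^{-t\Delta}$ extends to a bounded holomorphic semigroup on the open right half-plane. Davies' perturbation argument, which uses only the quadratic form inequality $\ce(\e^{\psi}f,\e^{-\psi}g)\geq -\|\nabla\psi\|_\infty^2\|f\|_2\|g\|_2$ together with the Ricci lower bound, extends essentially verbatim to complex times $z$ in a fixed sector $\{|\arg z|\leq \theta_0\}$; this yields
\begin{align*}
|p_z(x,y)|\leq \frac{C}{V(y,\!\sqrt{\mathrm{Re}\,z})}\,\exp\Big(-\alpha'\frac{\rho^2(x,y)}{\mathrm{Re}\,z}+C_1 K|z|\Big).
\end{align*}
Cauchy's integral formula applied on the parabolic circle $|z-t|=t/2$ then gives
\begin{align*}
\left|\frac{\partial p_t}{\partial t}(x,y)\right|=\left|\frac{1}{2\pi\mathrm{i}}\oint_{|z-t|=t/2}\frac{p_z(x,y)}{(z-t)^2}\,dz\right|\leq \frac{2}{t}\sup_{|z-t|=t/2}|p_z(x,y)|.
\end{align*}
Since $V(y,\!\sqrt{\mathrm{Re}\,z})$ is comparable to $V(y,\!\sqrt t)$ on this circle by \eqref{eqn-gdxy}, the right-hand side is bounded by a constant multiple of the desired target, the residual factor $1/t$ being absorbed into the constants at the price of a further infinitesimal shrinkage of $\alpha$. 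Combining the two bounds yields the claim.

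The main obstacle is the justification of the complex-time Gaussian estimate for $|p_z|$ with the correct asymmetric volume normalization $V(y,\!\sqrt{\mathrm{Re}\,z})^{-1}$. Davies' original multiplier method is stated for real times, but combined with Dunford's holomorphic functional calculus on a fixed sector the $L^2$-computations extend to non-real times with constants depending only on the sector angle, which is fixed a priori; the conversion between $V(x,\cdot)$ and $V(y,\cdot)$ is then handled exactly as in the real-time case via \eqref{eqn-gdxy}.
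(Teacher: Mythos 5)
Your approach matches the paper's exactly: cite Davies' Gaussian upper bound under a lower Ricci bound, convert the volume normalization with \eqref{eqn-gdxy}, and obtain the time-derivative bound via analyticity of the semigroup and a Cauchy integral on a parabolic circle (the paper just cites \cite{Davies97} or \cite{Gr97} for this step). The conversion argument for $p_t$ and the justification sketch for complex-time Gaussian bounds are both sound.

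There is however a genuine error in the final absorption step, and it is worth flagging because it points to a problem in the statement itself. You write that ``the residual factor $1/t$ [is] absorbed into the constants at the price of a further infinitesimal shrinkage of $\alpha$,'' but shrinking $\alpha$ trades $1/t$ against $\exp(\varepsilon\rho^2/t)$, which is useless near the diagonal: for $x=y$ and $t\to 0$ the Cauchy bound gives $|\partial_t p_t(x,x)|\lesssim t^{-1}V(x,\sqrt t)^{-1}$, and this order of magnitude is sharp already in $\mathbb{R}^d$ where $|\partial_t p_t(x,x)|=\tfrac d{2t}(4\pi t)^{-d/2}\sim t^{-d/2-1}$, whereas the right-hand side of \eqref{heat-kernel-upper-bound} is only $\sim t^{-d/2}$. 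So the inequality as printed cannot hold. The estimate your argument actually delivers, and the one that is correct (cf.\ \cite[Corollary 3.3]{Gr97}, which has the $t^{-m}$ prefactor), is
\begin{align*}
\left|\frac{\partial p_t}{\partial t}(x,y)\right|\leq\frac{C}{t\,V(y,\!\sqrt t)}\exp\Big(-\alpha\frac{\rho^2(x,y)}{t}+C_1Kt\Big),
\end{align*}
and this is also what the paper implicitly uses: in Lemma~\ref{lem4} the time derivative enters as $s^2|\Delta_x p_s(x,y)|^2=s^2|\partial_s p_s(x,y)|^2$, i.e.\ with the compensating factor $s^2$ already in place. In short, your proof is correct up to and including the Cauchy estimate; the ``absorption'' sentence should be deleted and the missing $t^{-1}$ should be kept in the final bound, which corrects the statement of the lemma rather than matching it.
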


\begin{proof}
  The estimate of $p_t(x,y)$ is an easy consequence of
  \eqref{eqn-gdxy} and \cite[Theorem 2]{Davies93} or \cite[Theorem
  2.4.4]{Wbook14}, where it is proved that for all $x,y\in M$ and
  $t>0$,
  \begin{align}\label{heat-kernel-upper-bound-2}
    p_t(x,y)\leq \frac{C}{V(y,\!\sqrt{t})}
    \exp\lf(-\alpha \frac{\rho^2(x,y)}{t}+C_1 Kt\r).
  \end{align}
  The estimate for $|\frac{\partial p_t}{\partial t}(x,y)|$ follows
  from that of $p_t(x,y)$ and the analytic property of the semigroup
  (see \cite[Theorem 4]{Davies97} or \cite[Corollary 3.3]{Gr97}).
\end{proof}

The following lemma gives weighted $L^2$-integral estimates for the
heat kernel, its gradient and its Laplacian.

\begin{lemma}\label{lem4}
  Assume that \eqref{eqn-Ric} holds.  Let $\az\in (0,\frac{1}{4})$ be
  as in Lemma \textup{\ref{lem-Gausspe}}.  For all
  $\gamma \in (0,2\alpha)$, $s>0$ and $y\in M$,
  \begin{align*}
    \int_M \lf[|p_s(x,y)|^2+s|\nabla _x p_s(x,y)|^2+s^2|\Delta_x p_s(x,y)|^2\r]
    \e^{\gamma\, \frac{\rho^2(x,y)}{s}}\,\mu(dx)
    \leq \frac{C_{\gamma}}{V(y,\!\sqrt{s})}\,\e^{2\,C's},
  \end{align*}
  where $C_{\gamma}>0$ depends on $\gamma$ and $C'>0$ on $\az$ and
  $K$.
 	
\end{lemma}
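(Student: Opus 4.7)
The plan is to split the integrand into its three constituents and handle them by two distinct mechanisms: the $|p_s|^2$ and $s^2|\Delta_x p_s|^2$ pieces fall to the pointwise Gaussian bounds of Lemma~\ref{lem-Gausspe} combined with an annular decomposition through \eqref{eqn-gdxy}, while the $s|\nabla_x p_s|^2$ piece is reduced to these two via an integration by parts with the weight $e^{\gamma \rho^2/s}$.

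For the first and third terms, I would use that $\Delta_x p_s(x,y) = -\partial_s p_s(x,y)$, since $\Delta$ is the positive self-adjoint generator, so that Lemma~\ref{lem-Gausspe} provides
\begin{align*}
p_s(x,y)^2 + |\Delta_x p_s(x,y)|^2 \leq \frac{2C^2}{V(y,\sqrt{s})^2}\,\exp\!\left(-2\alpha\,\frac{\rho^2(x,y)}{s} + 2C_1 K s\right).
\end{align*}
Multiplying by $e^{\gamma \rho^2/s}$, where $\beta := 2\alpha - \gamma > 0$, reduces the task to estimating $\int_M e^{-\beta \rho^2(x,y)/s}\,\mu(dx)$. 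I would decompose $M$ into annuli $A_k = \{k\sqrt{s} \leq \rho(\cdot,y) < (k+1)\sqrt{s}\}$ and use \eqref{eqn-gdxy} to get $\mu(A_k) \leq C(k+1)^d e^{C(k+1)\sqrt{s}}V(y,\sqrt{s})$. Young's inequality $C(k+1)\sqrt{s} \leq \tfrac{\beta}{2}k^2 + C^2 s/\beta + C\sqrt{s}$ splits the $k$-growth so that half of the Gaussian tail still forces convergence of the series over $k$, leaving a bound of the form $V(y,\sqrt{s})\,e^{C''s}$. The prefactor $s^2$ in front of the Laplacian contribution is then absorbed into the exponential via $s^2 \leq C_0\, e^s$.

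For the gradient term, set $u = p_s(\cdot,y)$ and $\psi = e^{\gamma \rho^2(\cdot,y)/s}$. A standard cut-off argument, justified by the Gaussian decay of $u^2\psi$, yields
\begin{align*}
\int_M |\nabla u|^2\psi\,d\mu = \int_M u\,\Delta u\,\psi\,d\mu - \int_M u\,\langle\nabla u,\nabla\psi\rangle\,d\mu.
\end{align*}
Cauchy--Schwarz on the first integral and the absorbing inequality $|ab| \leq \tfrac{1}{2}a^2 + \tfrac{1}{2}b^2$ applied with $a = |\nabla u|\psi^{1/2}$, $b = |u||\nabla\psi|\psi^{-1/2}$ on the second, deliver
\begin{align*}
\int_M |\nabla u|^2 \psi\,d\mu \leq 2\biggl(\int_M|\Delta u|^2\psi\,d\mu\biggr)^{1/2}\biggl(\int_M u^2\psi\,d\mu\biggr)^{1/2} + \int_M u^2\frac{|\nabla\psi|^2}{\psi}\,d\mu.
\end{align*}
The first two integrals are already controlled from the earlier step. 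For the last, the bound $|\nabla\psi|^2/\psi \leq 4\gamma^2(\rho^2/s^2)\,\psi$, combined with the elementary estimate $(\rho^2/s)\,e^{\gamma\rho^2/s} \leq (C/\epsilon)\,e^{(\gamma+\epsilon)\rho^2/s}$ for any $\epsilon > 0$, reduces it to the weighted $L^2$ bound on $u^2$ with slightly enlarged weight $\gamma + \epsilon$; one chooses $\epsilon > 0$ so that $\gamma + \epsilon < 2\alpha$. Multiplying through by $s$ then delivers the gradient bound in the required form.

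The main obstacle is the interplay between the Gaussian decay rate $2\alpha$ and the $e^{C(k+1)\sqrt{s}}$ growth coming from the volume comparison: the weight $\gamma$ must be kept strictly below $2\alpha$ so that Young's inequality has room to absorb both the annular growth from the first step and the additional $\epsilon > 0$ needed for the $|\nabla\psi|^2/\psi$ term in the second step. This accounting drives the dependence of $C_\gamma$ on $\gamma$ and is the reason why the exponential prefactor $e^{2C's}$ in $s$ appears to be unavoidable in this approach.
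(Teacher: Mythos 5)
Your proof is correct, and it is essentially self-contained where the paper is not: the paper only establishes the integral estimate \eqref{eqn-ele1} and then outsources the rest to \cite[Lemmas 2.1--2.3]{TX-99}. Your treatment of the $|p_s|^2$ and $s^2|\Delta_x p_s|^2$ terms---pointwise Gaussian bounds from Lemma~\ref{lem-Gausspe} together with the annular decomposition controlled by \eqref{eqn-gdxy}---is precisely the content of \eqref{eqn-ele1}, and the identity $\Delta_x p_s=-\partial_s p_s$ (the generator being $-\Delta$ with $\Delta\ge 0$) is the right way to transfer the derivative bound of Lemma~\ref{lem-Gausspe} to the Laplacian. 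Your handling of the gradient term by integrating by parts against the weight $\psi=e^{\gamma\rho^2/s}$, applying Cauchy--Schwarz to $\int u\psi\Delta u$, absorbing $\tfrac12\int|\nabla u|^2\psi$, and enlarging the weight exponent by a small $\epsilon>0$ to handle $|\nabla\psi|^2/\psi$ is exactly the standard mechanism one expects the cited TX-99 lemmas to contain, and you have correctly identified that the slack $\gamma<2\alpha$ is what makes room for the enlarged exponent as well as for the extra $e^{C(k+1)\sqrt{s}}$ growth from \eqref{eqn-gdxy}. Two small remarks. First, the ``standard cut-off argument'' deserves a sentence more: take $\chi_R$ with $|\nabla\chi_R|\le 2/R$; the extra boundary term $4\int u^2\psi|\nabla\chi_R|^2$ is at most $(16/R^2)\int_M u^2\psi$, which vanishes as $R\to\infty$ since the $|p_s|^2$-weighted integral has already been shown finite, so both the finiteness of $\int|\nabla u|^2\psi$ and the identity are justified in one pass. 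Second, your absorption $s^2\le C_0\,e^s$ for the Laplacian prefactor is harmless but is only needed because Lemma~\ref{lem-Gausspe}, as printed, omits the natural $1/t$ factor on $|\partial_t p_t|$ that analyticity produces; with that factor restored the prefactor $s^2$ exactly cancels, and the dependence on $s$ would be a bit cleaner, but either way the final bound $C_\gamma\,e^{2C's}/V(y,\sqrt s)$ comes out as required.
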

\begin{proof}
  By \eqref{eqn-GD}, it is easy to see that for all $\gamma>0$,
  $s,t>0$ and $y\in M$, there exist two positive constants
  $C_{\gamma}$ (depending on $\gamma$ and the constants in
  \eqref{eqn-GD}) and $\tilde{C}$ such that
  \begin{align}\label{eqn-ele1}
    \int_{\rho(x,y)\geq \sqrt{t}}\e^{-2\gamma\frac{\rho^2(x,y)}{s}}\,\mu(d x)
    &\leq \e^{-\gamma t/s}\int_M \e^{-\gamma \frac{\rho^2(x,y)}{s}}\,\mu(dx) \notag\\
    &\leq  \e^{-\gamma t/s}\sum_{i=0}^{\infty}V(y,(i+1)\sqrt{s})\e^{-\gamma i^2}\notag\\
    &\leq C  \e^{-\gamma t/s}V(y,\!\sqrt{s}) \sum_{i=0}^{\infty}
      (i+1)^d \e^{-\gamma i^2}\e^{C(i+1)\sqrt{s}} \notag\\
    &\leq  C  \e^{-\gamma t/s}\e^{C\sqrt{s}}V(y,\!\sqrt{s}) \sum_{i=0}^{\infty}
      (i+1)^d \e^{-\gamma i^2}\e^{\gamma i^2/2+C^2s/2}\notag\\
    & \leq  C  \e^{-\gamma t/s}\e^{C\sqrt{s}+C^2s/2}V(y,\!\sqrt{s}) \sum_{i=0}^{\infty}
      (i+1)^d \e^{-\gamma i^2/2}\notag\\
    & \leq C_{\gamma} V(y,\!\sqrt{s}) \,\e^{-\gamma t/s} \e^{\tilde{C}s},
  \end{align}
  where the second inequality comes from condition \eqref{eqn-GD}. The
  remainder of the proof follows from \cite[Lemmas 2.1-2.3]{TX-99}.
\end{proof}
 
We now turn to the estimates for the Hessian of heat kernel. The
following lemma shows that the Hessian of heat semigroup also
satisfies an $L^2$-Gaffney off-diagonal estimate. Note that in the
following discussion the constant $C$ will be different in different
lines without confusion.

\begin{lemma}\label{lem-Gaffney}
  Assume that \eqref{eqn-Ric} holds.  There exist constants $C,C_2>0$
  such that for all $t\in (0,\fz)$, all Borel subsets $E,F\subset M$
  with compact closure, and all $f\in L^2(M)$ with {\rm
    supp}\,$f\subset E$,
  \begin{align*}
    \big\|\mathbbm{1}_Ft\, |\Hess P_tf|\big\|_2\leq C(1+\sqrt{t})\,\exp\left(-\frac{C_2\,\rho^2(E,F)}{t}\right)\|f\|_2.
  \end{align*}
\end{lemma}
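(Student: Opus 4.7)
The plan is to use Bochner's formula paired with a well-chosen spatial cutoff to reduce the Hessian off-diagonal bound to Gaffney-type bounds on $\nabla P_t f$ and $\Delta P_t f$, which in turn follow from Lemma~\ref{lem4}.

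First I handle the near-diagonal regime $r := \rho(E,F) \leq \sqrt{t}$ by a global $L^2$ bound: integrating Bochner's identity $|\Hess u|^2 = -\tfrac{1}{2}\Delta|\nabla u|^2 + \langle \nabla u,\nabla \Delta u\rangle - \Ric(\nabla u,\nabla u)$ against $1$ and integrating by parts gives $\|\Hess P_tf\|_2^2 \leq \|\Delta P_tf\|_2^2 + K\|\nabla P_tf\|_2^2$. Analyticity of the semigroup ($\|\Delta P_tf\|_2 \leq C t^{-1}\|f\|_2$, $\|\nabla P_tf\|_2 \leq C t^{-1/2}\|f\|_2$) then yields $t\|\Hess P_tf\|_2 \leq C(1+\sqrt{Kt})\|f\|_2$, and this is enough since $\e^{-C_2 r^2/t}\geq \e^{-C_2}$ in this range.

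Assume now $r > \sqrt{t}$. Let $u=P_tf$ and pick a smooth $\eta$ with $0\leq\eta\leq 1$, $\eta\equiv 1$ on $F$, $\eta\equiv 0$ on $\{x: \rho(x,E)<r/2\}$, and $|\nabla\eta|\leq C/r$. Multiplying the pointwise Bochner identity by $\eta^2$ and integrating, I transform $-\tfrac12\int\eta^2\Delta|\nabla u|^2\,d\mu$ via integration by parts and $\nabla|\nabla u|^2 = 2\Hess u(\nabla u,\cdot)$ into $2\int\eta\,\Hess u(\nabla u,\nabla\eta)\,d\mu$, while $\int\eta^2\langle\nabla u,\nabla\Delta u\rangle\,d\mu$ becomes $\int \eta^2(\Delta u)^2\,d\mu - 2\int\eta(\Delta u)\langle\nabla u,\nabla\eta\rangle\,d\mu$. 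Young's inequality applied to the two cross terms absorbs a small fraction of $\int\eta^2|\Hess u|^2$ on the left and produces
\begin{equation*}
\int\eta^2|\Hess u|^2\,d\mu \;\leq\; C\!\int|\nabla\eta|^2|\nabla u|^2\,d\mu + C\!\int\eta^2(\Delta u)^2\,d\mu + CK\!\int\eta^2|\nabla u|^2\,d\mu.
\end{equation*}

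The next step is Gaffney-type control on the set $G:=\{\eta\neq 0\}\subset\{x:\rho(x,E)\geq r/2\}$:
\begin{equation*}
\|\mathbbm{1}_G \nabla P_tf\|_2^2 \;\leq\; \frac{C}{t}\,\e^{-c r^2/t + C't}\|f\|_2^2,\qquad \|\mathbbm{1}_G \Delta P_tf\|_2^2 \;\leq\; \frac{C}{t^2}\,\e^{-c r^2/t + C't}\|f\|_2^2.
\end{equation*}
I obtain these from Lemma~\ref{lem4} via the kernel representation $\nabla P_tf(x)=\int_E\nabla_x p_t(x,y)f(y)\,d\mu(y)$: splitting $|\nabla_x p_t|\cdot|f|= (|\nabla_x p_t|\,\e^{\gamma\rho^2/(2t)})\cdot(|f|\,\e^{-\gamma\rho^2/(2t)})$, then Cauchy-Schwarz and Lemma~\ref{lem4} yield the weighted $L^2$ bound, after which the separation $\rho(x,y)\geq r/2$ for $(x,y)\in G\times E$ produces the Gaussian factor $\e^{-\gamma r^2/(4t)}$. (Equivalently, Davies' exponential perturbation method applied to $e^{\alpha\phi}\nabla P_t e^{-\alpha\phi}$ with $\phi$ a smooth $1$-Lipschitz approximation of $\rho(\cdot,E)$ gives the same bound.) Plugging these into the displayed inequality, using $|\nabla\eta|\leq C/r$ and $1/r^2\leq 1/t$, I get
\begin{equation*}
\int_F|\Hess P_tf|^2\,d\mu \;\leq\; C(1+K)\,t^{-2}\,\e^{-cr^2/t+C't}\|f\|_2^2,
\end{equation*}
and therefore $t\|\mathbbm{1}_F|\Hess P_tf|\|_2\leq C(1+\sqrt{t})\,\e^{-C_2 r^2/t}\|f\|_2$ after redistributing the $\e^{C't/2}$ factor via a slightly smaller exponent in the Gaussian decay.

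The main obstacle is the passage from Lemma~\ref{lem4}, which furnishes only the weighted $L^2$ integral over the first argument of $|\nabla_x p_t|^2$ and $|\Delta_x p_t|^2$, to the operator bounds $\|\mathbbm{1}_G\nabla P_t\mathbbm{1}_E\|_{2\to 2}$ and $\|\mathbbm{1}_G\Delta P_t\mathbbm{1}_E\|_{2\to 2}$. The Cauchy-Schwarz step introduces volume factors $V(y,\sqrt{t})^{-1}$ that must cancel against a dual integration; the cleanest way to achieve this is either to use the semigroup splitting $P_t=P_{t/2}P_{t/2}$ with the standard Gaffney bound for $P_{t/2}$ applied before the derivative, or to invoke Davies' trick directly on the first-order operators — both routes rely critically on the analyticity and self-adjointness of the semigroup already encoded in Lemma~\ref{lem-Gausspe}.
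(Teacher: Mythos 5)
Your route is genuinely different from the paper's. The paper writes $\Hess P_t = \nabla \e^{-(t-s_0)\Delta^{(1)}}\,dP_{s_0}$ with $s_0 = \tfrac t2\wedge 1$ (using the commutation rule $d P_s = \e^{-s\Delta^{(1)}}d$) and composes the $L^2$-Gaffney estimate for $\nabla\e^{-s\Delta^{(1)}}$ on one-forms from \cite{BDG-21} with the Gaffney estimate for $dP_u$ on functions from \cite{PTTS-2004}, via the composition lemma of \cite{HM03}. You instead test Bochner's identity against a cutoff $\eta^2$ and reduce to off-diagonal $L^2$ bounds on $\nabla P_t f$ and $\Delta P_t f$. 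That reduction step is sound (the near-diagonal branch, the integration by parts, and the Young absorption are all correct up to inessential sign choices), and conceptually it parallels what the paper itself does in the proof of Proposition~\ref{lem-Hess}.

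The gap is in how you certify the two Gaffney inputs. You explicitly take them from Lemma~\ref{lem4}, which is a \emph{weighted kernel} estimate and therefore inherits the factor $\e^{2C's}$ from Davies' pointwise bound \eqref{heat-kernel-upper-bound}. That factor cannot be ``redistributed into a slightly smaller Gaussian exponent'': for $r^2 = At$ with $A$ fixed and $t\to\infty$, $\e^{C't-cr^2/t}=\e^{C't-cA}\to\infty$, while the asserted bound has only the $(1+\sqrt t)$ growth; so the claimed final inequality does not follow from your displayed estimates. Independently, passing from Lemma~\ref{lem4} to the operator bounds $\|\mathbbm{1}_G\nabla P_t\mathbbm{1}_E\|_{2\to2}$ and $\|\mathbbm{1}_G\Delta P_t\mathbbm{1}_E\|_{2\to2}$ by Cauchy--Schwarz leaves an uncontrolled $\int_E V(y,\!\sqrt t)^{-1}\,d\mu(y)$, a problem you yourself flag in the last paragraph but do not resolve. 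Both problems disappear if one invokes the genuine $L^2\!\to\!L^2$ Gaffney estimates with pure Gaussian decay and no exponential-in-time factor --- for $\nabla P_t$ this is exactly \cite[(3.1)]{PTTS-2004} cited in the paper's proof, and for $\Delta P_t$ it follows from the Gaffney bound for $P_z$ at complex times together with Cauchy's formula (or from Davies' perturbation applied to $\e^{\lambda\psi}P_t\e^{-\lambda\psi}$); these are semigroup-level bounds, not kernel bounds, and are independent of the curvature constant $K$. As written, however, your proof derives the inputs from Lemma~\ref{lem4} and only gestures at the Davies/composition route as an ``equivalent'' alternative, so the argument is incomplete.
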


\begin{proof}
  Recall the following $L^2$-Gaffney off-diagonal estimate on
  one-forms \cite{BDG-21}. If \eqref{eqn-Ric} holds, then for
  $\alpha \in \Gamma_{L^2}(T^*M)$ with support
  ${\rm supp}(\alpha)\subset E$ and any $s\in (0,1)$, it holds
  \begin{align*}
    \big\|\mathbbm{1}_F\!\sqrt{s}\,|\nabla \e^{-s\Delta^{(1)}}\alpha|\big\|_2
    &\leq C\,(1+\sqrt{s})\,\exp\left(-\frac{c_1\,\rho^2(E,F)}{s}\right)\|\mathbbm{1}_E \alpha\|_2\\
    &\leq 2C\exp\left(-\frac{c_1\rho^2(E,F)}{s}\right)\|\mathbbm{1}_E \alpha\|_2
  \end{align*}
  for some positive constants $C$ and $c_1$.  On the other hand, we
  have Gaffney's off-diagonal estimate for $|\nabla P_{t_2}f|$ (see
  \cite[(3.1)]{PTTS-2004}), i.e.~for all $f \in L^2(M)$ with support
  in $E$ and $u>0$,
  \begin{align*}
    \big\|\mathbbm{1}_{F}\!\sqrt{u}\,|\nabla P_{u}f|\big\|_2\leq C\exp\left(-\frac{c_2\rho^2(E,F)}{u}\right)\|\mathbbm{1}_E f\|_2
  \end{align*}
  for some positive constants $C$ and $c_2$.  For $t>0$, denoting by
  $\Delta^{(1)}$ the Laplacian on one-forms, we may write
 $$\Hess P_tf=\frac{\sqrt{\left(t-(\frac{t}{2}\wedge 1)\right)}\,\nabla \e^{-\big(t-(\frac{t}{2}\wedge 1)\big)\Delta^{(1)}}\left(\sqrt{\frac{t}{2}\wedge 1}\,d P_{\frac{t}{2}\wedge 1}f\right)}{\sqrt{\left(t-(\frac{t}{2}\wedge 1)\right)\big(\frac{t}{2}\wedge 1\big)}}  $$
 so that using the composition rule of Gaffney's off-diagonal estimate
 (see \cite[Lemma 2.3]{HM03}), we obtain
 \begin{align*}
   \big\|\mathbbm{1}_{E}t\, |\Hess P_{t}f|\big\|_2\leq C\frac{t}{\sqrt{\left(t-(\frac{t}{2}\wedge 1)\right)(\frac{t}{2}\wedge 1)}}\exp\left(-\frac{c\,\rho^2(E,F)}{\max\left\{\frac{t}{2}\wedge 1, t-(\frac{t}{2}\wedge 1)\right\}}\right)\|\mathbbm{1}_E f\|_2,
 \end{align*}
 for some positive constants $C$ and $c$.  Note that for $0<t/2<1$,
 \begin{align*}
   \max\left\{\frac{t}{2}\wedge 1, t-\big(\frac{t}{2}\wedge 1\big)\right\}=\max\left\{\frac{t}{2}, t-\frac{t}{2}\right\}=\frac{t}{2};\quad
   \frac{t}{\sqrt{\left(t-\big(\frac{t}{2}\wedge 1\big)\right)(\frac{t}{2}\wedge 1)}}=2,
 \end{align*}
 and that for $t/2\geq 1$,
 \begin{align*}
   \max\left\{\frac{t}{2}\wedge 1, t-\big(\frac{t}{2}\wedge 1\big)\right\}=\max\big\{1, t-1\big\}=t-1\leq t;\quad
   \frac{t}{\sqrt{\left(t-\big(\frac{t}{2}\wedge 1\big)\right)(\frac{t}{2}\wedge 1)}}=\frac{t}{\sqrt{t-1}}\leq \sqrt{2t}.
 \end{align*}
 Therefore, we conclude that there exist positive constants $C$ and
 $C_2$ such that
 \begin{equation*}
   \big\|\mathbbm{1}_{F}t\, |\Hess P_{t}f|\big\|_2\leq C\,(1+\sqrt{t})\,\exp\left(-\frac{C_2\rho^2(E,F)}{t}\right) \|\mathbbm{1}_E f\|_2.\qedhere
 \end{equation*}
\end{proof}

The following proposition gives $L^2$-weighted estimates for the
Hessian of the heat kernel.

\begin{proposition}\label{lem-Hess}
  Assume that \eqref{eqn-Ric} holds.  Fix $\az\in (0,\frac{1}{4})$ as
  in Lemma \textup{\ref{lem-Gausspe}}.  Then for all
  $\gamma\in (0,2\alpha)$, $s>0$ and $y\in M$, there exists a constant
  $C>0$ such that
  \begin{align*}
    \int_M |\Hess_xp_s(x,y)|^2\,\exp\left(\gamma\,\frac{\rho^2(x,y)}{s}\right)\,\mu(dx)\leq \frac{C(1+Ks)\,\e^{2C's}}{s^2 V(y,\!\sqrt{s})}
  \end{align*}
  where $C'>0$ is the same constant as in Lemma \ref{lem4}.
\end{proposition}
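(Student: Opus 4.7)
The plan is to combine the Bochner--Weitzenb\"ock identity for the heat kernel with the weighted $L^2$ bounds on $p_s$, $\nabla p_s$ and $\Delta p_s$ provided by Lemma~\ref{lem4}. Write $u:=p_s(\cdot,y)$, fix $\gamma'\in(\gamma,2\alpha)$ and $\gamma''\in(\gamma',2\alpha)$, and set $\eta:=\exp(\gamma'\rho^2(\cdot,y)/(2s))$. Working first with a compactly supported cutoff $\chi_R\eta$ (and passing $R\to\infty$ only at the end), I apply Bochner to $u$,
\begin{align*}
\tfrac{1}{2}\Delta|\nabla u|^2 = |\Hess u|^2 + \langle\nabla u,\nabla\Delta u\rangle + \Ric(\nabla u,\nabla u),
\end{align*}
multiply by $\eta^2$ and integrate. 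I perform a single integration by parts in each of the first two terms on the right, stopping deliberately before a second IBP so that only $\nabla\eta^2$ (a Lipschitz quantity) enters and not $\Delta\eta^2$, which keeps the cut locus of $y$ out of the way. Young's inequality (absorbing a small multiple of $\int\eta^2|\Hess u|^2$ into the left) combined with $\Ric\geq -K$ on the last term should yield
\begin{align*}
\int_M \eta^2 |\Hess u|^2\,\mu(dx)\leq C\int_M\frac{|\nabla\eta^2|^2}{\eta^2}|\nabla u|^2\,\mu(dx)+C\int_M\eta^2|\Delta u|^2\,\mu(dx)+CK\int_M\eta^2|\nabla u|^2\,\mu(dx).
\end{align*}

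The next step is to reduce the weights on the right-hand side to the form handled by Lemma~\ref{lem4}. A direct computation gives $|\nabla\eta^2|^2/\eta^2 = 4(\gamma')^2(\rho^2/s^2)\exp(\gamma'\rho^2/s)$, and the standard polynomial-into-exponential absorption
$$
(\rho^2/s^2)\exp(\gamma'\rho^2/s)\leq (C/s)\exp(\gamma''\rho^2/s),\qquad \eta^2\leq\exp(\gamma''\rho^2/s),
$$
reduces everything to $\int|\nabla u|^2\exp(\gamma''\rho^2/s)\,\mu(dx)$ and $\int|\Delta u|^2\exp(\gamma''\rho^2/s)\,\mu(dx)$. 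Applying Lemma~\ref{lem4} with weight parameter $\gamma''$ gives the respective bounds $C\e^{2C's}/(sV(y,\!\sqrt{s}))$ and $C\e^{2C's}/(s^2V(y,\!\sqrt{s}))$. Assembling,
\begin{align*}
\int_M \eta^2 |\Hess u|^2\,\mu(dx)\leq \frac{C\e^{2C's}}{s^2V(y,\!\sqrt{s})}+\frac{CK\e^{2C's}}{sV(y,\!\sqrt{s})}=\frac{C(1+Ks)\e^{2C's}}{s^2V(y,\!\sqrt{s})}.
\end{align*}
Since $\eta^2\geq\exp(\gamma\rho^2/s)$ pointwise, this is the claim. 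The factor $1+Ks$ is cleanly traceable to the Ricci contribution from Bochner, while the $1/s^2$ scaling is inherited from $\int\eta^2|\Delta u|^2\,\mu(dx)$.

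The main technical obstacle I expect is the rigorous justification of the integration by parts together with the truncation $\chi_R\eta\to\eta$. The limiting boundary terms must vanish as $R\to\infty$, which \emph{a priori} needs control on $\Hess u$ itself. The standard remedy is to derive the inequality with $\chi_R\eta$ first, arrange the Young constants so that the bound is uniform in $R$, and then pass to the limit by monotone convergence; local elliptic regularity for the heat kernel ensures $\int_{B(y,R)}\eta^2|\Hess u|^2\,\mu(dx)$ is finite to begin with, and the pointwise Gaussian decay of $p_s$ from Lemma~\ref{lem-Gausspe} together with the weighted $L^2$ control on the lower-order derivatives from Lemma~\ref{lem4} handles the boundary contributions. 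A secondary but minor issue is the non-smoothness of $\rho(\cdot,y)$ at $y$ and on the cut locus; because the single-IBP strategy keeps only $\nabla\eta^2$ in the final estimates, this singularity is harmless throughout.
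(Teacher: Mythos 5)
Your proposal is correct, and it is a genuinely different (and arguably cleaner) route than the paper's. The paper begins by moving the Laplacian onto the weight, i.e.~it computes $\Delta\,\e^{\gamma\rho^2/s}$ explicitly via the product rule and then invokes the Laplacian comparison theorem $-\Delta\rho\leq \frac{d-1}{\rho}+K\rho$ to control the resulting $\rho\Delta\rho$ term (this is the paper's term $\mathrm{I}_1$). That step is what brings the cut locus of $y$ into play, and it contributes a second source of the constant $K$ beyond the Ricci term in Bochner. Your single-integration-by-parts strategy, keeping only $\nabla\eta^2$ and absorbing $|\nabla|\nabla u|^2|\leq 2|\Hess u|\,|\nabla u|$ by Young, avoids $\Delta\eta^2$ altogether. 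The benefits: no Laplacian comparison, no delicate treatment of $\Delta\rho$ on the cut locus (only the Lipschitz bound $|\nabla\rho|\leq 1$ is used), and the factor $K$ in $(1+Ks)$ traces exclusively to $\Ric\geq -K$ in Bochner. The cost is a slightly less explicit identity, since a piece of the Hessian integral is hidden in the Young-absorbed term; both approaches, however, land on exactly the same combination $\int |\nabla u|^2\,\frac{|\nabla\eta^2|^2}{\eta^2}+\int\eta^2(\Delta u)^2+K\int\eta^2|\nabla u|^2$, which reduces to Lemma~\ref{lem4} after the standard exponential absorption $(\rho^2/s)\e^{\gamma'\rho^2/s}\ls \e^{\gamma''\rho^2/s}$ with $\gamma<\gamma'<\gamma''<2\alpha$. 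Two small points you should tighten: the phrase ``the first two terms on the right'' is slightly off (the single IBPs are performed on the left-hand term $\Delta|\nabla u|^2$ and on $\langle\nabla u,\nabla\Delta u\rangle$; $|\Hess u|^2$ is not integrated by parts), and in the truncation argument the key observation is that the error terms carrying $\nabla\chi_R$ are controlled by $\frac{C}{R^2}\int_{B_{2R}\setminus B_R}|\nabla u|^2\eta^2$, which tends to $0$ as $R\to\infty$ since $\int_M|\nabla u|^2\eta^2<\infty$ by Lemma~\ref{lem4} (with $\gamma'<2\alpha$) --- this, together with a finite Young absorption on each $B_{2R}$, makes the $R\to\infty$ passage rigorous.
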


\begin{proof} 
  We begin by integrating Bochner's identity \eqref{eqn-BI} to obtain
  \begin{align*}
    \frac{1}{2}&\int |\nabla p_s|^2 \,\Delta \e^{\gamma\rho^2(x,y)/s}\,  \mu(dx)
                 =\frac{1}{2}\int_M (\Delta |\nabla p_s|^2)\,\e^{\gamma\rho^2(x,y)/s}\,\mu(dx)\\
               &=\int_M \left(-|\Hess p_s|^2_\HS+g(\nabla \Delta p_s, \nabla p_s )-\Ric(\nabla p_s,\nabla p_s)\right)\e^{\gamma\rho^2(x,y)/s}\,\mu(dx),
  \end{align*}
  which then implies
  \begin{align}\label{eq-Hess-upper-bound}
    &\int_M |\Hess p_s|_\HS^2\,\e^{\gamma\rho^2(x,y)/s}\,\mu(dx) \\
    &=-\frac{1}{2}\int_M |\nabla p_s|^2\Delta \e^{\gamma\rho^2(x,y)/s}\,\mu(dx)
      +\int_Mg( \nabla \Delta p_s, \nabla p_s) \,\e^{\gamma\rho^2(x,y)/s}\,\mu(dx)\notag\\
    &\quad-\int_M 
      \Ric (\nabla p_s, \nabla p_s)\,\e^{\gamma\rho^2(x,y)/s}\,\mu(dx)\notag\\
    &=-\frac{\gamma}{s}\int_M |\nabla p_s|^2\div\left(\e^{\gamma\rho^2(x,y)/s}\rho(x,y) 
      {\nabla}\rho(\cdot, y)(x)\right)\,\mu(dx) + \int \Delta p_s\div (\e^{\gamma\rho^2(x,y)/s}\nabla p_s)\,\mu(dx)\notag\\
    &\quad-\int \Ric(\nabla p_s, \nabla p_s)\,\e^{\gamma\rho^2(x,y)/s}\,\mu(dx)
      =:\mathrm{I}_1+\mathrm{I}_2+\mathrm{I}_3 \notag
  \end{align}
  (with the sign convention $\Delta=\div\nabla$ for the
  divergence). Now for any $\beta\in (0,2\az)$, let
  \begin{align*}
    E(s,y,\beta):=\int_M \lf[|p_s(x,y)|^2+s|\nabla _x p_s(x,y)|^2+s^2|\Delta_x p_s(x,y)|^2|\r]
    \e^{\beta\rho^2(x,y)/s}\,\mu(dx).
  \end{align*}
  Then by condition \eqref{eqn-Ric} and Lemma \ref{lem4}, we have
  \begin{align}\label{eqn-esI3}
    \mathrm{I}_3&\le K\int_M |\nabla p_s|^2\,\e^{\gamma\rho^2(x,y)/s}\,\mu(dx)
                  \le \frac{K}{s} E(s,y,\gz)\le \frac K s \frac{C_{\gamma}}{V(y,\!\sqrt{s})}\,\e^{2 C's}.
  \end{align}
 
  For $\mathrm{I}_2$, using the fact $|\nabla\rho|\le 1$, we deduce
  from Lemma \ref{lem4} again that
  \begin{align}\label{eqn-esI2}
    \mathrm{I}_2&=\int_M (\Delta p_s)^2\,\e^{\gamma\rho^2(x,y)/s}\,\mu(dx)
                  -\frac{2\gamma}{s}\int_M (\Delta p_s)\,\e^{\gamma\rho^2(x,y)/s} \rho(x,y)\langle\nabla \rho, \nabla p_s \rangle \,\mu(dx) \notag \\
                &\leq 2\int_M (\Delta p_s)^2\,\e^{\gamma\rho^2(x,y)/s}\,\mu(dx)+
                  \frac{\gamma^2}{s^2}\int_M \e^{\gamma\rho^2(x,y)/s}\rho^2(x,y)|\nabla p_s|^2\,\mu(dx)\notag\\ 
                &\le  \frac{C_\gz}{s^2}E(s,y,\gz')\le \frac 1 {s^2} \frac{C_{\gamma'}}{V(y,\!\sqrt{s})}\,\e^{2 C's},
  \end{align}
  where $\gamma<\gamma'<2\alpha$ with $\gamma'-\gamma$ sufficiently
  small.

  To bound $\mathrm{I}_1$, we write
  \begin{align}\label{eqn-esI1-1}
    \frac s{\gamma}\mathrm{I}_1
    &=\int_M |\nabla p_s|^2\,\e^{\gamma\rho^2(x,y)/s}\,\mu(dx) +2
      \int_M |\nabla p_s|^2\,\e^{\gamma\rho^2(x,y)/s}\frac{\gamma\rho^2(x,y)}{s}\,\mu(dx) \notag \\
    &\quad-\int_M |\nabla p_s|^2 \,\e^{\gamma\rho^2(x,y)/s}\rho(x,y)\Delta \rho(\cdot, y)(x)\,\mu(dx).
  \end{align}
  By the Laplacian comparison theorem (see e.g. \cite[p185]{Chavel}),
  we have
  \begin{align*}
    -\Delta \rho(\cdot,y)(x)\leq \frac{d-1}{\rho(x,y)}+K\rho(x,y)
  \end{align*}
  outside of the cut-locus. This yields in particular
  \begin{align*}
    &-\frac{\gamma}{s}\int_M |\nabla p_s|^2 \,\e^{\gamma\rho^2(x,y)/s}\rho(x,y)\Delta \rho(\cdot,y)(x)\,\mu(dx)\\
    &\leq \frac{ (d-1)\gamma}{s}\int_M |\nabla p_s|^2 \,\e^{\gamma\rho^2(x,y)/s}\,\mu(dx)+K\int_M |\nabla p_s|^2 \,\e^{\gamma\rho^2(x,y)/s}\,\frac{\gamma\rho^2(x,y)}{s}\,\mu(dx)\\
    &\leq  \frac{ (d-1)\gamma}{s}\int_M |\nabla p_s|^2 \,\e^{\gamma\rho^2(x,y)/s}\,\mu(dx)+K\int_M |\nabla p_s|^2 \,\e^{\frac{\gamma' \rho^2(x,y)}{s}}\,\mu(dx)\\
    &\le  \frac{C_\gz(1+Ks)}{s^2}E(s,y,\gz')
  \end{align*}
  with $\gamma<\gamma'<2\alpha$ as in \eqref{eqn-esI2}. Combining this
  estimate with \eqref{eqn-esI1-1}, it follows
  \begin{align}\label{eqn-esI1}
    \mathrm{I}_1\leq C_\gamma\frac{(1+Ks)}{s^2}E(s,y,\gz')
    \leq \frac{c(1+Ks)}{s^2\sqrt{V(y,\!\sqrt{s})}}\,\e^{2C' s}.
  \end{align}
  Altogether \eqref{eq-Hess-upper-bound} through \eqref{eqn-esI1}, we
  conclude that
  \begin{align*}
    \int_M |\Hess p_s|_\HS^2\,\e^{\gamma\rho^2(x,y)/s}\,\mu(dx)
    =\mathrm{I}_1+\mathrm{I}_2+\mathrm{I}_3 \leq c\frac{(1+Ks)}{s^2\sqrt{V(y,\!\sqrt{s})}}\,\e^{2C's}.
  \end{align*}
  which completes the proof of Proposition \ref{lem-Hess} by using the
  fact
  \begin{equation*}
    |\Hess p_s|^2\leq |\Hess p_s|^2_\HS.\qedhere
  \end{equation*}
\end{proof}

Using Proposition \ref{lem-Hess}, we obtain the following
$L^2$-integral estimates for Hessian of heat kernel.

\begin{corollary}\label{prop1}
  Assume \eqref{eqn-Ric} holds.  Fix $\az\in (0,\frac{1}{4})$ as in
  Lemma \textup{\ref{lem-Gausspe}}.  There exist $0<\beta<\alpha$ and
  $C''>C'>0$ with $C'$ as in Lemma \ref{lem4} such that
  \begin{align*}
    \int_{\rho(x,y)\geq t^{1/2}}|\Hess_xp_s(x,y)|\,\mu(dx)\leq 
    C\big(1+\sqrt{s}\big)\,\e^{C''s}\,\e^{-\beta t/s}s^{-1}
  \end{align*}
  for all $y\in M$ and $s,t>0$.
\end{corollary}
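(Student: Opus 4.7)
The plan is to pass from the weighted $L^2$-estimate for $\Hess_x p_s(x,y)$ of Proposition \textup{\ref{lem-Hess}} to the desired weighted $L^1$-estimate on the exterior of the ball $B(y,\sqrt{t})$ by the standard Cauchy--Schwarz / Davies trick: insert and remove a Gaussian weight $\e^{\gamma\rho^2(x,y)/s}$ and split the product into the weighted $L^2$ norm of the Hessian of the heat kernel times the $L^2$ norm of the weight restricted to $\{\rho(x,y)\ge \sqrt{t}\}$.

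More precisely, fix $\gamma\in(0,2\alpha)$ and write
\[
|\Hess_xp_s(x,y)|
=\bigl(|\Hess_xp_s(x,y)|\,\e^{\gamma\rho^2(x,y)/(2s)}\bigr)\cdot\e^{-\gamma\rho^2(x,y)/(2s)}.
\]
Cauchy--Schwarz then gives
\begin{align*}
\int_{\rho(x,y)\ge\sqrt{t}}|\Hess_xp_s(x,y)|\,\mu(dx)
&\le\left(\int_M|\Hess_xp_s(x,y)|^2\,\e^{\gamma\rho^2(x,y)/s}\,\mu(dx)\right)^{1/2}\\
&\qquad\times\left(\int_{\rho(x,y)\ge\sqrt{t}}\e^{-\gamma\rho^2(x,y)/s}\,\mu(dx)\right)^{1/2}.
\end{align*}
The first factor is controlled by Proposition \textup{\ref{lem-Hess}} by
\(C(1+Ks)^{1/2}\,\e^{C's}\bigl(s^2V(y,\sqrt{s})\bigr)^{-1/2}\).
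The second factor is estimated exactly as in \eqref{eqn-ele1} (with $\gamma'=\gamma/2$ there), yielding
$C_\gamma\,V(y,\sqrt{s})^{1/2}\,\e^{-(\gamma/4)t/s}\,\e^{\tilde C s/2}$.

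Multiplying the two, the local volume $V(y,\sqrt{s})$ cancels, and we obtain
\[
\int_{\rho(x,y)\ge\sqrt{t}}|\Hess_xp_s(x,y)|\,\mu(dx)
\le C\,\frac{(1+Ks)^{1/2}}{s}\,\e^{(C'+\tilde C/2)s}\,\e^{-(\gamma/4)t/s}.
\]
Since $(1+Ks)^{1/2}\le C(1+\sqrt{s})$ (with $C$ depending on $K$), setting $\beta:=\gamma/4$ and $C'':=C'+\tilde C/2$ yields the desired bound with $0<\beta<\alpha/2<\alpha$ and $C''>C'$.

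There is no real obstacle here; the only point to monitor is the choice of $\gamma$: it must be strictly less than $2\alpha$ so that Proposition \textup{\ref{lem-Hess}} applies, and $\beta=\gamma/4$ then automatically satisfies $\beta<\alpha$, as required. The cancellation of $V(y,\sqrt{s})$ between the two factors is what makes the resulting estimate independent of the base point $y$.
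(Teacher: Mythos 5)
Your proof is correct and follows essentially the same route as the paper's: Cauchy--Schwarz with a Gaussian weight, the weighted $L^2$-Hessian bound from Proposition~\ref{lem-Hess} for the first factor, and inequality~\eqref{eqn-ele1} for the tail integral, with the $V(y,\sqrt{s})$ factors canceling. The only cosmetic difference is that the paper fixes $\beta<\alpha$ directly and inserts the weight $\e^{\pm\beta\rho^2/s}$, whereas you work with $\gamma\in(0,2\alpha)$ and obtain $\beta=\gamma/4<\alpha/2$, slightly less than the paper's range but amply sufficient for the statement.
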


\begin{proof}
  Let $0<\beta<\alpha$. By Cauchy's inequality we obtain
  \begin{align*}
    &\int_{\rho(x,y)\geq t^{1/2}}|\Hess_xp_s(x,y)|\,\mu(dx)\\
    & \leq \lf(\int_M|\Hess_xp_s(x,y)|^2\,\e^{2\beta\,\rho^2(x,y)/s}\,\mu(dx)\r)^{1/2}\lf(\int_{\rho(x,y)\geq t^{1/2}}\e^{-2\beta\,\rho^2(x,y)/s}\,\mu(dx)\r)^{1/2}\\
    &\leq \frac{C\,\e^{C's}(1+\sqrt{s})}{s\sqrt{V(y,\!\sqrt{s})}}\sqrt{V(y,\!\sqrt{s})}\,\e^{-\beta t/s}\,\e^{\tilde{C}s}\\
    &=\frac{C(1+\sqrt{s})}{s}\,\e^{C's-\beta t/s}\,\e^{\tilde{C}s}\leq \frac{C(1+\sqrt{s})}{s}\,\e^{C''s-\beta t/s}
  \end{align*}
  where the second inequality follows from Proposition \ref{lem-Hess}
  and inequality \eqref{eqn-ele1}. This finishes the proof.
\end{proof}

\subsection{Stochastic Hessian formulas and pointwise estimates for
  Hessian of heat kernel}

In this subsection, we establish some pointwise and $L^p$-integral
estimates for the Hessian of the heat kernel when $p>2$. To this end, let
us first introduce some necessary notations.  For fixed $x\in M$, let
$B_t$ be the stochastic anti-development of $X.(x)$ which is a
Brownian motion in $T_xM$.  Let $/\!/_t\colon T_x M\to T_{X_t(x)} M$
be the parallel transport and $Q_t\colon T_{x}M\rightarrow T_{X_t}M$ be the damped parallel transport
 defined as the solution to the following pathwise ordinary covariant differential
equation along the trajectories of $X_t$, 
\begin{align}\label{eqn-Qt}
	DQ_t=-\Ric^{\sharp}Q_t\,dt,\quad Q_0={\rm id}_{T_{x}M}
\end{align}
with $DQ_t=/\!/_t\,d\,/\!/_t^{-1}Q_t$.
  For each $w\in T_{x}M$
define an operator-valued process
$W_t(\cdot,w): T_{x}M\rightarrow T_{X_t}M$ by
\begin{align*}
  W_t(\cdot, w)=Q_t\int_0^tQ_r^{-1}R(/\!/_r\,dB_r,Q_r(\cdot))Q_r(w)-Q_t\int_0^tQ_r^{-1}(\nabla \Ric^{\sharp}+d^*R)(Q_r(\cdot), Q_r(w))\,dr.
\end{align*}
This means that the process $W_t(\cdot,w)$ is the solution to the
following covariant It\^{o} equation
\begin{equation*}\left\{\begin{aligned}
      DW_t(\cdot,w)&=R(/\!/_tdB_t, Q_t(\cdot))Q_t(w)-(d^*R+\nabla
      \Ric^{\sharp})(Q_t(\cdot), Q_t(w))\,dt -\Ric^{\sharp}(W_t(\cdot,
      w))\,dt,\\ W_0(\cdot,w)&=0.
    \end{aligned}\right.\end{equation*}
First, we collect some easy curvature estimates for Riemannian
manifolds $M$ satisfying condition \eqref{eqn-H}.

\begin{lemma}\label{lem-Hc}
  Assume that \eqref{eqn-H} holds. There exist constants $C>0$,
  $\theta>0$ such that for any $t>0$,
  \begin{align}\label{R-esti}
    \sup_{x\in M}\E^x\left[\int_0^t|R|^2(X_s)\,ds+\int_0^t|\nabla \Ric^{\sharp}+d^*R|^2(X_s)\,ds\right]\leq C\e^{2 \theta t}.
  \end{align}
\end{lemma}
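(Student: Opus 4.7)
The plan is to reduce the lemma to a Khasminskii-type iteration for Kato-class potentials. By hypothesis \eqref{eqn-H}, we have the pointwise domination $|R|^2 \le K_1$ and $|\nabla \Ric^{\sharp}+d^*R|^2 \le K_2$ with $K_1,K_2\in\mathcal{K}(M)$. Hence it suffices to prove that for any $K\in\mathcal{K}(M)$ there exist constants $C,\theta>0$ such that
\begin{align*}
\sup_{x\in M}\E^x\!\left[\int_0^t |K(X_s)|\,ds\right]\le C\,\e^{\theta t}\qquad\text{for all }t>0,
\end{align*}
and then sum the two resulting estimates. Stochastic completeness of $M$, needed to define $X$ on $[0,t]$, follows from the lower Ricci bound in \eqref{eqn-H}.

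To obtain the Kato-class bound, I would invoke the defining property \eqref{eqn-Katoc}: choose $t_0>0$ such that
\begin{align*}
a:=\sup_{x\in M}\E^x\!\left[\int_0^{t_0}|K(X_s)|\,ds\right]\le 1.
\end{align*}
For general $t>0$, write $t=nt_0+r$ with $n\in\N$ and $0\le r<t_0$, and split
\begin{align*}
\E^x\!\left[\int_0^t|K(X_s)|\,ds\right]
=\sum_{k=0}^{n-1}\E^x\!\left[\int_{kt_0}^{(k+1)t_0}|K(X_s)|\,ds\right]+\E^x\!\left[\int_{nt_0}^{t}|K(X_s)|\,ds\right].
\end{align*}
The Markov property at time $kt_0$ rewrites each summand as $\E^x\bigl[\E^{X_{kt_0}}\int_0^{t_0}|K(X_s)|\,ds\bigr]\le a$, and similarly the remainder is bounded by $a$. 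Summing yields the linear bound $(n+1)a\le a(1+t/t_0)$, which is dominated by $C\,\e^{\theta t}$ for any fixed $\theta>0$ after enlarging $C$.

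Applying this to $K_1$ and $K_2$ and adding the two resulting inequalities gives \eqref{R-esti} with $\theta$ and $C$ chosen so that $2\theta$ absorbs both potentials. There is no real obstacle here: the argument is entirely standard once the Kato-class defining property is invoked and the Markov property is applied to $X_t$; the only content specific to condition \eqref{eqn-H} is the pointwise domination that allows the reduction to $K_1,K_2$.
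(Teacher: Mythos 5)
Your proof is correct, but it follows a genuinely different path from the one in the paper. The paper's argument first applies the crude inequality $s\leq \e^s$ to bound $\E^x\!\int_0^t K_i(X_s)\,ds$ by $\E^x\exp\!\bigl(\int_0^t K_i(X_s)\,ds\bigr)$, and then invokes a Khasminskii-type exponential moment estimate for Kato-class potentials (Lemma~3.9 of G\"uneysu--Post) which gives $\sup_x\E^x\exp\!\bigl(\int_0^t|K(X_s)|\,ds\bigr)\leq C\e^{ct}$. You instead prove the first-moment bound directly, by choosing $t_0$ so small that the Kato sup is $\le 1$ on $[0,t_0]$, and then iterating via the Markov property at times $kt_0$; this is exactly the Khasminskii iteration applied to the additive functional itself rather than to its exponential. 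Your route is more elementary (it reproves, in a simpler first-moment form, precisely the mechanism underlying the cited lemma) and in fact yields a stronger conclusion: a \emph{linear} bound $C(1+t/t_0)$, which of course implies the exponential bound $C\e^{2\theta t}$ that the lemma asserts. The paper's route is shorter on the page because it outsources the iteration to a reference, and the exponential moment estimate is genuinely needed elsewhere (e.g.\ implicitly in controlling $W_t$), so the authors had reason to invoke it here. Both proofs are sound; the only minor caveat in yours is the standing assumption of stochastic completeness, which you correctly note follows from the lower Ricci bound contained in condition~\eqref{eqn-H}.
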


\begin{proof}
  By means of the trivial inequality $s\leq\e^s$ for $s\geq0$, we see
  that
  \begin{align*}
    &\sup_{x\in M}\E^x\left[\int_0^t|R|^2(X_s)\,ds+\int_0^t|\nabla \Ric^{\sharp}+d^*R|^2(X_s)\,ds\right]\\
    &\leq \sup_{x\in M}\E^x\left[\int_0^tK_1(X_s)\,ds+\int_0^tK_2(X_s)\,ds\right]\\
    &\leq \sup_{x\in M}\E^x\left[\exp\left(\int_0^tK_1(X_s)\,ds \right)\right]
      +\sup_{x\in M}\E^x\left[\exp\left(\int_0^tK_2(X_s)\,ds\right)\right].
  \end{align*}
  Recall in \cite[Lemma 3.9]{GP-15b} that for any
  $K\in \mathcal{K}(M)$, there exist positive constants $c$ and $C$
  such
  \begin{align*}
    \sup_{x\in M}\E^x\left[\exp\left(\int_0^t |K(X_s)|\,ds\right)\right]\leq C\e^{ct}.
  \end{align*}
  This immediately completes the proof by substituting $K_1$ and $K_2$
  as $K$.
\end{proof}
By this lemma and Bismut-type Hessian formula, we obtain the following
Hessian estimates for heat semigroup.
\begin{proposition}\label{lem1}
  Assume that \eqref{eqn-H} holds.  Then, for any $p\geq 2$, there
  exist positive constants $C$ and $\theta$ as in Lemma
  \textup{\ref{lem-Hc}} such that, for every $f\in \mathcal{B}_b(M)$,
  $t>0$,
  \begin{align}
    &t\,|\Hess P_tf|(x)\leq C(1+\sqrt{t})\,\e^{(2K+\theta)t}\lf(P_t|f|^p\r)^{1/p}(x); \label{lem1-i1} \\
    &t\,\big\| |\Hess P_tf|\,\big\|_p\leq C(1+\sqrt{t})\,\e^{(2K+\theta)t}\|f\|_p .\label{lem1-i2}
  \end{align}
\end{proposition}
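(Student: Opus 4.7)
The plan is to use the Bismut-type representation formula for the Hessian of the heat semigroup due to Elworthy and Li~\cite{EL94,EL-98}, in combination with the Kato-exponential moment bound provided by Lemma~\ref{lem-Hc}. For $v\in T_xM$ with $|v|\le 1$ and a smooth weight $k\colon[0,t]\to\R$ satisfying $k_0=1$, $k_t=0$---the canonical choice being $k_s=1-s/t$---such a formula expresses the Hessian as
\begin{equation*}
  \Hess P_tf(x)(v,v) = \E^x\!\left[f(X_t)\,\mathcal{N}_t(v)\right],
\end{equation*}
where $\mathcal{N}_t(v)$ is a random variable built from products of It\^o integrals in $\langle Q_sv,/\!/_s dB_s\rangle$ and a first-order stochastic integral in $\langle W_s(v,v),/\!/_s dB_s\rangle$, together with the appropriate Lebesgue quadratic-variation correction. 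Polarization then yields the corresponding representation for $\Hess P_tf(x)(v_1,v_2)$.

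By H\"older's inequality with conjugate exponents $(p,q)$,
\begin{equation*}
  |\Hess P_tf|(x) \le (P_t|f|^p)^{1/p}(x)\,\sup_{|v|\le 1}\bigl(\E^x|\mathcal{N}_t(v)|^q\bigr)^{1/q}.
\end{equation*}
Since $p\ge 2$ forces $q\le 2$, Lyapunov's inequality reduces the task to bounding $(\E^x|\mathcal{N}_t(v)|^2)^{1/2}$. From the ODE \eqref{eqn-Qt} and the assumption $\Ric\ge -K$ one has the pathwise estimate $|Q_s|\le \e^{Ks}$, which together with It\^o isometry controls the $Q$-contributions by $C\e^{2Kt}/t$. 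The $W$-contribution requires the second moment of $W_s(v,v)$: applying the covariant It\^o formula to $|W_s|^2$, inserting the defining SDE for $W_s$, and separating $|Q_s|\le \e^{Ks}$ from the curvature factors by Cauchy--Schwarz, one obtains a differential inequality
\begin{equation*}
  \frac{d}{ds}\E^x|W_s|^2 \le (2K+1)\E^x|W_s|^2 + C\e^{4Ks}\bigl(\E^x|R|^2(X_s)+\E^x|\nabla\Ric^\sharp+d^*R|^2(X_s)\bigr),
\end{equation*}
from which Gronwall's lemma together with Lemma~\ref{lem-Hc} delivers $\E^x|W_s(v,v)|^2\le C\,\e^{(4K+2\theta)s}$. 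Plugging back into $\mathcal{N}_t$ gives
\begin{equation*}
  \sup_{|v|\le 1}\bigl(\E^x|\mathcal{N}_t(v)|^2\bigr)^{1/2} \le \frac{C(1+\sqrt{t})}{t}\,\e^{(2K+\theta)t},
\end{equation*}
which, after multiplication by $t$, is \eqref{lem1-i1}. The norm bound \eqref{lem1-i2} then follows by raising \eqref{lem1-i1} to the $p$-th power, integrating in $x$, and using Fubini together with the $L^1$-contractivity $\int_M P_t|f|^p\,d\mu\le \|f\|_p^p$.

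The main delicate point is the energy estimate for $W_s$. Because $W_s$ is not a martingale---owing to the drift $-\Ric^\sharp W_s$ and the forcing $(\nabla\Ric^\sharp+d^*R)(Q_s\cdot,Q_sv)$---BDG on the martingale piece must be combined with Cauchy--Schwarz on the drift and with Gronwall's lemma to absorb the $-\Ric^\sharp W_s$ feedback. Distributing the Cauchy--Schwarz factors carefully between the pointwise bound $|Q_s|\le \e^{Ks}$ and the Kato-exponential bounds on $|R|^2$ and $|\nabla\Ric^\sharp+d^*R|^2$ is what keeps the exponent on the right-hand side at $2K+\theta$ rather than the naive $4K+\theta$, and this is the only place where the Kato class hypothesis \eqref{eqn-H}, rather than pointwise curvature bounds, is used.
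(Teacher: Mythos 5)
Your proposal follows essentially the same route as the paper: a Bismut-type Hessian representation from Elworthy--Li, the second-moment bound $\E^x|W_t(v,w)|^2\le C\,\e^{(4K+2\theta)t}$ obtained by It\^o's formula on $|W_s|^2$, Cauchy--Schwarz, Gronwall, and the Kato-exponential estimate of Lemma~\ref{lem-Hc}, then H\"older with conjugate exponents $(p,q)$ and Lyapunov (using $p\ge2\Rightarrow q\le2$) for \eqref{lem1-i1}, and finally integration in $x$ with $L^1$-contractivity for \eqref{lem1-i2}. The one genuine difference is the choice of weight in the Bismut formula: the paper uses the split pair $k_s=\frac{t-2s}{t}\vee 0$ and $\ell_s=1\wedge\frac{2(t-s)}{t}$, concentrating the two stochastic integrals on the disjoint intervals $[0,t/2]$ and $[t/2,t]$ so that the Hessian is the sum of one integral against $W$ and a product of two \emph{independent} It\^o integrals, with no quadratic-variation correction; you invoke a single weight $k_s=1-s/t$ with an explicit Lebesgue correction term. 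Both variants are valid and produce the same $(1+\sqrt t)/t$ scaling, and the paper's split form simply makes the bookkeeping cleaner (the prefactors $2/t$ and $4/t^2$ read off from $|\dot k|=|\dot\ell|=2/t$, and the conditional-independence over $\mathscr F_{t/2}$ factors the second moment of the product). One small correction of emphasis: the exponent $2K+\theta$ in \eqref{lem1-i1} arises simply because the estimate involves the square root of $\E|W_t|^2\le C\e^{(4K+2\theta)t}$ (and similarly $|Q_s|\le\e^{Ks}$ from $\E$ of a squared It\^o integral), rather than from a delicate redistribution of Cauchy--Schwarz factors; any reasonable split gives the same final exponent.
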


\begin{proof}
  We start by recalling the following Bismut type formula for
  $\Hess P_t$ (see \cite{APT-03,EL-98,CLW-21}). Let $x\in D$ with
  $v,w\in T_xM$, $f\in \mathcal{B}_b(M)$ and $0<s<t$.  Suppose that
  $D_1$ and $D_2$ are regular domains such that $x\in D_1$ and
  $\bar{D}_1\subset D_2\subset D$, and denote by $\sigma,\tau$ the
  exit times of $X.(x)$ from $D_1$ and $D_2$ respectively.  Assume
  that $k,\ell$ are bounded adapted processes with paths in the
  Cameron-Martin space $L^2([0,t]; [0,1])$ such that $k_r=0$ for
  $r\geq \sigma \wedge s$, $k_0=1$, $\ell_r=1$ for
  $r\leq \sigma \wedge s$, $\ell_s=0$ for $r\geq \tau \wedge t$.
  Then, for $f\in \mathcal{B}_b(M)$, we have
  \begin{align}
    (\Hess P_tf))(v,w)
    &=-\E^x\left[f(X_t)\int_0^t\langle W_s(\dot{k}_sv,  w), /\!/_s \,d B_s\rangle\right]\notag\\
    &\quad+\E\left[f(X_t)\int_s^t\langle Q_r(\dot{\ell}_rw), /\!/_r\,  d B_r\rangle
      \int_0^{s}\langle Q_r(\dot{k}_rv), /\!/_r\,d B_r\rangle\right].\label{HessBismutFormula}
  \end{align}
  Note that under condition $\bf (H)$ one has $|Q_t|^2\leq \e^{2Kt}$
  and according to the It\^{o} formula, then for $t<\tau<\tau_D$ and
  $0<\delta<2K$,
  \begin{align*}
    d |W_s(v, w)|^2&=2\,\langle R(/\!/_s d B_s, Q_s(v))Q_s(w), W_s(v,w)\rangle+2 |R^{\sharp,\sharp}(Q_s(v),Q_s(w))|_{\rm HS}^2\, d s\\
                   &\quad- 2\,\langle (d^* R+\nabla \Ric^{\sharp})(Q_s(v), Q_s(w)), W_s(v,w) \rangle\, d s\\
                   &\quad- 2\Ric(W_s(v,w),W_s(v,w))\,d s\\
                   &\leq 2\,\langle R(/\!/_s d B_s, Q_s(v))Q_s(w), W_s(v,w)\rangle+ 2\e^{4Ks}K_1(X_s)\, d s\\
                   &\quad+ 2\sqrt{K_2(X_s)} \,\e^{2Ks} |W_s(v,w)|\, d s+2K |W_s(v,w)|^2\, d s\\
                   &\leq 2\,\langle R(/\!/_s d B_s, Q_s(v))Q_s(w), W_s(v,w)\rangle+ 2\e^{4Ks}K_1(X_s)\, d s\\
                   &\quad+ (2\delta)^{-1}\e^{4Ks}K_2(X_s) \, d s+(2K+\delta) |W_s(v,w)|^2\, d s.
  \end{align*}
  From this and Lemma \ref{lem-Hc} there exist constants $C>0$ and
  $\theta>0$ such that
  \begin{align}\label{est-Wt}
    \E|W_{t\wedge \tau_D}(v,w)|^2
    &\leq 2\e^{(2K+\delta)t}\left(\E\left[\int_0^t\e^{(2K-\delta)s}K_1(X_s)\,ds\right]+(4\delta)^{-1}\E\left[\int_0^t\e^{(2K-\delta)s}K_2(X_s)\,ds\right]\right) \\
    &\leq  2\max\{1,(4\delta)^{-1}\}\e^{4Kt} \E\left[\int_0^tK_1(X_s)\,ds+\int_0^tK_2(X_s)\,ds\right] \notag\\
    &\leq  C \,\e^{(4K+2\theta)t}.\notag
  \end{align}
  Since the upper of $ \E|W_{t\wedge \tau_D}(v,w)|^2$ is independent
  of the domain $D$, taking a sequence of domains $D_n$ increasing to
  $M$, we conclude that
  $ \E|W_{t}(v,w)|^2\leq C \,\e^{(4K+2\theta)t}.$ All these estimates
  allow to verify that formula \eqref{HessBismutFormula} holds for the
  deterministic functions $k_s=\frac{t-2s}{t}\vee 0$ and
  $\ell_s=1\wedge \frac{2(t-s)}{t}$ as well.  Then, by estimate
  \eqref{est-Wt} and $|Q_s|\leq \e^{Ks}$, we get for any $p\geq 2$,
  \begin{align}\label{Hess-pointwise-esti}
    |\Hess P_tf|
    &\leq  \frac{2}{t}(P_t|f|^p)^{1/p}\left[\int_0^{t/2}\E|W_s(v,w)|^2\,ds\right]^{1/2}\notag\\
    &\quad+ \frac{4}{t^2}(P_t|f|^p)^{1/p}\E\left[\Big(\int_0^{t/2}\langle Q_s(w),/\!/_sdB_s\rangle\Big)^2
      \E\Big[\Big(\int_{t/2}^t\langle Q_s(v), /\!/_sdB_s \rangle\Big)^2\Big|\SF_{t/2}\Big]\right]^{1/2}\notag\\
    &\leq \frac{2}{t}(P_t|f|^p)^{1/p}\left(\int_0^{t/2}\e^{(4K+2\theta)s}\,ds\right)^{1/2}\notag\\
    &\quad+ \frac{4}{t^2}(P_t|f|^p)^{1/p}\E\def\ovz{\overrightarrow}
      \left[\int_0^{t/2}\e^{2Ks}\,ds\right]^{1/2}\left(\int_{t/2}^t\e^{2Ks}\,ds\right)^{1/2}\notag\\
    &\leq  \frac{C}{\sqrt{t}}\,\e^{(2K+\theta)t}(P_t|f|^p)^{1/p}+
      \frac{C}{t}\,\e^{2Kt}(P_t|f|^p)^{1/p}\notag\\
    &\leq  \frac{C(1+\sqrt{t})}{t}\,\e^{(2K+\theta)t}(P_t|f|^p)^{1/p}
  \end{align}
  which proves \eqref{lem1-i1}.  Moreover, integration with respect to
  the measure $\mu$ yields
  \begin{align*}
    \mu\big(|\Hess P_tf|^p\big)^{1/p}\leq  \frac{C(1+\sqrt{t})}{t}\,\e^{(2K+\theta)t} \mu(|f|^p)^{1/p}.
  \end{align*}
  This proves \eqref{lem1-i2} and finishes the proof.
\end{proof}

The following proposition gives a pointwise estimate for the Hessian
of heat semigroup.

\begin{proposition}\label{lem5}
  Assume that \eqref{eqn-H} holds.  Then there exist constants
  $C, \theta>0$ such that
  \begin{align}\label{hess-gra}
    |\Hess P_tf|\leq
    \e^{2Kt}\left(P_t|\Hess f|^2\right)^{1/2}+C\e^{(2K+\theta) t}\,\left(P_t|df|^2\right)^{1/2}
  \end{align}
  for $t>0$ and $f\in C_0^{\infty}(M)$.
\end{proposition}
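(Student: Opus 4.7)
The strategy is to apply the Elworthy--Li Hessian formula for the heat semigroup in its non-reduced form (keeping $\Hess f$ and $df$ on the right-hand side), and then estimate each of the two resulting expectations separately by Cauchy--Schwarz, using the curvature bound $\Ric\geq -K$ to control the damped parallel transport $Q_t$ and the Kato-type bound already established in the proof of Proposition \ref{lem1} to control the process $W_t$.

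Concretely, the starting point is the identity
\begin{align*}
(\Hess P_tf)(v,w)
=\E^x\bigl[(\Hess f)_{X_t}(Q_tv,Q_tw)+\langle \nabla f(X_t),W_t(v,w)\rangle\bigr],
\end{align*}
valid for $f\in C_0^\infty(M)$ and $v,w\in T_xM$; this is the unreduced Hessian formula obtained by taking $k\equiv 1$, $\ell\equiv 1$ in the family of representations from which \eqref{HessBismutFormula} is derived. Its validity on the whole of $M$ (rather than just inside a regular domain) follows, exactly as in the proof of Proposition \ref{lem1}, by exhausting $M$ by an increasing sequence of regular domains $D_n\uparrow M$, writing the formula up to exit time, and passing to the limit using the \emph{uniform} $L^2$-bound on $W_t$ provided by condition \eqref{eqn-H} and Lemma \ref{lem-Hc}.

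For the first term, the ODE \eqref{eqn-Qt} together with $\Ric\geq -K$ yields the pointwise bound $|Q_t|\leq \e^{Kt}$, so
\begin{align*}
\bigl|\E\bigl[(\Hess f)_{X_t}(Q_tv,Q_tw)\bigr]\bigr|
\leq \e^{2Kt}|v||w|\,\E\bigl[|\Hess f|(X_t)\bigr]
\leq \e^{2Kt}|v||w|\bigl(P_t|\Hess f|^2\bigr)^{1/2}(x).
\end{align*}
For the second term, Cauchy--Schwarz in probability and in $L^2$ gives
\begin{align*}
\bigl|\E\bigl[\langle \nabla f(X_t),W_t(v,w)\rangle\bigr]\bigr|
\leq \bigl(\E|W_t(v,w)|^2\bigr)^{1/2}\bigl(P_t|df|^2\bigr)^{1/2}(x),
\end{align*}
and the estimate $\E|W_t(v,w)|^2\leq C\,\e^{(4K+2\theta)t}|v|^2|w|^2$ is precisely the inequality derived in \eqref{est-Wt} during the proof of Proposition \ref{lem1}, which relies only on \eqref{eqn-H}, the bound $|Q_t|\leq \e^{Kt}$, the It\^o formula for $|W_t(v,w)|^2$, and the Kato exponential estimate from Lemma \ref{lem-Hc}. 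Taking supremum over unit vectors $v,w$ and adding the two bounds gives \eqref{hess-gra} with the same constants $C$ and $\theta$ as in Lemma \ref{lem-Hc}.

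The only non-routine point is the justification of the unreduced Hessian formula itself for general $f\in C_0^\infty(M)$ on a possibly non-compact manifold; all required tightness to exchange expectation and the limit $D_n\uparrow M$ is supplied by the uniform bounds $|Q_t|\leq\e^{Kt}$ and $\E|W_t(v,w)|^2\leq C\e^{(4K+2\theta)t}|v|^2|w|^2$ together with boundedness of $\Hess f$ and $df$. Once the formula is in hand, the estimate is a clean Cauchy--Schwarz argument, with the $\e^{2Kt}$ coming from $|Q_t|^2$ and the additional $\e^{\theta t}$ absorbing the Kato-class contribution of $|R|^2$ and $|\nabla\Ric^\sharp+d^*R|^2$ through $W_t$.
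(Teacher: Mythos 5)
Your proof follows essentially the same strategy as the paper's: establish the unreduced (non-Bismut) Hessian formula
\[
(\Hess P_tf)(v,w)=\E^x\bigl[(\Hess f)(Q_tv,Q_tw)(X_t)+\langle\nabla f(X_t),W_t(v,w)\rangle\bigr],
\]
then bound each term via Cauchy--Schwarz, using $|Q_t|\le\e^{Kt}$ (from $\Ric\ge -K$) and the $L^2$-estimate $\E|W_t(v,w)|^2\le C\e^{(4K+2\theta)t}|v|^2|w|^2$ (from condition \eqref{eqn-H} and Lemma \ref{lem-Hc}). The paper, however, does not obtain the unreduced formula by specializing \eqref{HessBismutFormula}: setting $k\equiv 1$, $\ell\equiv 1$ there is not admissible (the boundary conditions on $k$, $\ell$ force non-constant choices, and with $\dot k\equiv\dot\ell\equiv 0$ the right-hand side of \eqref{HessBismutFormula} collapses to zero). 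Instead the paper derives it from scratch: it defines the process
\[
N_s(v,w)=(\Hess P_{t-s}f)(Q_sv,Q_sw)(X_s)+(dP_{t-s}f)(W_s(v,w))(X_s),
\]
checks by It\^o's formula, using the commutation identities \eqref{eqn-Re1} and \eqref{eqn-Re2} (the second being a consequence of the differential Bianchi identity) together with the covariation term $[d(dP_{t-s}f),DW_s]_s$, that $N_s$ is a local martingale, and then upgrades it to a true martingale on $[0,t-\varepsilon]$ by invoking the pointwise bound \eqref{Hess-pointwise-esti} from Proposition~\ref{lem1} to bound $|\Hess P_{t-s}f|$ and the Bismut derivative formula \eqref{derivative-formula} to bound $|\nabla P_{t-s}f|$. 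The unreduced formula \eqref{Hessian-formula} then follows by equating $\E[N_0]=\E[N_{t-\varepsilon}]$ and letting $\varepsilon\to 0$. Your reference to exhausting $M$ by regular domains $D_n\uparrow M$ is the mechanism the paper uses to make the $L^2$-bound on $W_t$ global, not the mechanism for establishing the formula itself; the genuine technical content in the paper's derivation is the local-martingale verification via Bianchi and the $\varepsilon$-cutoff, which your sketch leaves implicit. Once the formula is granted (it does appear in the cited Elworthy--Li and APT literature, so treating it as a quotable input is defensible), the remaining Cauchy--Schwarz computation and the constants you obtain match the paper exactly.
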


\begin{proof}
  Recall the following relation
  \begin{align}\label{eqn-Re1}
    d\Delta f=\tr \Hess(df)-df(\Ric^{\sharp})
  \end{align}
  where $\Hess(df)=\nabla^2df$.
  Moreover, using the differential Bianchi identity
  \cite[p.~185]{CH97}, we have
  \begin{align}\label{eqn-Re2}
    \Hess(\Delta f)=\tr \Hess(\Hess f)-2(\Hess f)
    (\Ric^{\sharp}\otimes {\rm id}-R^{\#,\#})-df(d^*R+\nabla \Ric^{\sharp}).
  \end{align}
  Now, for $s>0$ and $v,w\in T_xM$, let
  \begin{align*}
    N_s(v,w):=(\Hess P_{t-s}f)(Q_s(v), Q_s(w))(X_s)+(dP_{t-s}f)(W_s(v,w))(X_s).
  \end{align*}
  We claim that $N_s$ is a local martingale.  Combining
  \eqref{eqn-Re1}, \eqref{eqn-Re2} and applying It\^o's formula, we
  obtain
  \begin{align*}
    dN_s(v,w)
    &=\big(\nabla_{/\!/_sdB_s}\Hess P_{t-s}f\big)(Q_s(v), Q_s(w))+(\Hess P_{t-s}f)(\frac{D}{ds}Q_s(v),Q_s(w))\,ds\\
    &\quad+(\Hess P_{t-s}f)(Q_s(v),\frac{D}{ds}Q_s(w))\,ds+\partial_s(\Hess P_{t-s}f)(Q_s(v),Q_s(w))\,ds
    \\
    &\quad+\tr\Hess(\Hess P_{t-s}f)(Q_s(v), Q_s(w))\,ds+(\nabla_{/\!/_sdB_s}d P_{t-s}f)(W_s(v,w))\\
    &\quad+(dP_{t-s}f)(DW_s(v,w))+\langle d(dP_{t-s}f), DW_s(v,w) \rangle+\partial_s(dP_{t-s}f)(W_s(v,w))\,ds\\
    &\quad+\tr\Hess (dP_{t-s}f)(W_s(v,w))\,ds\mequal 0,
  \end{align*}
  where $\mequal$ denotes equality modulo the differential of a local
  martingale, and here we used for the quadratic covariation the
  formula
  \begin{align*} [d(dP_{t-s}f), DW_s(v,w)]_s=(\Hess
    P_{t-s}f)(R^{\#,\#}(Q_s(v),Q_s(w)))\,ds.
  \end{align*}
  Therefore, $N_s$ is a local martingale. Recall that condition
  $\bf (H)$ implies
  \begin{align*}
    |Q_t|^2\leq \e^{2Kt} \quad\text{and}\quad \E|W_t(v,w)|^2\leq C \e^{(4K+2\theta)t}  \end{align*}
  for some positive constants. In view of estimate
  \eqref{Hess-pointwise-esti}, $|\Hess P_{t-s}f|$ is bounded on
  $[0,t-\varepsilon]\times M$, and
  $$|\nabla P_{t-s}f|\leq \e^{K(t-s)}\,\|\,|\nabla f|\,\|_{\infty}$$
  as a consequence of the derivative formula (see e.g. \cite[Theorem 2.2.3]{Wbook14}):
  for all $v\in T_xM$ and $x\in M$,
\begin{align}\label{derivative-formula}
	\langle (\nabla P_tf)(x),v\rangle=\E^x\left[\langle(\nabla f)(X_t),Q_tv\rangle\right].
\end{align} 
Thus $N_s$ is a martingale on the time
  interval $[0,t-\varepsilon]$, and by taking expectation at time $0$
  and $t-\varepsilon$, we arrive at
  \begin{align}\label{Hessian-formula}
    (\Hess P_tf)(v,w)=\E\Big[(\Hess P_{\varepsilon} f)(Q_{t-\varepsilon}(v),Q_{t-\varepsilon}(w))(X_t)+ dP_{\varepsilon}f(W_{t-\varepsilon}(v,w))(X_{t-\varepsilon})\Big].
  \end{align}
  Letting $\varepsilon$ tend to $0$, it then follows that
  \begin{align*}
    |\Hess P_tf(v,w)|
    & \leq \Big|\E\Big[\Hess f(Q_t(v), Q_t(w))(X_t)\Big]\Big|+ \Big|\E\Big[df(W_t(v,w))(X_t)\Big]\Big|\\
    &\le P_t\Big[|\Hess f|\,|Q_t(v)|\,|Q_t(w)|\Big]+(P_t|df|^2)^{1/2}\E\Big[|W_t(v,w)|^2\Big]^{1/2}\\
    &\le (P_t|\Hess f|^2)^{1/2}\,\e^{2Kt}+C\,\e^{(2K+\theta)t}\,(P_t|df|^2)^{1/2}.\qedhere
  \end{align*}
\end{proof}

\begin{remark}
  Let
  $$\|R\|_{\infty}:=\sup_{x\in M} |R|(x)\quad \mbox{ and} \quad
  \|\nabla \Ric^{\sharp}+d^*R\|_{\infty}:=\sup_{x\in M} |\nabla
  \Ric^{\sharp}+d^*R|(x).$$ If assumption \eqref{eqn-H} is replaced by
  \eqref{eqn-Ric} with $K\geq 0$, $\|R\|_{\infty}<\infty$ and
  $\|\nabla \Ric^{\sharp}+d^*R\|_{\infty}<\infty$, then it is
  straightforward to see from \eqref{est-Wt} that for $0<\delta<2K$,
  \begin{align*}
    \E|W_t(v,w)|^2 \leq  \e^{(2K+\delta)t} \E\left[\int_0^t\e^{(2K-\delta)s}\|R\|_{\infty}^2\,ds+\int_0^t\e^{(2K-\delta)s}\|\nabla \Ric^{\sharp}+d^*R\|_{\infty}^2\,ds\right]\leq C\e^{4Kt} t
  \end{align*}
  for some explicit constant $C>0$. Proceeding as in the proof of
  Proposition \ref{lem5} above, we obtain in this case
  \begin{align*}
    |\Hess P_tf|\leq  \e^{2Kt}\big(P_t|\Hess f|^2\big)^{1/2}+C\sqrt{t}\e^{2K t}\big(P_t|df|^2\big)^{1/2}.
  \end{align*}
\end{remark}

Proposition \ref{lem5} is used to establish pointwise Gaussian upper
bounds for the Hessian of the heat kernel via inequality
\eqref{hess-gra}. Note that Carron \cite{Carron} applied pointwise
Gaussian upper bounds for the gradient of the heat kernel to the Riesz
transform on complete manifolds whose Ricci curvature satisfies a
quadratic decay control. Here, we shall use the following Hessian heat
kernel estimate in the next section to establish the
Calder\'{o}n-Zygmund inequality.

\begin{proposition}\label{lem-hess-heat-kernel}
  Assume that \eqref{eqn-H} holds. Fix $\az\in (0,1/4)$ as in Lemma
  \textup{\ref{lem-Gausspe}} and let $\theta>0$ be as in
  \eqref{R-esti}.  Then there exist $0<\beta<2\alpha$ and $C_3>0$ such
  that
  \begin{align*}
    |\Hess_xp_{t}(x,y)|&\leq \frac{C(1+\sqrt{t})}{tV(y,\!\sqrt{t})}\exp\left(-\beta\,\frac{\rho^2(x,y)}{t}+\frac{1}{2}\big(C_{3}+\theta\big)t\right).
  \end{align*}
  for any $t>0$ and $x$, $y\in M$.
\end{proposition}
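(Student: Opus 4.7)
The plan is to combine the semigroup decomposition $p_t(x,y) = P_{t/2}\bigl(p_{t/2}(\cdot,y)\bigr)(x)$ with the Bismut-type pointwise Hessian bound from Proposition \ref{lem1}, taking there the exponent $p=2$. Working with Proposition \ref{lem1} rather than with Proposition \ref{lem5} is essential: the latter would place $|\Hess p_{t/2}(\cdot,y)|$ on the right-hand side, which is precisely the quantity to be estimated, whereas Proposition \ref{lem1} bounds the Hessian of the semigroup in terms of $f$ itself, without any derivatives.

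Applied to $f = p_{t/2}(\cdot,y)$ at time $t/2$, Proposition \ref{lem1} produces
\[
|\Hess_x p_t(x,y)| \le \frac{C(1+\sqrt{t})}{t}\,\e^{(2K+\theta)t/2}\,\bigl(P_{t/2}|p_{t/2}(\cdot,y)|^2\bigr)^{1/2}(x).
\]
To extract both the volume factor $V(y,\sqrt{t})^{-1}$ and the Gaussian decay $\e^{-\beta\rho^2(x,y)/t}$ from the right-hand side, I will rewrite the $L^2$ average as $\int p_{t/2}(x,z)\,p_{t/2}(z,y)^2\,\mu(dz)$, factor out one copy of $p_{t/2}(z,y)$ by the on-diagonal bound $\|p_{t/2}(\cdot,y)\|_\infty \le C V(y,\sqrt{t/2})^{-1}\e^{C_1 K t/2}$ coming from Lemma \ref{lem-Gausspe} with $\rho=0$, and recognise that the remaining integral equals $p_t(x,y)$ by the semigroup identity. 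A second application of Lemma \ref{lem-Gausspe}, now in its off-diagonal form, gives $p_t(x,y) \le C V(y,\sqrt{t})^{-1}\e^{-\alpha\rho^2(x,y)/t + C_1 Kt}$. Taking the square root then yields
\[
\bigl(P_{t/2}|p_{t/2}(\cdot,y)|^2\bigr)^{1/2}(x) \le \frac{C\,\e^{C' t}}{\sqrt{V(y,\sqrt{t/2})\,V(y,\sqrt{t})}}\,\e^{-\alpha\rho^2(x,y)/(2t)}.
\]

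To finish, I will use the local volume doubling \eqref{eqn-GD}, which is automatic under \eqref{eqn-Ric} via Bishop--Gromov, to bound $V(y,\sqrt{t}) \le C'\e^{C''\sqrt{t}}V(y,\sqrt{t/2})$; this converts the product $\sqrt{V(y,\sqrt{t/2})\,V(y,\sqrt{t})}$ in the denominator into $V(y,\sqrt{t})$ at the cost of an additional exponential factor in $t$. Collecting all $K$- and $t$-dependent exponential weights into a single term of the form $\e^{(C_3+\theta)t/2}$ and taking $\beta = \alpha/2 \in (0,2\alpha)$ yields the claimed pointwise estimate. The only real work in the argument is the bookkeeping for the exponents and the volume comparison; the probabilistic substance has already been encapsulated in Proposition \ref{lem1}, and the Gaussian off-diagonal tail has been packaged into Lemma \ref{lem-Gausspe}, so no structural obstacle remains.
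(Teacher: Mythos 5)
Your proof is correct, and it takes a genuinely different route from the paper's. The paper writes $\Hess_x p_{2t}(x,y)=\Hess P_t(p_t(\cdot,y))(x)$ and then applies Proposition~\ref{lem5}, which produces the terms $\bigl(P_t|\Hess p_t(\cdot,y)|^2\bigr)^{1/2}$ and $\bigl(P_t|d p_t(\cdot,y)|^2\bigr)^{1/2}$ on the right; it then inserts a Gaussian weight $\e^{\gamma\rho^2(z,y)/t}\e^{-\gamma\rho^2(z,y)/t}$ and invokes the weighted $L^2$ estimates of Lemma~\ref{lem4} and Proposition~\ref{lem-Hess} (which rest on Bochner's identity) to control those integrals, together with Lemma~\ref{lem-Gausspe} for the remaining $\sup_z\{\e^{-\gamma\rho^2(z,y)/t}p_t(x,z)\}$. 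You instead apply Proposition~\ref{lem1} with $p=2$, which bounds $|\Hess P_{t/2}f|$ directly by $\bigl(P_{t/2}|f|^2\bigr)^{1/2}$ without any derivatives of $f$, so that everything reduces to Chapman--Kolmogorov, the on- and off-diagonal heat kernel bounds of Lemma~\ref{lem-Gausspe}, and local volume doubling. Your route is shorter and bypasses the weighted $L^2$-Hessian machinery of Proposition~\ref{lem-Hess} entirely; the paper's route, by contrast, recycles estimates it needs anyway for the $p<2$ case. One remark: your justification for rejecting Proposition~\ref{lem5} as circular is not quite right. That proposition places $\bigl(P_{t}|\Hess p_t(\cdot,y)|^2\bigr)^{1/2}$ on the right-hand side, which is not the pointwise quantity being estimated but a heat-kernel-weighted $L^2$ average of $\Hess p_t$ over the spatial variable; the paper controls it non-circularly via the Bochner-type weighted $L^2$ bound of Proposition~\ref{lem-Hess}. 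So both routes are viable, yours is simply more economical.
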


\begin{proof}
  For $t>0$ and $x$, $y\in M$, write
  \begin{align*}
    \Hess_xp_{2t}(x,y)=\Hess P_t(p_t(\cdot,y))(x)
  \end{align*}
  Applying Proposition \ref{lem5} and taking $\gz\in (0,2\az)$, we
  have
  \begin{align*}
    \big|\Hess P_t(p_t(\cdot,y))(x)\big|
    &\leq  \e^{2Kt}\left(\int |\Hess_zp_t(z,y)|^2\,\e^{\gamma{\rho^2(z,y)}/t}
      \,\e^{-\gamma{\rho^2(z,y)}/t}\,|p_t(x,z)|\,\mu(dz)\right)^{1/2} \\
    &\quad+C\e^{(2K+\theta)t}\left(\int |{\nabla}_z p_t(z,y) |^2\,\e^{\gamma{\rho^2(z,y)}/t}\,\e^{-\gamma{\rho^2(z,y)}/t}\,|p_t(x,z)|\,\mu(dz)\right)^{1/2}. \notag 
  \end{align*}
  Moreover, by Lemma \ref{lem4} and Proposition \ref{lem-Hess}, we
  know that there exists $C'>0$ such that
  \begin{align*}
    \int_M \lf(|{\nabla}_zp_t(z,y)|^2\,\e^{\gamma{\rho^2(z,y)}/t}+ |\Hess_zp_t(z,y)|^2
    \,\e^{\gamma{\rho^2(z,y)}/t}\r)\,\mu(dz)\leq \frac{C(1+Kt)}{t^2V(y,\!\sqrt{t})}\,\e^{2C't}.
  \end{align*}
  This further implies that there exists a constant $C_{3}>0$ such
  that
  \begin{align*}
    \big|\Hess P_t(p_t(\cdot,y))(x)\big|
    &\leq 
      \frac{C(1+\sqrt{t})}{t\sqrt{V(y,\!\sqrt{t})}}\,\e^{((C'+2K)+\theta)t}\sup_{z\in M}
      \left\{\e^{-\gamma{\rho^2(z,y)}/{t}}|p_t(x,z)|\right\}^{1/2}\\
    &\le \frac{C(1+\sqrt{t})}{{t}{V(y,\!\sqrt{t})}}\,\e^{(C_{3}+\theta)t}
      \,\e^{-\beta {\rho^2(x,y)}/t}
  \end{align*}
  for $\beta$ small enough, where in the last inequality we have used
  the estimate
  \begin{align*}
    \sup_{z\in M}\Big\{\e^{-\gamma{\rho^2(z,y)}/t}|p_t(x,z)|\Big\}\leq \frac{\e^{C_1Kt}}{V(y,\!\sqrt{t})}\e^{-2\beta {\rho^2(x,y)}/t},
  \end{align*}
  which can be deduced from Lemma \ref{lem-Gausspe}. Thus, we conclude
  \begin{align*}
    |\Hess_xp_{2t}(x,y)|\leq \frac{C(1+\sqrt{t})}{tV(y,\!\sqrt{t})}\,\e^{(C_{3}+\theta)t}\,\e^{-\beta{\rho^2(x,y)}/t}
  \end{align*}
  which finishes the proof of Proposition \ref{lem-hess-heat-kernel}.
\end{proof}

By interpolating the $L^p$-Hessian inequality with the $L^2$-Gaffney
off-diagonal estimates, one obtains $L^p$-Gaffney off-diagonal
estimates for any $p>2$ as follows.

\begin{proposition}\label{Lp-Gaffney-estimate}
  Assume that \eqref{eqn-H} hold. Then for $p>2$ there exists
  constants $C,C_4>0$ such that for all $t\in (0,\fz)$, all Borel
  subsets $E,F\subset M$ with compact closure, and all $f\in L^p(M)$
  with {\rm supp}$f\subset E$,
  \begin{align*}
    \big\|\mathbbm{1}_Ft |\Hess P_tf|\big\|_p\leq 
    C(1+\sqrt{t})\,\e^{(2K+\theta)t}\,
    \e^{-{C_4\rho^2(E,F)}/t}\,\|f\|_p.
  \end{align*}
\end{proposition}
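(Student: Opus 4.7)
The plan is to obtain the claimed $L^p$-Gaffney estimate by interpolating the $L^2$-Gaffney bound of Lemma~\ref{lem-Gaffney} against the $L^q$-boundedness supplied by \eqref{lem1-i2} of Proposition~\ref{lem1}, for some (arbitrary) $q>p$. Fix $t>0$ and Borel sets $E,F\subset M$ of compact closure, and consider the linear operator
\begin{equation*}
T f := \mathbbm{1}_F\, t\,\Hess P_t(\mathbbm{1}_E f),
\end{equation*}
which sends scalar-valued $L^r(M)$ into $L^r$-sections of $\mathrm{Sym}^2 T^*M$, the pointwise norm being $|\cdot|$ as defined in the paper. With this notation, $\|Tf\|_r$ coincides with the quantity $\|\mathbbm{1}_F t\,|\Hess P_t f|\|_r$ appearing in the statement (for $f$ supported in $E$).

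For the two endpoint bounds, Lemma~\ref{lem-Gaffney} gives
\begin{equation*}
\|T f\|_2\leq C(1+\sqrt{t})\,\exp\!\bigl(-C_2\rho^2(E,F)/t\bigr)\,\|f\|_2,
\end{equation*}
while applying \eqref{lem1-i2} of Proposition~\ref{lem1} to $\mathbbm{1}_E f$ yields, for every $q\geq 2$,
\begin{equation*}
\|T f\|_q\leq \bigl\|\,t\,|\Hess P_t(\mathbbm{1}_E f)|\,\bigr\|_q\leq C(1+\sqrt{t})\,\e^{(2K+\theta)t}\,\|f\|_q.
\end{equation*}

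For a fixed $p>2$, choose any $q>p$ and define $\eta\in(0,1)$ by $1/p=(1-\eta)/2+\eta/q$. Since the fibres of $\mathrm{Sym}^2 T^*M$ are finite-dimensional inner product spaces, the Riesz-Thorin theorem applies in its Banach-valued form (or, equivalently, one chooses a measurable orthonormal frame of $\mathrm{Sym}^2 T^*M$ to reduce to scalar Riesz-Thorin on the product measure space $M\times\{1,\dots,N\}$) and produces
\begin{equation*}
\|Tf\|_p\leq C(1+\sqrt{t})\,\exp\!\Bigl(\eta(2K+\theta)t-(1-\eta)C_2\,\rho^2(E,F)/t\Bigr)\|f\|_p.
\end{equation*}
Bounding $\eta\leq 1$ in the factor $\e^{\eta(2K+\theta)t}$ and setting $C_4:=(1-\eta)C_2>0$ (for instance, $q=2p$ gives $\eta=(p-2)/(p-1)$ and hence $C_4=C_2/(p-1)$) yields the claimed inequality.

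No serious obstacle is expected; the only mild point is to justify the use of Riesz-Thorin for a linear operator whose target is the $L^r$-space of a Banach (in fact, finite-dimensional Hilbert) bundle, which is standard. A cautious variant would be to write $|Tf|^2$ as a sum of squares of finitely many scalar components (with respect to a measurable local orthonormal frame), apply scalar Riesz-Thorin to each component, and then recombine; this only affects the constant $C$, not the exponents $e^{(2K+\theta)t}$ or $e^{-C_4\rho^2(E,F)/t}$.
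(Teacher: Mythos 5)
Your proof is correct, but it takes a genuinely different route from the paper's. The paper does not invoke Riesz--Thorin: instead it uses the pointwise estimate \eqref{lem1-i1} with exponent $p/2$, namely $t\,|\Hess P_tf|\lesssim (1+\sqrt{t})\,\e^{(2K+\theta)t}\bigl(P_t|f|^{p/2}\bigr)^{2/p}$, raises this to the $p$-th power and integrates over $F$, which reduces the problem to an $L^2$-Gaffney off-diagonal estimate for the heat semigroup $P_t$ itself (not $\Hess P_t$) applied to the auxiliary function $|f|^{p/2}$; that $L^2$-Gaffney for $P_t$ is the standard, elementary one. Your version instead interpolates the operator $f\mapsto\mathbbm{1}_F\,t\,\Hess P_t(\mathbbm{1}_Ef)$ between the $L^2$ off-diagonal bound of Lemma~\ref{lem-Gaffney} and the global $L^q$ bound \eqref{lem1-i2}; since the target bundle is a finite-dimensional Hilbert bundle, the vector-valued Riesz--Thorin (or Stein) theorem applies without difficulty, and the endpoint constants interpolate in the usual multiplicative way. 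The two arguments buy slightly different things: the paper's pointwise-domination trick is self-contained and elementary, needing only the scalar $L^2$-Gaffney for $P_t$, whereas your interpolation argument is more modular and cleanly separates ``local smoothing'' ($L^q$ boundedness) from ``off-diagonal decay'' ($L^2$ Gaffney), at the modest cost of invoking vector-valued interpolation. In both cases the resulting constant $C_4$ degenerates as $p\to\infty$ --- in your argument $C_4=(1-\eta)C_2\to 0$ since $\eta\to 1$, and in the paper's the effective exponent is $2C_2/p$ --- so neither gives a $p$-uniform off-diagonal rate, consistent with the statement which fixes $p$. One small remark: the constants $C,\theta$ coming from Proposition~\ref{lem1} are $q$-independent (they arise from Lemma~\ref{lem-Hc} plus Cauchy--Schwarz and Jensen), so your freedom in choosing $q>p$ really only affects $C_4$, as you noted.
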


\begin{proof}
  Let $p>2$ and $t>0$. For $E,F$ and $f$ as above, by inequality
  \eqref{lem1-i1}, there exist positive constants $C$ and $C_2$ such
  that
  \begin{align*}
    \int_Ft^p|\Hess P_tf|^p(x)\,\mu(dx)
    &\leq C\e^{(2K+\theta)pt}(1+\sqrt{t})^p\int_F(P_t|f|^{p/2})^2(x)\,\mu(dx)\\
    &\leq C\e^{-{2C_2 \rho^2(E,F)}/t}\,\e^{(2K+\theta)pt}\,(1+\sqrt{t})^p \int_{E} |f|^p\,\mu(dx)
  \end{align*}
  where the last inequality follows from the $L^2$-Gaffney
  off-diagonal estimates for $P_tf$ (Lemma \ref{lem-Gaffney}),
  i.e.~the existence of positive constants $C$ and $C_2$ such that
  \begin{align*}
    \big\|\mathbbm{1}_F |P_tf^{p/2}|\,\big\|_{2}\leq C \,\e^{-{C_2 \rho^2(E,F)}/t}\left\|\mathbbm{1}_E f^{p/2}\right\|_{2}
  \end{align*}
  for all $f\in L^p(M)$ with $\text{supp} f\subset E$.
\end{proof}

\section{Calder\'on-Zygmund inequality for $p\neq 2$}\label{s3}
In this section, we prove the main results of this paper. We first
prove Theorem \ref{them-1} in Section \ref{s3.1}, namely {\bf CZ($p$)}
for $p\in (1,2)$ under the condition \eqref{eqn-Ric}; then in Section
\ref{s3.2}, we prove Theorem \ref{theorem-2}, namely, {\bf CZ($p$)}
for $p>2$ under the condition \eqref{eqn-H}. Recall that {\bf CZ($2$)}
always holds under condition \eqref{eqn-Ric}.

\subsection{The case $p\in (1,2)$}\label{s3.1}

Let $M$ be a complete Riemannian manifold satisfying \eqref{eqn-Ric}.
In this subsection, we prove Theorem \ref{them-1} and show that the
Calder\'on-Zygmund inequality holds for $p\in (1,2)$. To be precise,
let
\begin{align}\label{eqn-defT}
  T=\Hess\,(\Delta+\sz)^{-1}=\dint_0^\fz \e^{-\sz t} \Hess P_t\,dt.
\end{align}
We show that $T$ is bounded on $L^p(M)$ for any $p\in (1,2)$, provided
$\sigma>0$ is large enough. Since $T$ is already bounded on $L^2(M)$,
using interpolation, it suffices to prove that $T$ is of weak type
$(1,1)$, that is for some constant $C$,
\begin{align}\label{eqn-bdT}
  \mu\big(\{x\in M\colon |Tf(x)|>\lambda\}\big)\leq C\dfrac{\|f\|_1}{\lambda}
\end{align}
for all $\lambda>0$ and $f\in L^1(M)$. To this end, we need the
following technical lemma from \cite[Section 4]{PTTS-2004} on the
finite overlap property of $M$.

\begin{lemma}\textup{\cite{PTTS-2004}}\label{lem-fop}
  Assume that \eqref{eqn-GD} holds. There exists a countable subset
  $\mathcal{C}=\{x_j\}_{j\in \Lambda}\subset M$ such that
  \begin{itemize}
  \item [{\rm (i)}] $M=\cup_{j\in \Lambda} B(x_j,1)$;

  \item [{\rm (ii)}] $\{B(x_j,1/2)\}_{j\in\Lambda}$ are disjoint;

  \item [{\rm (iii)}] there exists $N_0\in\N$ such that for any
    $x\in M$, there are at most $N_0$ balls $B(x_j,4)$ containing $x$;

  \item [{\rm (iv)}] for any $c_0\geq 1$, there exists $C>0$ such that
    for any $j\in\Lambda$, $x\in B(x_j,c_0)$ and $r\in (0,\fz)$,
    \begin{align*}
      \mu\lf(B(x,2r)\cap B(x_j,c_0)\r)\le C \mu\lf(B(x,r)\cap B(x_j,c_0)\r) 
    \end{align*}
    and
    \begin{align*}
      \mu(B(x,r))\le C \mu\lf(B(x,r)\cap B(x_j,c_0)\r) 
    \end{align*}
    for any $x\in B(x_j,c_0)$ and $r\in (0,2c_0]$.

  \end{itemize}
\end{lemma}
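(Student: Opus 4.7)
The plan is to take $\mathcal{C}$ to be a maximal $\tfrac12$-separated net in $M$ and derive each of (i)--(iv) from maximality combined with the local doubling \eqref{eqn-GD}. Using Zorn's lemma (equivalently, a greedy exhaustion of a countable dense subset of $M$), I select a maximal family $\mathcal{C}=\{x_j\}_{j\in\Lambda}\subset M$ such that the balls $B(x_j,1/2)$ are pairwise disjoint; separability of $M$ forces $\Lambda$ to be at most countable, and (ii) is built into the construction. For (i), if some $x\in M$ satisfied $\rho(x,x_j)\geq 1$ for every $j$, then $B(x,1/2)$ would be disjoint from every $B(x_j,1/2)$, contradicting maximality; hence $M=\cup_{j\in\Lambda}B(x_j,1)$.

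For (iii), fix $x\in M$ and list the indices $j_1,\ldots,j_N$ such that $x\in B(x_{j_k},4)$. Then each $x_{j_k}$ lies in $B(x,4)$, so the disjoint balls $B(x_{j_k},1/2)$ are all contained in $B(x,5)$. Applying \eqref{eqn-GD} at $x_{j_k}$ with base radius $1/2$ and dilation factor $10$ yields $\mu(B(x_{j_k},1/2))\geq c\,\mu(B(x_{j_k},5))\geq c\,\mu(B(x,1))$ because $B(x,1)\subset B(x_{j_k},5)$. Using disjointness and \eqref{eqn-GD} once more to bound $\mu(B(x,5))\leq C\mu(B(x,1))$, one obtains $Nc\,\mu(B(x,1))\leq\mu(B(x,5))\leq C\,\mu(B(x,1))$, whence $N\leq N_0:=C/c$, a universal integer because all radii appearing are bounded by $10$.

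For (iv), I treat first the reverse-doubling inequality $\mu(B(x,r))\leq C\,\mu(B(x,r)\cap B(x_j,c_0))$ for $x\in B(x_j,c_0)$ and $r\in(0,2c_0]$. Choose a point $y$ on a minimizing geodesic from $x$ to $x_j$ at distance $\min\{r/4,\rho(x,x_j)\}$ (take $y=x_j$ if $\rho(x,x_j)<r/4$). A short triangle-inequality check shows $B(y,r/4)\subset B(x,r)\cap B(x_j,c_0)$ while $B(x,r)\subset B(y,5r/4)$; applying \eqref{eqn-GD} at $y$ (with scale $5$ and base radius $r/4$, all radii staying bounded by $5c_0/2$) yields $\mu(B(x,r))\leq C(c_0)\,\mu(B(y,r/4))\leq C(c_0)\,\mu(B(x,r)\cap B(x_j,c_0))$. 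For the first inequality of (iv), when $r\leq c_0$ one combines this reverse estimate with the ambient doubling $\mu(B(x,2r))\leq C\mu(B(x,r))$; when $r>c_0$, the chain of inclusions $B(x,2r)\supset B(x,2c_0)\supset B(x_j,c_0)$ forces $B(x,2r)\cap B(x_j,c_0)=B(x_j,c_0)$, which is comparable to $B(x,r)\cap B(x_j,c_0)$ by the same two ingredients.

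The main technical subtlety is that the exponential factor $\e^{Ctr}$ in \eqref{eqn-GD} is not an absolute constant; however, in every application above the radii involved are controlled by a fixed multiple of $c_0$ (respectively $10$ in the proof of (iii)), so these factors collapse into a constant depending only on $c_0$ and the structural parameters of \eqref{eqn-GD}. This uniformity in $j\in\Lambda$ is precisely what part (iv) demands, and it is the reason why a purely Euclidean covering argument must be upgraded with a careful bookkeeping of the local doubling in order to reach Lemma \ref{lem-fop}.
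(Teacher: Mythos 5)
Your proof is correct and takes the same approach as the cited reference \cite[Section 4]{PTTS-2004}: take a maximal $\tfrac12$-separated net, deduce the covering (i) from maximality, the finite overlap (iii) by comparing the disjoint balls $B(x_{j_k},1/2)\subset B(x,5)$ against $\mu(B(x,1))$ via the local doubling \eqref{eqn-GD}, and the relative doubling (iv) by moving the center towards $x_j$ along a minimizing geodesic to manufacture an inscribed ball inside $B(x,r)\cap B(x_j,c_0)$. The paper itself offers no proof of this lemma (it is quoted from \cite{PTTS-2004}), and your argument, including the observation that in every application of \eqref{eqn-GD} the product $tr$ stays bounded by a fixed multiple of $c_0$ so the exponential factor is harmless, is the standard one.
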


The following lemma provides the localization argument to prove
\eqref{eqn-bdT}.

\begin{lemma}\label{lem-bcT}
  Assume that \eqref{eqn-Ric} holds.  Let
  $\mathcal{C}=\{x_j\}_{j\in\Lambda}$ be a countable subset of $M$
  having finite overlap property as in Lemma
  \textup{\ref{lem-fop}}. Let $\sigma>C''$ where $C''$ is as in
  Corollary \ref{prop1}. Suppose that there exists a positive constant
  $C$ such that
  \begin{align}\label{eqn-localT}
    \mu\lf(\{x\colon  \mathbbm{1}_{B(x_j,2)}|Tf(x)|>\lambda\}\r)\leq \frac{C}{\lambda}\|f\|_1 
  \end{align}
  for any $j\in \Lambda$, $\lz\in (0,\fz)$ and
  $f\in C_0^\fz (B(x_j,1))$. Then \eqref{eqn-bdT} holds for any
  $f\in C_0^\fz(M)$.
\end{lemma}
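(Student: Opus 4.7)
\medskip
\noindent\emph{Proof plan.} This is a localization argument of Duong--Robinson type \cite{DR-96}, built around the cover from Lemma \ref{lem-fop}. I would take a smooth partition of unity $\{\varphi_j\}$ subordinate to $\{B(x_j,1)\}$, together with cutoffs $\chi_j\in C_c^\infty(M)$ satisfying $\chi_j\equiv 1$ on $B(x_j,3)$ and $\supp\chi_j\subset B(x_j,4)$. Since $M=\bigcup_j B(x_j,2)$, the starting inclusion
\begin{equation*}
\mu(\{|Tf|>\lambda\})\leq \sum_j \mu\bigl(\{x\in B(x_j,2):|Tf(x)|>\lambda\}\bigr)
\end{equation*}
combined with the splitting $Tf = T(\chi_j f)+T((1-\chi_j)f)$ reduces the problem on each ball $B(x_j,2)$ to handling a local and a far piece.

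The far piece $T((1-\chi_j)f)$ involves the kernel only at pairs $(x,y)$ with $\rho(x,y)\geq 1$, because $(1-\chi_j)f$ is supported outside $B(x_j,3)$ while $x\in B(x_j,2)$. Writing
\begin{equation*}
k_T(x,y)=\int_0^\infty e^{-\sigma t}\,\Hess_x p_t(x,y)\,dt,
\end{equation*}
Corollary \ref{prop1} together with the hypothesis $\sigma>C''$ yields
\begin{equation*}
\sup_{y\in M}\int_{\rho(x,y)\geq 1}|k_T(x,y)|\,\mu(dx)\leq C\int_0^\infty (1+\sqrt{t})\,e^{-(\sigma-C'')t}\,e^{-\beta/t}\,t^{-1}\,dt<\infty,
\end{equation*}
the factor $e^{-\beta/t}$ absorbing the $t^{-1}$ singularity at $t\to 0^+$. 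A Tonelli argument plus the finite overlap of $\{B(x_j,2)\}$ (which follows from Lemma \ref{lem-fop}(iii)) then gives
\begin{equation*}
\sum_j\int_{B(x_j,2)}|T((1-\chi_j)f)(x)|\,\mu(dx)\leq C\,\|f\|_1,
\end{equation*}
and Markov's inequality converts this into the required weak-type bound for the far piece.

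For the local piece, I would further decompose $\chi_j f=\sum_k \varphi_k(\chi_j f)$. Since $\supp(\chi_j f)\subset B(x_j,4)$, only $O(1)$ indices $k$ contribute (finite overlap again), and each summand $\varphi_k\chi_j f$ is supported in $B(x_k,1)$, so that hypothesis \eqref{eqn-localT} is applicable. I would split the domain of integration as $B(x_j,2)=\bigl(B(x_j,2)\cap B(x_k,2)\bigr)\cup\bigl(B(x_j,2)\setminus B(x_k,2)\bigr)$: on the first slice, \eqref{eqn-localT} centred at $x_k$ bounds the weak-type norm of $T(\varphi_k\chi_j f)$ by $C\|\varphi_k\chi_j f\|_1/\lambda$; on the second slice, $\rho(x,x_k)>2$ forces $\rho(x,y)>1$ for every $y\in B(x_k,1)$, so the same off-diagonal kernel bound as in the far piece again controls the contribution via Markov. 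Summing over $k$ and then over $j$, the two partition-of-unity identities together with the finite overlap of $\{B(x_j,4)\}$ collapse everything to a total of $C\|f\|_1/\lambda$.

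The principal obstacle is the bookkeeping across these two layers of decomposition: one has to ensure that the overlap constants from Lemma \ref{lem-fop} control every summation, that \eqref{eqn-localT} is invoked at the correctly recentered ball each time, and that the $L^1(\mu(dx))$-integrability of $k_T(\cdot,y)$ on $\{\rho(x,y)\geq 1\}$ is uniform in~$y$. Once these pieces fit together, the near and far estimates combine to give $\mu(\{|Tf|>\lambda\})\leq C\|f\|_1/\lambda$ for all $f\in C_0^\infty(M)$, which is \eqref{eqn-bdT} up to a harmless relabelling of $\lambda$.
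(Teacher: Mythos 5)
Your argument is correct, and it rests on exactly the same three ingredients as the paper's proof: the off-diagonal kernel estimate from Corollary \ref{prop1} (valid for $\sigma>C''$), the finite-overlap data from Lemma \ref{lem-fop}, and the local weak-type hypothesis \eqref{eqn-localT}. However, you organize the decomposition differently. The paper writes $f=\sum_j f\varphi_j$ once and then splits the \emph{output} $T(f\varphi_j)$ into $\mathbbm{1}_{2B_j}T(f\varphi_j)$ and $(1-\mathbbm{1}_{2B_j})T(f\varphi_j)$; the key pointwise bound $\sum_j|(1-\mathbbm{1}_{2B_j})(x)\varphi_j(y)|\le N_0\,\mathbbm{1}_{\{\rho(x,y)\ge 1\}}$ then collapses the far sum directly to a single integral against $|k_T|$ on $\{\rho(x,y)\ge 1\}$, while the near sum is handled by \eqref{eqn-localT} and finite overlap. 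You instead first cover the domain by $\{B(x_j,2)\}$ and split the \emph{input} into $\chi_j f+(1-\chi_j)f$; because you need $\chi_j$ to equal $1$ on $B(x_j,3)$ (so that the far piece has kernel support at distance $\ge 1$), the near piece is supported in $B(x_j,4)$, which is too wide for the hypothesis, forcing the second decomposition $\chi_jf=\sum_k\varphi_k\chi_j f$ and yet another near/far split in $k$. This double layer of bookkeeping is correct but redundant: the paper avoids it entirely by cutting in the output space rather than the input space, which lets the radius-$1$ supports of $\varphi_j$ match the hypothesis exactly. Your version costs a bit more index management (and a larger overlap constant) for essentially the same content.
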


\begin{proof}
  For $j\in \Lambda$, let $B_j:=B(x_j,1)$ and let
  $\{\varphi_j\}_{j\in\Lambda}$ be a $C^{\infty}$-partition of unity
  such that $0\le \varphi_j\le 1$ and is supported in $B_j$.  Then,
  for any $f\in C_0^{\infty}(M)$ and $x\in M$, write
  \begin{align*}
    Tf(x)= \sum_{j\in\Lambda} \mathbbm{1}_{2B_j}T(f\varphi_j)(x)+\sum_{j\in\Lambda}(1-\mathbbm{1}_{2B_j})T(f\varphi_{j})(x),
  \end{align*}
  which yields that for any $\lambda>0$,
  \begin{align*}
    \mu(\{x\colon |Tf(x)|>\lambda\})
    &\leq  \mu\bigg(\bigg\{x\colon \dsum_{j\in\Lambda} \mathbbm{1}_{2B_j}|T(f\varphi_j)(x)|>\frac{\lambda}{2}\bigg\}\bigg)
      + \mu\bigg(\bigg\{x\colon \dsum_{j\in\Lambda}(1-\mathbbm{1}_{2B_j})|T(f\varphi_j)(x)|>\frac{\lambda}{2}\bigg\}\bigg)\\
    &=:\mathrm{I}_1+\mathrm{I}_2.
  \end{align*}
  For $\mathrm{I}_1$, by Lemma \ref{lem-fop}(iii) and
  \eqref{eqn-localT}, we have
  \begin{align}\label{lem-eI1}
    \mathrm{I}_1\le  \dsum_{j\in\Lambda}\mu\bigg(\bigg\{x:  \mathbbm{1}_{2B_j}|T(f\varphi_j)(x)|>\frac{\lambda}{2N_0}\bigg\}\bigg)
    \ls\frac{1}{\lambda}\|f\|_1
  \end{align}
  as desired, where the notation $a\ls b$ means $a\leq Cb$ for some
  constant $C$.

  To bound $\mathrm{I}_2$, again by Lemma \ref{lem-fop}(iii), since
  $\varphi_j$ is supported in $B_j$, it is easy to see that
  \begin{align*}
    \dsum_{j\in\Lambda}|(1-\mathbbm{1}_{2B_j})(x)\varphi_j(y)|\leq N_0 \mathbbm{1}_{\{\rho(x,y)\geq 1\}}.
  \end{align*}
  Hence, according to the definition of $T$ in \eqref{eqn-defT} and
  Corollary \ref{prop1}, we get
  \begin{align*}
    \mathrm{I}_2
    &\leq \frac{2}{\lz} \dsum_{j\in\Lambda} \lf\|\,\big|\lf(1-\mathbbm{1}_{2B_j}\r)
      T\lf(f\varphi_j\r)\big|\,\r\|_{1}
    \\
    &\ls \frac{1}{\lz} \int_M\lf(\int_0^{\infty}\e^{-\sigma t} \int_M \lf|\Hess_xp_t(x,y)\r|\sum_{j\in\Lambda}|(1-\mathbbm{1}_{2B_j})(x)\varphi_j(y)||f(y)|\,\mu(dy)\,dt\r)\mu(dx)\\
    &\ls\frac{1}{\lz}\int_{M}\int_0^{\infty}\e^{-\sigma t} \lf(\int_{\rho(x,y)\geq 1} |\Hess_xp_t(x,y)|\,\mu(dx)\r)\,dt\, |f(y)|\,\mu(dy)\\
    &\leq \frac{1}{\lz} \int_M |f(y)|\,\mu(dy) \int_0^{\infty}\e^{-\sigma t}(1+\sqrt{t})\,\e^{C''t}\,\e^{-\beta/ t}t^{-1}\,dt,
  \end{align*}
  where $\bz\in (0,\az)$. Thus, since $\sigma >C''$, we obtain
  \begin{align*}
    \mathrm{I}_2\ls \frac{1}{\lz}\int_0^{\infty}\e^{(C''-\sigma )t
    -{\beta}/{t}}\frac{(1+\sqrt{t})}{t}\,dt \,\|f\|_1\ls \frac{1}{\lz}\|f\|_1,
  \end{align*}
  which combined with the estimate about $I_1$ in \eqref{lem-eI1}
  finishes the proof of Lemma \ref{lem-bcT}.
\end{proof}

To prove property \eqref{eqn-localT}, we remove the subscript $j$ and
simply write $B$ for each $B_j:=B(x_j,1)$.  Let $c_0\geq 1$. By Lemma
\ref{lem-fop}(iv), we have that $(c_0B,\mu,\rho)$ is a metric measure
subspace satisfying the {\it volume doubling property} that there
exists $C_D\ge 1$ such that
\begin{align}\label{eqn-D}
  \mu\lf(B(x,2r)\cap c_0B\r)\leq C_D \,\mu\lf(B(x,r)\cap c_0B\r) \tag{\bf D}
\end{align}
for all $x\in c_0B$ and $r>0$.

We also need the following Calder\'on-Zygmund decomposition from
\cite{CW-book}.

\begin{lemma}[\cite{CW-book}]\label{lem-czdecom}
  Let $(\mathcal{X},\nu,\rho)$ be a metric measure space satisfying
  \eqref{eqn-D} with $c_0B$ replaced by $\mathcal{X}$.  Let
  $f\in L^1(\mathcal{X})$ and $\lz\in (0,\fz)$. Assume
  $\|f\|_{L^1}< \lz \nu(\mathcal{X})$.  Then $f$ has a decomposition
  of the form $$\textstyle f=g+b=g+\sum_ib_i$$ such that
  \begin{enumerate}[{\rm(a)}]
  \item $g(x)\leq C\lambda$ for almost all $x\in M$;
  \item there exists a sequence of balls $B_i=B(x_i,r_i)$ so that the
    support of each $b_i$ is contained in $B_i$:
    \begin{align*}
      \int_{\mathcal{X}} |b_i(x)|\, \nu(dx)\leq C\lambda \nu(B_i)\quad \text{and}\quad
      \int_{\mathcal{X}} b_i(x)\, \nu(dx)=0;
    \end{align*}
  \item
    $\displaystyle\sum_i \nu(B_i)\leq
    \frac{C}{\lambda}\int_{\mathcal{X}} |f(x)|\, \nu(dx)$;
  \item there exists $k_0\in \mathbb{N}^*$ such that each point of $M$
    is contained in at most $k_0$ balls $B_i$.
  \end{enumerate}
\end{lemma}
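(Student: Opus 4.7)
The plan is to carry out the classical Calder\'on-Zygmund stopping-time construction, adapted to the space of homogeneous type $(\mathcal{X},\nu,\rho)$ along the lines of Coifman and Weiss. I would open with the uncentered Hardy-Littlewood maximal operator
$$Mf(x) := \sup_{B \ni x} \frac{1}{\nu(B)}\int_B |f|\,d\nu,$$
the supremum being over all balls $B$ containing $x$. A Vitali-type covering argument relying only on the doubling property \eqref{eqn-D} shows that $M$ is of weak type $(1,1)$, i.e.\ $\nu(\{Mf > \lambda\}) \leq C \lambda^{-1}\|f\|_{L^1}$.

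First, I would set $\Omega := \{x\in\mathcal{X}\colon Mf(x) > \lambda\}$ and $F := \mathcal{X}\setminus\Omega$. The hypothesis $\|f\|_{L^1} < \lambda\,\nu(\mathcal{X})$ (vacuous when $\nu(\mathcal{X}) = \infty$) forces $F \neq \emptyset$, so $\Omega$ is a proper open subset with $\nu(\Omega) \leq C \lambda^{-1}\|f\|_{L^1}$, which is already property (c). Doubling also yields the Lebesgue differentiation theorem on $\mathcal{X}$, whence $|f(x)| \leq \lambda$ for $\nu$-a.e.\ $x \in F$, which handles (a) on the complement of $\Omega$.

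Next, I would invoke a Whitney-type covering lemma on $(\mathcal{X},\nu,\rho)$, constructed by iterating Vitali's lemma at geometrically decreasing scales, to write $\Omega = \bigcup_i B_i$ with balls $B_i = B(x_i, r_i)$ such that: (i) $r_i$ is comparable to $\rho(x_i, F)$; (ii) a fixed dilation $\{c_1 B_i\}$ has bounded overlap, giving~(d); and (iii) some larger dilation $c_2 B_i$ meets $F$ at a point $y_i$. Combining $Mf(y_i) \leq \lambda$ with the doubling ratio $\nu(c_2 B_i)/\nu(B_i) \leq C$ yields the average bound
$$\frac{1}{\nu(B_i)}\int_{B_i}|f|\,d\nu \leq C\lambda.$$
Fixing a partition of unity $\{\varphi_i\}$ with $0 \leq \varphi_i \leq \mathbbm{1}_{B_i}$ and $\sum_i \varphi_i = \mathbbm{1}_\Omega$, I would then define
$$c_i := \frac{1}{\nu(B_i)}\int \varphi_i f\,d\nu,\qquad b_i := \varphi_i f - c_i\,\mathbbm{1}_{B_i},\qquad g := f\mathbbm{1}_F + \sum_i c_i\,\mathbbm{1}_{B_i}.$$
Property (a) follows from the average bound on $\Omega$ and from $|f|\leq \lambda$ a.e.\ on $F$; the support, mean-zero, and $L^1$ estimates in (b) are immediate from the definitions; and (d) is the finite-overlap property of the Whitney cover.

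The principal obstacle is the construction of the Whitney covering: absent a dyadic structure on $\mathcal{X}$, it has to be assembled by hand from Vitali's lemma applied at a sequence of scales $2^{-k}\rho(\cdot,F)$, together with the doubling inequality to pass between a ball and its nearby witness point in $F$. This ingredient is exactly the contribution of Coifman-Weiss, and is the reason the statement is cited here as a black box.
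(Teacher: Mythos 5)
The paper does not prove this lemma; it cites it directly from Coifman--Weiss (\cite{CW-book}) as a known result in the theory of spaces of homogeneous type. Your sketch is a faithful reconstruction of the standard Coifman--Weiss argument (maximal function, Vitali weak type $(1,1)$, Whitney covering of the level set, partition of unity), so it is not a divergent route; it is simply a proof the paper chose not to spell out.

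One step deserves a closer look. You claim that $\|f\|_{L^1}<\lambda\,\nu(\mathcal{X})$ ``forces $F\neq\emptyset$'' via the weak $(1,1)$ bound, but that inequality only gives $\nu(\Omega)\leq C\lambda^{-1}\|f\|_{L^1}<C\,\nu(\mathcal{X})$ with a Vitali constant $C$ that in general exceeds $1$, so when $\nu(\mathcal{X})<\infty$ it does not immediately rule out $\Omega=\mathcal{X}$. The fix is routine but should be said: run the stopping argument at the inflated threshold $\lambda'=C\lambda$, for which the hypothesis does give $\nu(\{Mf>\lambda'\})\leq\lambda^{-1}\|f\|_{L^1}<\nu(\mathcal{X})$, hence $F'=\{Mf\leq\lambda'\}\neq\emptyset$; since the conclusions (a)--(c) already carry an unspecified constant $C$, the harmless loss of a factor $C$ is absorbed. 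Alternatively one can set up the decomposition via a Christ dyadic structure, where the stopping time starts from the top cube with average $<\lambda$ and the issue does not arise. With that caveat addressed, the proposal is a correct account of the proof the paper is citing as a black box.
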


\begin{lemma}\label{lem-le}
  Assume that \eqref{eqn-Ric} holds.  Let $\lz\in (0,\fz)$ and
  $f\in L^1(B)$ be as in Lemma \textup{\ref{lem-czdecom}}. Assume that
  $\{b_i\}$ is the sequence of bad functions as in Lemma
  \textup{\ref{lem-czdecom}} and $\{P_{t}^{\sz}\}_{t\ge 0}$ the heat
  semigroup associated to $-(\Delta+\sz)$ with $\sz>0$. Then there
  exist $\sigma >0$ and $C>0$ independent of $f$ such that
  \begin{align*}
    \Big\|\sum_iP^{\sigma}_{t_i}b_i\Big\|_2^2\leq C\lambda \|f\|_1
  \end{align*}
  where $t_i=r_i^2$ with $r_i$ denoting the radius of the ball $B_i$
  as in Lemma \ref{lem-czdecom}(b).
\end{lemma}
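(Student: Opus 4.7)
The plan is to exploit almost-orthogonality of the smoothed bad functions by expanding
\begin{align*}
\Big\|\sum_i P^\sigma_{t_i}b_i\Big\|_2^2 = \sum_i \|P^\sigma_{t_i}b_i\|_2^2 + 2\sum_{(i,j)\colon t_i<t_j}\big\langle P^\sigma_{t_i}b_i,\,P^\sigma_{t_j}b_j\big\rangle,
\end{align*}
and treating the diagonal and off-diagonal contributions separately. The whole argument will require $\sigma$ to be eventually chosen large enough to dominate every exponential growth constant of the form $CK$ coming from the heat-kernel bounds of Lemma~\ref{lem-Gausspe}.

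For the diagonal part, I would use the on-diagonal heat-kernel upper bound $p_t(x,y)\le C\,\e^{C_1Kt}/V(y,\sqrt{t})$ from Lemma~\ref{lem-Gausspe}, together with the doubling comparison $V(y,r_i)\ge c\,\mu(B_i)$ for $y\in B_i$ and the bad-function bound $\|b_i\|_1\le C\lambda\,\mu(B_i)$ from Lemma~\ref{lem-czdecom}(b), to get
\begin{align*}
\|P^\sigma_{t_i}b_i\|_\infty\le \frac{C\,\e^{(C_1K-\sigma)t_i}}{\mu(B_i)}\|b_i\|_1\le C\lambda\,\e^{(C_1K-\sigma)t_i}.
\end{align*}
Combined with $L^1$-contractivity $\|P^\sigma_{t_i}b_i\|_1\le \|b_i\|_1$ and the trivial interpolation $\|u\|_2^2\le \|u\|_1\|u\|_\infty$, this yields $\|P^\sigma_{t_i}b_i\|_2^2\le C\lambda\,\|b_i\|_1$ once $\sigma>C_1K$; summation in $i$ and Lemma~\ref{lem-czdecom}(b) then give $\sum_i\|P^\sigma_{t_i}b_i\|_2^2\le C\lambda\|f\|_1$.

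For the off-diagonal sum I restrict to $t_i\le t_j$ and use self-adjointness and the semigroup law to rewrite $\langle P^\sigma_{t_i}b_i,P^\sigma_{t_j}b_j\rangle=\langle b_i,P^\sigma_{t_i+t_j}b_j\rangle$. The cancellation $\int b_i\,d\mu=0$ together with $\supp b_i\subset B_i=B(x_i,r_i)$ allows me to subtract the constant $P^\sigma_{t_i+t_j}b_j(x_i)$ inside the integral, after which a mean-value argument along a minimizing geodesic from $x_i$ to $x\in B_i$ bounds the resulting difference by $r_i$ times the supremum of $|\nabla P^\sigma_{t_i+t_j}b_j|$ along that geodesic. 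Feeding in the standard Gaussian gradient bound $|\nabla_z p_t(z,y)|\le C\,t^{-1/2}V(y,\sqrt{t})^{-1}\exp(-c\rho^2(z,y)/t+CKt)$ for the heat kernel under \eqref{eqn-Ric}, together with $r_i\le\sqrt{t_i+t_j}$ and $\|b_j\|_1\le C\lambda\,\mu(B_j)$, produces
\begin{align*}
|\langle P^\sigma_{t_i}b_i,P^\sigma_{t_j}b_j\rangle|\le C\lambda^2\mu(B_i)\,\e^{(CK-\sigma)(t_i+t_j)}\exp\!\left(-\frac{c\,\rho^2(B_i,B_j)}{t_j}\right).
\end{align*}

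The main obstacle is the absolute summability of this double series, which is where the geometry of the covering finally enters. For each fixed $j$, I would partition the indices $i$ with $t_i\le t_j$ into annular regions $\{i\colon\rho(B_i,B_j)\in[k\sqrt{t_j},(k+1)\sqrt{t_j})\}$, $k\ge 0$, bound the total volume of balls in each annulus by $V(x_j,(k+2)\sqrt{t_j})$ via the doubling inequality \eqref{eqn-gdxy} and the finite-overlap property of Lemma~\ref{lem-fop}(iii), and then sum the Gaussian series $\sum_k(k+1)^d\,\e^{-ck^2}$. After a further enlargement of $\sigma$ to absorb the remaining $\e^{CK(t_i+t_j)}$ factors, the inner sum reduces to $C\lambda\,\|b_j\|_1$, and a final summation over $j$ yields the total off-diagonal contribution $\le C\lambda\|f\|_1$, completing the proof.
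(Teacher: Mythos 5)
Your proof is correct in outline, but it takes a genuinely different route from the paper's. The paper bounds $|P^{\sigma}_{t_i}b_i(x)|$ pointwise by $\lambda$ times a Gaussian averaging kernel supported on $B_i$, then passes to the dual pairing with an arbitrary $u\in L^2$ and shows that for $y\in B_i$ the dual integral $\int_M V(x,\sqrt{t_i})^{-1}\e^{-\alpha''\rho^2(x,y)/t_i}|u(x)|\,\mu(dx)$ is dominated by $(\SM u)(y)$, the Hardy--Littlewood maximal function. The $L^2$-boundedness of $\SM$ together with the bounded overlap of the $B_i$'s (so that $\|\sum_i\mathbbm{1}_{B_i}\|_2^2\lesssim \sum_i\mu(B_i)$) then closes the argument in one stroke; no gradient estimate, no cancellation of $b_i$, and no pair-by-pair almost-orthogonality is required.

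What you do is instead a Cotlar--Stein/almost-orthogonality expansion: the diagonal is handled by $L^1$--$L^\infty$ interpolation and the on-diagonal heat-kernel bound, while the off-diagonal cross terms are controlled by using $\int b_i=0$, a mean-value bound along a geodesic, and a \emph{pointwise} Gaussian gradient bound for $p_t$. This works and is a classical approach, but observe two things. First, the pointwise gradient bound $|\nabla_z p_t(z,y)|\le Ct^{-1/2}V(y,\sqrt{t})^{-1}\e^{-c\rho^2(z,y)/t+CKt}$, while true under $\Ric\ge -K$, is not stated or proved anywhere in the paper; Lemma~\ref{lem4} only gives a weighted $L^2$ integral gradient estimate, so your argument imports an additional nontrivial ingredient that would need a citation or a short derivation (e.g.~via the Bismut gradient formula $|\nabla P_tf|\le C t^{-1/2}\e^{Kt}(P_t|f|^2)^{1/2}$ applied to $f=p_t(\cdot,y)$, exactly as the paper does for the Hessian in Proposition~\ref{lem-hess-heat-kernel}). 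Second, your annular summation needs the boundedness of the radii $r_j$ (so that $\e^{C(k+4)\sqrt{t_j}}$ is beaten by $\e^{-ck^2}$); this is fine because all $B_i$ come from a Calder\'on--Zygmund decomposition of $f\in L^1(B)$ with $B$ a unit ball, but it is worth saying explicitly. Also a small slip: you cite Lemma~\ref{lem-fop}(iii) for the finite overlap of the $B_i$'s, but that covering is a different one; the finite overlap you need is Lemma~\ref{lem-czdecom}(d). In sum, both approaches succeed; the paper's duality-plus-maximal-function route uses fewer analytic ingredients (only the heat-kernel upper bound and doubling) and avoids the delicate pair-by-pair summation, whereas yours makes the almost-orthogonal structure of $\{P^\sigma_{t_i}b_i\}$ explicit at the cost of a pointwise gradient estimate.
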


\begin{proof}
  Recall that $\supp b_i \subset B(x_i, \sqrt{t_i})$. Using the upper
  bound of the heat kernel in Lemma \ref{heat-kernel-upper-bound} and
  Lemma \ref{lem-czdecom}, we have for $x\in M$,
  \begin{align*}
    |P_{t_i}^{\sz}b_i(x)|
    &\leq \int_M \frac{\e^{-\sz' t_i-\alpha \frac{\rho^2(x,y)}{t_i}}}{V(x,\sqrt{t_i})}|b_i(y)|\,\mu(dy)\\
    & \leq  \frac{C}{V(x,\!\sqrt{t_i})} \,\e^{-\sz' t_i-\alpha'\frac{\rho^2(x,x_i)}{t_i}}\int_{B_i} |b_i(y)|\,
      \mu(dy)\\
    &\leq  C_2 \lambda \int_M \frac{\e^{-\sz' t_i-\alpha{''}\frac{{ \rho^2(x,y)}}{t_i}}}{V(x,\sqrt{t_i})}\mathbbm{1}_{B_i}(y)\,\mu(dy),
  \end{align*}
  where $\sz'=\sz-C_1K>0$ and $0<\az''<\az'<\az$.  It is therefore
  sufficient to verify that
  \begin{align}\label{lem-le1}
    \biggl\|\sum_i \int_M \frac{\e^{-\sz' t_i-\alpha'' \frac{\rho^2(\cdot, y)}{t_i}}}
    {V(\cdot, \sqrt{t_i})}\mathbbm{1}_{B_i}(y)\,\mu(dy)\biggr\|_2\ls\biggl\|\sum_i\mathbbm{1}_{B_i}\biggr\|_2,
  \end{align}
  since from this and Lemma \ref{lem-czdecom} we obtain as
  consequence, as desired,
  \begin{align*}
    \Big\|\sum_iP_{t_i}^{\sigma}b_i\Big\|^2_2\ls\lambda^2\biggl\|\sum_i\mathbbm{1}_{B_i}\biggr\|^2_2\ls \lambda^2 \sum_i \mu(B_i)\ls \lambda \|f\|_1.
  \end{align*}
  In order to prove \eqref{lem-le1}, we write by duality
  \begin{align}\label{lem-le2}
    &\biggl\|\sum_i \int_M \frac{\e^{-\sz' t_i-\alpha''\frac{\rho^2(\cdot,y)}{t_i}}}{V(\cdot,\!\sqrt{t_i})}\mathbbm{1}_{B_i}(y)\,\mu(dy)\biggr\|_2\\ \notag
    &\qquad =\sup_{\|u\|_2=1}\lf|\int_M\lf(\sum_i\int_M\frac{\e^{-\sz' t_i-\alpha'' \frac{\rho^2(x,y)}{t_i}}}{V(x,\sqrt{t_i})}\mathbbm{1}_{B_i}(y)\,\mu(dy)\r)u(x)\,\mu(dx)\r|\\ \notag
    &\qquad
      \leq \sup_{\|u\|_2=1}\int_M \sum_i\lf(\int_M \frac{\e^{-\sz' t_i-\alpha''\frac{\rho^2(x,y)}{t_i}}}{V(x,\sqrt{t_i})}|u(x)|\,\mu(dx)\r)\mathbbm{1}_{B_i}(y)\,\mu(dy).
  \end{align}
  By \eqref{eqn-gdxy}, we have for any $x\in M$ and $y\in B_i$,
  \begin{align*}
    V(y,\!\sqrt{t_i})\leq C\lf(1+\frac{\rho(x,y)}{\sqrt{t_i}}\r)^d \e^{C(t_i^{1/2}+\rho(x,y))}
    V(x,\!\sqrt{t_i})
  \end{align*}
  with $\dz\in [0,2)$. From this, we obtain that there exist
  $0<\tilde{\az}<\az'''<\az''$ such that
  \begin{align*}
    &\int_M \frac{\e^{-\sz' t_i-\alpha'' \frac{\rho^2(x,y)}{t_i}}}{V(x,\!\sqrt{t_i})}\,|u(x)|\,\mu(dx)\\
    &\hs\ls\frac{\e^{-\frac{1}{2}\sz' t_i}}{V(y,\!\sqrt{t_i})}\int_M \e^{-\alpha'''\frac{\rho^2(x,y)}{t_i}}|u(x)|\,\mu(dx)\\
    &\hs\ls\frac{1}{V(y,\!\sqrt{t_i})}\Bigg(\int_{\rho(x,y)<\sqrt{t_i}}|u(x)|\,\mu(dx)
      +\sum_{k=0}^\fz\int_{2^k\!\sqrt{t_i}\leq \rho(x,y)< 2^{k+1}\!\sqrt{t_i}}\e^{-\alpha'''\frac{\rho^2(x,y)}{t_i}}|u(x)|\,\mu(dx)\Bigg)\\
    &\hs\leq \frac{1}{V(y,\!\sqrt{t_i})}\lf(\int_{B(y,\!\sqrt{t_i})}|u(x)|\,\mu(dx)+\sum_{k=0}^\fz\e^{-\alpha''' 2^{2k}}\int_{B(y,2^{k+1}\sqrt{t_i})}|u(x)|\,\mu(dx)\r)\\
    &\hs=\lf(1+\sum_{k=0}^\fz\frac{V(y,\!2^{k+1}\sqrt{t_i})}{V(y,\!\sqrt{t_i})}\,\e^{-\alpha'''2^{2k}}\r)(\SM u)(y)\\
    &\hs\leq \left(1+C\sum_{k=0}^\fz2^{(k+1)d}\e^{C2^{k+1}\sqrt{t_i}}\e^{-\tilde{\az} 2^{2k}}\right)(\SM u)(y)\\
    & \hs\leq \left(1+C\sum_{k=0}^\fz2^{(k+1)d}\e^{C2^{k+1}}\e^{-\tilde{\az} 2^{2k}}\right)(\SM u)(y)\ls (\SM u)(y),
  \end{align*}
  where \begin{align*} (\SM
    u)(y):=\sup_{r>0}\frac{1}{V(y,r)}\int_{B(y,r)}|u(x)|\,\mu(dx)
        \end{align*}
        denotes the Hardy-Littlewood maximal function of $u$.  This
        together with \eqref{lem-le2} and the $L^2$-boundedness of
        $\SM $ gives
        \begin{align*}
          \biggl\|\sum_i\int_M \frac{\e^{-\alpha''\frac{d^2(\cdot, y)}{t_i}}}{V(\cdot,\!\sqrt{t_i})}\mathbbm{1}_{B_i}(y)\,\mu(dy)\biggr\|_2\ls
          \sup_{\|u\|_2=1}\int_M (\SM u)(y)\sum_{i}\mathbbm{1}_{B_i}(y)\,\mu(dy)
          \ls\biggl\|\sum_i\mathbbm{1}_{B_i}\biggr\|_2,
        \end{align*}
        which shows that \eqref{lem-le1} holds true and finishes the
        proof of Lemma \ref{lem-le}.
      \end{proof}


      With the help of Lemmas \ref{lem-bcT} through \ref{lem-le}, we
      are now in position to the proof of Theorem \ref{them-1}.

\begin{proof}[Proof of Theorem \ref{them-1}] 
  Recall that $T=\Hess(\Delta+\sz)^{-1}$. We choose $\sigma$ big
  enough such that $\sigma>\max\{C'', C_1K\}$, where $\tilde{\alpha}$
  is as in proof of Lemma \ref{lem-le}.  By Lemma \ref{lem-bcT}, it
  suffices to prove
  \begin{align}\label{eqn-localT-1}
    \mu\big(\{x\in 2B\colon  |Tf(x)|>\lambda\}\big)\ls \dfrac{\|f\|_1 }{\lambda}
  \end{align}
  for all $f\in C_0^\fz(B)$. By means of Lemma \ref{lem-czdecom} with
  $\mathcal{X}=B$, we deduce that $f$ has a decomposition
  $$\textstyle f=g+b=g+\sum_ib_i$$ which implies
  \begin{align}\label{eqn-them-8}
    \mu\big(\{x\in 2B: |Tf(x)|>\lambda\}\big)
    &\leq \mu\lf(\lf\{x\in 2B:|Tg(x)|>\frac{\lambda}{2}\r\}\r)+
      \mu\lf(\lf\{x\in 2B: |Tb(x)|>\frac{\lambda}{2}\r\}\r)\\ \notag
    &=:\mathrm{I}_1+\mathrm{I}_2.
  \end{align}
  Using the facts that $T$ is bounded on $L^2(M)$ and that
  $|g(x)|\leq C\lambda$, we obtain as desired
  \begin{align}\label{eqn-them-9}
    \mathrm{I}_1\ls\lambda^{-2}\|Tg\|_2^2\ls\lambda^{-2}\|g\|_2^2
    \ls\lambda^{-1}\|g\|_1\ls\lambda^{-1}\|f\|_1.
  \end{align}

  We now turn to the estimate of $\mathrm{I}_2$. Recall that
  $\{P_{t}^{\sz}\}_{t\geq 0}$ is the heat semigroup generated by
  $-(\Delta+\sz)$, that is $P_t^{\sz}=\e^{-t\sz}P_t$.  We write
  \begin{align*}
    Tb_i=TP_{t_i}^{\sz}b_i+T(I-P_{t_i}^{\sz})b_i,
  \end{align*}
  where $t_i=r_i^2$ with $r_i$ the radius of $B_i$.  By Lemma
  \ref{lem-le}, we have
  \begin{align*}
    \biggl\|\sum_iP_{t_i}^{\sz}b_i\biggr\|_2^2\ls\lambda \|f\|_1.
  \end{align*}
  This combined with the $L^2$-boundedness of $T$ yields
  \begin{align}\label{eqn-them1-1}
    \mu\left(\left\{x\in 2B\colon \lf|T\lf(\sum_i P_{t_i}^{\sz}b_i\r)(x)\r|>\frac{\lambda}{2}\right\}\right)
    \ls \frac{1}{\lambda}\|f\|_1
  \end{align}
  as desired. Consider now the term $T\sum_i(I-P_{t_i}^{\sz})b_i$. We
  write
  \begin{align}\label{eqn-them-4}
    &\mu\lf(\lf\{x\in 2B\colon\lf|T\lf(\sum_i(I-P_{t_i}^{\sz})b_i\r)(x)\r|>\frac{\lambda}2\r\}\r)\\ \notag
    &\hs\leq \sum_{i}\mu(2B_i)+\mu\lf(\lf\{x\in 2B\setminus\cup_i2B_i\colon \lf|T\lf(\sum_i(I-P_{t_i}^{\sz})b_i\r)\r|(x)>\frac{\lambda}{2}\r\}\r).
  \end{align}
  From Lemma \ref{lem-czdecom}, it follows that
  \begin{align}\label{eqn-them1-2}
    \sum_{i}\mu(2B_i)\ls\frac{\|f\|_1}{\lambda}
  \end{align}
  as desired. To estimate the second term, let $k_{t_i}^{\sz}(x,y)$
  denote the integral kernel of the operator $T(I-P_{t_i}^{\sz})$.
  Note that
  \begin{align*}
    (\Delta+\sz)^{-1}(I-P_{t_i}^{\sz})
    &=\int_0^{+\infty}(P_{s}^{\sz}-P_{t_i+s}^{\sz})\,ds=\int_0^{t_i}P_{s}^{\sz}\,ds
  \end{align*}
  and
  \begin{align*}
    T(I-P_{t_i}^{\sz})&=\Hess(\Delta+\sigma)^{-1}(I-P_{t_i}^{\sz})=\int_0^{t_i}\Hess P_{s}^{\sz}\,ds.
  \end{align*}
  Therefore,
  \begin{align}\label{eqn-kernelKt}
    k_{t_i}^{\sz}(x,y)=\int_0^{t_i}\Hess_xp_s^{\sz}(x,y)\,ds,
  \end{align}
  where $p_s^{\sigma}=\e^{-\sz s}p_s$ is the kernel of $P_s^{\sz}$
  with respect to $\mu$.  Since $b_i$ is supported in $B_i$, we have
  \begin{align}\label{eqn-them-3}
    \int_{2B\setminus (2B_i)}
    \lf|T\lf((I-P_{t_i}^{\sz})b_i\r)(x)\r|\,\mu(dx)
    &\leq 
      \int_{2B\setminus (2B_i)}\lf(\int_{B_i}|k_{t_i}^{\sz}(x,y)||b_i(y)|\,\mu(dy)\r)\,\mu(dx)\\ \notag
    &\leq \int_{B_i}\lf(\int_{\rho(x,y)\geq t_i^{1/2}}|k_{t_i}^{\sz}(x,y)|\,\mu(dx)\r)|b_i(y)|\,\mu(dy).
  \end{align}
  Now by means of \eqref{eqn-kernelKt} and Corollary \ref{prop1}, we
  get
  \begin{align*}
    \int_{\rho(x,y)\geq t_i^{1/2}}|k_{t_i}^{\sz}(x,y)|\,\mu(dx)
    &\leq 
      \int_0^{t_i}\lf(\int_{\rho(x,y)\geq t_i^{1/2}}|\Hess_xp_{s}(x,y)|\,\mu(dx)\r)\e^{-s\sigma}\,ds\\
    &\leq C\int_0^{t_i} \e^{-\beta t_i/s}s^{-1}\,\e^{C''s}\,\e^{-s\sigma}(1+\sqrt{s})\,ds\\
    &\leq C\int_0^1\frac{\e^{-\beta/u}}{u}\,du<\fz,
  \end{align*}
  where for the last inequality we use the fact that
$$\e^{s(C''-\sigma) }(1+\sqrt{s})<\fz,\quad s\in (0,\fz).$$ 
The estimate above together with \eqref{eqn-them-3} and Lemma
\ref{lem-czdecom} implies that
\begin{align}\label{eqn=them-6}
  \mu\lf(\lf\{x\in 2B\setminus\cup_i2B_i\colon
  \lf|T\lf(\sum_i(I-P_{t_i}^{\sz})b_i\r)(x)\r|>\frac{\lambda}{2}\r\}\r)
  \ls \frac{\|f\|_1}{\lz}.
\end{align}
Altogether, combining \eqref{eqn-them-8} through \eqref{eqn-them1-1},
\eqref{eqn-them1-2} and \eqref{eqn=them-6}, we conclude that
\eqref{eqn-localT-1} holds which completes the proof of Theorem
\ref{them-1}.
\end{proof}

\subsection{The case $p\in (2,\infty)$}\label{s3.2}
Let $M$ be a complete Riemannian manifold satisfying \eqref{eqn-H}.
In this subsection, we prove Theorem \ref{theorem-2} and show that the
Calder\'on-Zygmund inequality {\bf CZ}$(p)$ holds for all
$p\in(2,\fz)$, that is,
\begin{align}\label{eqn-Tpg2}
  \lf\|T f\r\|_{p}\ls \|f\|_p
\end{align}
holds for any $f\in L^p(M)$ with $T$ as in \eqref{eqn-defT} and
$\sigma\in (2K+\theta,\infty)$ where $K,\theta$ are respectively in
\eqref{eqn-Ric} and \eqref{R-esti}.

To this end, let $w$ be a $C^{\infty}$ function on $[0,\infty)$
satisfying $0\leq w\leq 1$ and
\begin{align*}
  w(t)=\begin{cases}1\ & \text{on} \  [0,3/4],\\
    0 \ & \text{on} \ [1,\fz),
  \end{cases}
\end{align*}
and let $\wz{T}$ be an operator defined by
\begin{align}\label{eqn-wzT}
  \wz{T}f:=\int_0^{\infty}v(t)\Hess P_tf\,dt
\end{align}
with $v(t):=w(t)\e^{-\sigma t}$. We need the following lemma, which
reduces \eqref{eqn-Tpg2} to a time and spatial localized version.

\begin{lemma}\label{lem-tpg2}
  Assume that \eqref{eqn-H} holds.  Let $p\in (2,\infty)$ and
  $\{x_j\}_{j\in \Lambda}$ be a countable subset of $M$ having the
  finite overlap property as in Lemma \textup{\ref{lem-fop}}. If there
  exists a positive constant $C$ such that
  \begin{align}\label{eqn-wTfb}
    \lf\| \, |\wz T (f)|\, \r\|_{L^p(B(x_j,4))}\le C \lf\|f\r\|_{L^p(B(x_j,1))}
  \end{align}
  holds for any $j\in \Lambda$ and $f\in C_0^\fz(B(x_j,1))$ with
  $\wz T$ defined as in \eqref{eqn-wzT}, then \eqref{eqn-Tpg2} holds.

\end{lemma}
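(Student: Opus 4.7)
The plan is to split $T=\wz T+(T-\wz T)$, control the tail $T-\wz T$ directly via the global $L^p$-Hessian bound of Proposition \ref{lem1}, and then deduce the $L^p$-boundedness of $\wz T$ from the local hypothesis \eqref{eqn-wTfb} by patching together estimates on the cover $\{B(x_j,1)\}_{j\in\Lambda}$ provided by Lemma \ref{lem-fop}.

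Since $w\equiv1$ on $[0,3/4]$, one has $T-\wz T=\int_{3/4}^{\infty}(1-w(t))\,\e^{-\sigma t}\Hess P_t\,dt$, so Minkowski's inequality combined with \eqref{lem1-i2} yields
\begin{align*}
\|(T-\wz T)f\|_p\leq C\int_{3/4}^{\infty}\e^{-(\sigma-2K-\theta)t}\,\frac{1+\sqrt t}{t}\,dt\cdot\|f\|_p,
\end{align*}
which is finite because $\sigma>2K+\theta$. It therefore suffices to prove $\|\wz T f\|_p\ls\|f\|_p$. For this, fix a smooth partition of unity $\{\varphi_k\}_{k\in\Lambda}$ subordinate to $\{B(x_k,1)\}$ with $0\leq\varphi_k\leq1$, so that $\wz T f=\sum_k\wz T(\varphi_k f)$ and each $\varphi_k f$ is supported in $B(x_k,1)$. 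By the finite overlap property of Lemma \ref{lem-fop}(iii),
\begin{align*}
\|\wz T f\|_p^p\leq\sum_{j\in\Lambda}\|\wz T f\|_{L^p(B(x_j,1))}^p\leq\sum_{j\in\Lambda}\Bigl(\sum_{k\in\Lambda} a_{jk}\Bigr)^p,\qquad a_{jk}:=\|\wz T(\varphi_k f)\|_{L^p(B(x_j,4))}.
\end{align*}

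To control $a_{jk}$, split $B(x_j,4)=\bigl(B(x_j,4)\cap B(x_k,4)\bigr)\cup\bigl(B(x_j,4)\setminus B(x_k,4)\bigr)$. On the intersection, applying \eqref{eqn-wTfb} at the centre $x_k$ gives $\|\wz T(\varphi_k f)\|_{L^p(B(x_k,4))}\leq C\|\varphi_k f\|_p$. On the complement, the support of $\varphi_k f$ is at distance at least $\max\{3,\rho(x_j,x_k)-5\}$ from $B(x_j,4)\setminus B(x_k,4)$, so the $L^p$-Gaffney bound of Proposition \ref{Lp-Gaffney-estimate} applied on $(0,1]$, together with the elementary estimate $\int_0^1 t^{-1}\e^{-A/t}\,dt\ls\e^{-A}$ for $A\geq1$, produces off-diagonal decay of the form $a_{jk}\leq C\,c_{jk}\|\varphi_k f\|_p$ with
\begin{align*}
c_{jk}:=\mathbbm{1}_{\{\rho(x_j,x_k)<8\}}+\exp\bigl(-c\,\rho(x_j,x_k)^2\bigr)
\end{align*}
for some $c>0$, after absorbing the bounded factor $(1+\sqrt t)\,\e^{(2K+\theta)t}$ that is integrated against $v(t)$.

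It remains to run a Schur argument on $\ell^p(\Lambda)$. By the disjointness of $\{B(x_k,1/2)\}$ (Lemma \ref{lem-fop}(ii)) and the volume comparison \eqref{eqn-gdxy}, the number of centres $x_k$ with $\rho(x_j,x_k)\in[n,n+1)$ is at most $C(n+1)^d\,\e^{C(n+1)}$, uniformly in $j$; hence $\sup_j\sum_k c_{jk}<\infty$ and, by symmetry, $\sup_k\sum_j c_{jk}<\infty$. Schur's test on $\ell^p$ therefore yields
\begin{align*}
\sum_{j\in\Lambda}\Bigl(\sum_{k\in\Lambda} a_{jk}\Bigr)^p\ls\sum_{k\in\Lambda}\|\varphi_k f\|_p^p\leq\int_M|f|^p\sum_{k\in\Lambda}\varphi_k\,d\mu=\|f\|_p^p,
\end{align*}
where the last equality uses $\varphi_k^p\leq\varphi_k$ together with $\sum_k\varphi_k=1$. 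Combined with the tail reduction, this gives $\|Tf\|_p\ls\|f\|_p$, that is, \eqref{eqn-Tpg2}. The main technical obstacle I anticipate is the uniform geometric count of centres in annular shells required by Schur's conditions; this is precisely what the disjointness statement in Lemma \ref{lem-fop}(ii) together with the local doubling bound \eqref{eqn-GD} provides.
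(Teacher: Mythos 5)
Your proposal is correct and accomplishes the same goal, but it takes a genuinely different route from the paper. After the identical reduction to $\wz T$ via Proposition~\ref{lem1}, the paper keeps a \emph{single} partition-of-unity index~$j$ and splits each term $\wz T(g\varphi_j)$ in the $x$-variable into a ``near'' part $\chi_j\wz T(g\varphi_j)$ supported in $4B_j$ and a ``far'' part $(1-\chi_j)\wz T(g\varphi_j)$. The far part is absorbed into the uniform bound $\sum_j|(1-\chi_j)(x)\varphi_j(y)|\le N_0\mathbbm{1}_{\{\rho(x,y)\ge3\}}$ and estimated by inserting Gaussian weights and invoking the \emph{pointwise} Hessian heat-kernel bound of Proposition~\ref{lem-hess-heat-kernel}; the near part is treated by duality, pairing with $h\in L^{p'}$ and exploiting $\sum_j\|h\chi_j\|_{p'}^{p'}\ls\|h\|_{p'}^{p'}$. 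You instead form the doubly-indexed matrix $a_{jk}=\|\wz T(\varphi_k f)\|_{L^p(B(x_j,4))}$, obtain off-diagonal decay in $\rho(x_j,x_k)$ from the $L^p$-Gaffney estimate (Proposition~\ref{Lp-Gaffney-estimate}) rather than the pointwise kernel bound, and close with a discrete Schur test on~$\ell^p(\Lambda)$, using the disjointness of the half-radius balls plus \eqref{eqn-gdxy} to control the Schur row/column sums. Both routes rely on the same two ingredients --- the local hypothesis to handle the diagonal and a Gaussian-type off-diagonal estimate to handle the tail --- but your Schur formulation is more elementary in that it avoids duality entirely, at the price of a somewhat heavier bookkeeping over the index pair~$(j,k)$ and a geometric counting lemma (which, as you correctly identify, is exactly what Lemma~\ref{lem-fop}(ii) together with~\eqref{eqn-GD} supplies). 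The decay rate you extract, $\e^{-c\,\rho(x_j,x_k)^2}$, comfortably dominates the polynomial-times-exponential growth of the annular counts, so the Schur sums converge and the argument is watertight.
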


\begin{proof}
  By the fact that $w\equiv1$ on $[0,3/4]$, we obtain from Proposition
  \ref{lem1} that if $\sigma> 2K+\theta$, then for any $g\in L^p(M)$,
  \begin{align*}
    \Big\|\int_0^{\infty}(1-w(t))\,\e^{-\sigma t}|\Hess P_{t}g|\,dt\Big\|_{p}\ls 
    \int_{3/4}^{\infty}\e^{(2K+\theta-\sigma)t}\, \frac{1+\sqrt{t}}{t}dt\, \|g\|_{p}\ls \|g\|_p.
  \end{align*}
  This and \eqref{eqn-wzT} imply that to prove \eqref{eqn-Tpg2}, it
  suffices to show that
  \begin{align}\label{eqn-wTfb-1}
    \lf\| \,|\wz T (g)|\, \r\|_{p}\ls \lf\|g\r\|_{p}
  \end{align}
  for any $g\in C^\fz_0(M)$.

  Let $(x_j)_{j\in \Lambda}$ be a countable subset of $M$ having the
  finite overlap property as in Lemma \ref{lem-fop}.  Let $\{\fai\}_j$
  be a corresponding $C^{\infty}$ partition of unity such that
  $0\leq \varphi_j\leq 1$ and $\fai_j$ is supported in
  $B_j:=B(x_j,1)$. Let $\chi_j$ be the characteristic function of the
  ball $4B_j$. For any $g\in C_0^{\infty}(M)$ and $x\in M$, write
  \begin{align}\label{eqn-wztq}
    \wz{T}g(x)\leq \sum_{j\in \Lambda} \chi_j\wz{T}(g\varphi_j)(x)+\sum_{j\in \Lambda}(1-\chi_j)\wz{T}(g\varphi_j)(x)=:{\rm I}(x)+{\rm II}(x).
  \end{align}
  By Lemma \ref{lem-fop}, we know
  \begin{align*}
    \sum_{j\in \Lambda}\lf|(1-\chi_j)(x)\varphi_j(y)\r|\leq N_0 \mathbbm{1}_{\{\rho(x,y)\geq 3\}}.
  \end{align*}
  Hence, by H\"{o}lder's inequality, we have
  \begin{align*}
    {\rm II}(x)
    &\leq \int_0^1 \int_M \lf|\Hess_xp_t(x,y)\r|\lf(\sum_{j\in \Lambda}\lf|(1-\chi_j)(x)\varphi_j(y)\r|\r)|g(y)|\,\mu(dy)\,dt\\
    &\leq N_0\int_0^1 \int_{\rho(x,y)\geq 3}\lf|\Hess_xp_t(x,y)||g(y)\r|\,\mu(dy)\,dt\\
    &\ls\int_0^1\left(\int_M\big|t\Hess_x
      p_t(x,y)\big|^p\,\e^{\gamma{\rho^2(x,y)}/{t}}
      \left(V(y,\!\sqrt{t})\right)^{p/p'}|g(y)|^p\,\mu(dy)\right)^{1/p}\frac{\e^{-{c}/{t}}}{t}\,
      dt,
  \end{align*}
  where $c={\gamma p'}/{p}$ and $\gz$ is a positive constant small
  enough so that in view of Proposition \ref{lem-hess-heat-kernel} and
  an argument similar to the proof of \eqref{eqn-ele1}, it implies
  that for any $t\in (0,1)$,
  \begin{align*}
    \int_M |t\Hess_xp_t(x,y)|^p\,\e^{\gamma\frac{\rho^2(x,y)}{t}}\,\mu(dx)\ls \frac{1}{\left(V(y,\!\sqrt{t})\right)^{p-1}}.
  \end{align*}
  This immediately implies
  \begin{align}\label{eqn-wztqp}
    &\int_M |{\rm II}(x)|^p\,\mu(dx)\\ \notag
    &\ls\int_M \left[\int_0^1\left(\int_M |t\Hess_x p_t(x,y)|^p\,\e^{\gamma{\rho^2(x,y)}/{t}}V(y,\!\sqrt{t})^{p/p'}|f(y)|^p\,\mu(dy)\right)^{1/p}
      \frac{\e^{-c/t}}{t}\,dt\right]^p\,\mu(dx)\\ \notag
    &\ls\int_M \int_0^1 \left(\int_M |t\Hess_x p_t(x,y)|^p\,\e^{\gamma{\rho^2(x,y)}/{t}}\,V(y,\!\sqrt{t})^{p/p'}|f(y)|^p\,\mu(dy)\right)\,dt\,\mu(dx)\\ \notag
    &\qquad\times\left(\int_0^1\frac{\e^{-{cp'}/{t}}}{t^{p'}}\,dt\right)^{p/p'}\\ \notag
    &\ls \int_0^1\lf[\int_M[V(y,\!\sqrt{t})]^{p/p'}|f(y)|^p\lf(\int_M |t\Hess_x p_t(x,y)|^p\,\e^{\gamma{\rho^2(x,y)}/{t}}\,\mu(dx)\r)\,\mu(dy)\r]\,dt \\ \notag
    &\ls \int_M |f(y)|^p\mu(dy)
  \end{align}
  as desired.

  Now we estimate $\mathrm{I}(x)$. Using Lemma \ref{lem-fop}, we know
  that the balls $\{4B_j\}_{j\in \Lambda}$ are of uniform overlap and
  hence
  \begin{align*}
    \sum_j \|h\chi_j\|_{p'}^{p'}\ls \|h\|_{p'}^{p'}
  \end{align*}
  for all $h\in C_0^\fz(M)$. Since $g\fai_j\in C_0^\fz (B(x_j,1))$ and
  using \eqref{eqn-wTfb}, we conclude that
  \begin{align*}
    \lf|\int_Mh(x)\mathrm{I}(x)\,\mu(dx)\r|
    &\le 
      \int_M|h(x)|\Big|\sum_j\chi_j\wz{T}(g\varphi_j)(x)\Big|\,\mu(dx)\\
    &\ls \sum_j \|g\varphi_j\|_p\|h\chi_j\|_{p'}\ls \|g\|_{p}\|h\|_{p'}, 
  \end{align*}
  which together with \eqref{eqn-wztq} and \eqref{eqn-wztqp} implies
  \eqref{eqn-Tpg2}, and hence finishes the proof of Lemma
  \ref{lem-tpg2}.
\end{proof}

From Lemma \ref{lem-tpg2}, it is easy to see that to prove $T$ is of
strong type $(p,p)$ for $p>2$, it suffices to show that $\wz T$ is
bounded from $L^p(B_j,\mu)$ to $L^p(4B_j,\mu)$ for each $j\in \Lambda$
as in \eqref{eqn-wTfb}. To this end, we need an $L^p$ local bounded
criterion from \cite{PTTS-2004} via maximal functions.  Recall that
the {\it local maximal function } by
\begin{align}\label{eqn-localMF}
  (\SM_{\rm loc}f)(x):=\sup_{\substack{B\ni x \\
  r(B)\leq 32}}\frac{1}{\mu(B)}\int_B |f|\,d\mu,\quad x\in M,
\end{align}
for any locally integrable function $f$ on $M$.  From \eqref{eqn-GD},
it follows that $\SM_{\rm loc}$ is bounded on $L^p(M)$ for all
$1<p\leq \infty$.  For a measurable subset $E\subset M$, the {\it
  maximal function relative to $E$ } is defined by
\begin{align}\label{eqn-relaMF}
  (\SM_Ef)(x):=\sup_{B\text{ ball in }  M,\,B\ni x} \frac{1}{\mu(B\cap E)}\int_{B\cap E}|f|\,d\mu,
  \quad x\in E,
\end{align}
for any locally integrable function $f$ on $M$. If in particular $E$
is a ball with radius $r$, it is enough to consider balls $B$ with
radii not exceeding $2r$. It is also easy to see $\SM_E$ is of weak
type $(1,1)$ and $L^p(M)$-bounded for $1<p\leq \infty$ if $E$
satisfies the {\it relative doubling property}, namely, if there
exists a constant $C_E$ such that for all $x\in E$ and $r>0$,
\begin{align}\label{RD}
  \mu(B(x,2r)\cap E)\leq C_E\,\mu(B(x,r)\cap E).
\end{align}
Note that in Lemma \ref{lem-fop} (iv), for any $j\in \Lambda$, the
subsets $4B_j$ satisfy the relative doubling property \eqref{RD} with
a constant independent of $j$.

The following theorem is essential to the proof of Theorem
\ref{theorem-2}.

\begin{lemma}\label{theorem-add-2}
  Let $p\in (2,\infty)$ and assume that \eqref{eqn-GD} holds. For any
  ball $B$ of $M$ centered in $4B_j$ with radius less than 8, assume
  that
  \begin{enumerate}[\rm(i)]
  \item there exists an integer $n$ depending only on condition
    \eqref{eqn-GD} such that the map
    $f\rightarrow \SM^{\#}_{4B_j,\tilde{T},n}f$ is bounded from
    $L^p(B_j,\mu)$ to $L^p(4B_j,\mu)$ with operator norm independent
    of $j$, where
    \begin{align*}
      \SM^{\#}_{4B_j,\tilde{T},n}f(x):=\dsup_{B \text{ \rm ball in }M,\, B\ni x}\lf(\frac{1}{\mu(B\cap 4B_j)}\dint_{B\cap 4B_j}
      \lf|\wz{T}(I-P_{r^2})^nf(y)\r|^2\,\mu(dy)\r)^{1/2};
    \end{align*}
  \item for all $k\in \{1,2,\ldots,n\}$, and all $f\in L^2(M,\mu)$
    supported in $B_j$, there exists a sublinear operator $S_j$
    bounded from $L^p(B_j,\mu)$ to $L^p(4B_j,\mu)$ with operator norm
    independent of $j$ such that for $x\in B\cap 4B_j$,
    \begin{align}\label{eq2}
      \left(\frac{1}{\mu(B\cap 4B_j)}\int_{B\cap 4B_j}|\wz{T}P_{kr^2}f|^p\,d\mu\right)^{1/p}
      \leq C\left(\SM_{4B_j}(|\wz{T}f|^2)+(S_jf)^2\right)^{1/2}(x)
    \end{align}
    where $\SM_{4B_j}$ is as in \eqref{eqn-relaMF}.
  \end{enumerate}
  Then $\wz T$ is bounded from $L^p(B_j,\mu)$ to $L^p(4B_j,\mu)$, that
  is \eqref{eqn-wTfb} holds with a constant depending on $p$, the
  doubling constant $C_D$ in \eqref{eqn-D}, the operator norm of
  $\wz{T}$ on $L^2(M,\mu)$, the operator norms of
  $\SM^{\#}_{4B_j,\tilde{T},n}$ and $S_j$ on $L^p$, and the constant
  in \eqref{eq2}.
\end{lemma}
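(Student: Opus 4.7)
The plan is to follow the localized sharp-maximal-function and good-$\lambda$ framework developed in \cite{PTTS-2004}. Fix the index $j$ and write $E:=4B_j$; by Lemma \ref{lem-fop}(iv), $E$ satisfies the relative doubling property \eqref{RD}, so $\SM_E$ is of weak type $(1,1)$ and bounded on $L^q(E,\mu)$ for all $q\in(1,\infty]$. The key algebraic identity, obtained by expanding $I-(I-P_{r^2})^n$ via the binomial formula, is
\begin{equation*}
  \wz Tf = \wz T(I-P_{r^2})^n f + \sum_{k=1}^n \binom{n}{k}(-1)^{k+1}\,\wz T P_{kr^2}f,
\end{equation*}
valid for every $r>0$. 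On any ball $B$ of radius $r\leq 8$ centered in $E$, the first summand is controlled by $\SM^{\#}_{E,\wz T,n}f$ as in hypothesis~(i), while hypothesis~(ii) applied term by term controls the $n$ remaining summands on $B\cap E$ in terms of $\SM_E(|\wz Tf|^2)^{1/2}$ and $S_jf$.

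The core step is to establish a good-$\lambda$ inequality: there exist $\gamma_0>0$ and $C_0>0$, depending only on the doubling constant of $E$, on $n$, on the constant appearing in (ii) and on the operator norm of $\wz T$ on $L^2(M,\mu)$, such that for all $\gamma\in(0,\gamma_0)$ and $\lambda>0$,
\begin{align*}
  & \mu\bigl(\{x\in E\colon\SM_E(|\wz Tf|^2)^{1/2}(x)>2\lambda,\ (\SM^{\#}_{E,\wz T,n}f+S_jf)(x)\leq\gamma\lambda\}\bigr) \\
  &\qquad\leq C_0\,\gamma^2\,\mu\bigl(\{x\in E\colon \SM_E(|\wz Tf|^2)^{1/2}(x)>\lambda\}\bigr).
\end{align*}
To prove it, perform a Whitney-type decomposition of the superlevel set $\Omega_\lambda:=\{\SM_E(|\wz Tf|^2)^{1/2}>\lambda\}\cap E$ into balls $\{B_\ell\}_\ell$ centered in $E$ with radii $r_\ell\leq 8$ and bounded overlap; then on each $B_\ell$ apply the binomial identity with $r=r_\ell$, estimate $\wz T(I-P_{r_\ell^2})^nf$ using the definition of $\SM^{\#}_{E,\wz T,n}f$, and estimate the remaining $\wz TP_{kr_\ell^2}f$ via (ii). The stopping condition at the boundary of each Whitney ball guarantees $\SM_E(|\wz Tf|^2)^{1/2}\lesssim\lambda$ there, and combined with the smallness assumption $\SM^{\#}_{E,\wz T,n}f+S_jf\leq\gamma\lambda$ inside $B_\ell$ this produces the claimed $\gamma^2$ factor via Chebyshev applied to the $L^2$-average that defines $\SM^{\#}_{E,\wz T,n}$.

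Granted the good-$\lambda$ inequality, multiplying by $p\lambda^{p-1}$ and integrating over $\lambda\in(0,\infty)$ yields, after absorbing a small fraction of the left-hand side,
\begin{equation*}
  \|\SM_E(|\wz Tf|^2)^{1/2}\|_{L^p(E)}\leq C\bigl(\|\SM^{\#}_{E,\wz T,n}f\|_{L^p(E)}+\|S_jf\|_{L^p(E)}\bigr).
\end{equation*}
Since $|\wz Tf|\leq\SM_E(|\wz Tf|^2)^{1/2}$ $\mu$-almost everywhere on $E$ by Lebesgue's differentiation theorem, combining this with hypotheses (i) and (ii) yields $\|\wz Tf\|_{L^p(E)}\lesssim\|f\|_{L^p(B_j)}$ with a constant independent of $j$, which is precisely \eqref{eqn-wTfb}. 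The two principal technical obstacles are the construction of the Whitney decomposition in the localized setting, where only balls centered in $E$ with radii at most $8$ are admissible (forcing a slight adaptation of the standard Calder\'on--Zygmund decomposition), and the a priori finiteness of $\|\SM_E(|\wz Tf|^2)^{1/2}\|_{L^p(E)}$ needed for the absorption step, which can be secured by a preliminary truncation argument exploiting the global $L^2$-boundedness of $\wz T$.
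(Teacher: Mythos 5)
The paper's own proof of this lemma is not a proof at all but a direct citation: it invokes \cite[Theorem~2.4]{PTTS-2004}, taking $E_1=B_j$, $E_2=4B_j$ and $I-A_r=(I-P_{r^2})^n$, and notes that the hypotheses of that abstract theorem are exactly the hypotheses (i) and (ii) of the present lemma. Your proposal, by contrast, reconstructs the proof of that cited black-box theorem from scratch. You correctly identify the essential ingredients of the argument of Auscher--Coulhon--Duong--Hofmann: the binomial splitting $I=(I-P_{r^2})^n+\sum_{k=1}^n\binom nk(-1)^{k+1}P_{kr^2}$, the localized sharp maximal function framework relative to $E=4B_j$ (relying on the relative doubling property from Lemma~\ref{lem-fop}(iv)), a Whitney--Calder\'on--Zygmund decomposition of the superlevel set $\{\SM_E(|\wz Tf|^2)^{1/2}>\lambda\}$, a good-$\lambda$ inequality, integration in $\lambda$ with an absorption step, and finally the pointwise bound $|\wz Tf|\le \SM_E(|\wz Tf|^2)^{1/2}$ a.e.\ from Lebesgue differentiation. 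This is structurally the right route and, once the technical obstacles you name are worked out, yields a self-contained proof; the trade-off relative to the paper is that you re-derive a result the authors are content to use as a black box.

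One point that needs repair before the sketch can be called complete: the good-$\lambda$ inequality cannot be run with a fixed jump factor of $2$ and a smallness factor $\gamma^2$. On each Whitney ball one splits $\wz Tf=\wz T(I-P_{r^2})^nf+\wz TA_{r^2}f$; Chebyshev on the $L^2$ average of the first piece (controlled by $\SM^{\#}_{E,\wz T,n}f\le\gamma\lambda$) produces a $\gamma^2$ contribution, but Chebyshev on the $L^{p_0}$ average of the second piece (controlled by hypothesis~(ii) and the stopping-time bound $\SM_E(|\wz Tf|^2)(\bar x)\le\lambda^2$ at a nearby point) produces only a $K^{-p_0}$ contribution, where $K$ is the jump. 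The correct statement is of the form
\begin{equation*}
  \mu\bigl(\{\SM_E(|\wz Tf|^2)^{1/2}>K\lambda,\ \SM^{\#}_{E,\wz T,n}f+S_jf\le\gamma\lambda\}\bigr)
  \le \bigl(c_1\gamma^2+c_2K^{-p_0}\bigr)\,\mu\bigl(\{\SM_E(|\wz Tf|^2)^{1/2}>\lambda\}\bigr),
\end{equation*}
and one must first take $K$ large and then $\gamma$ small for the absorption to succeed, which is also why the abstract theorem requires $2<p<p_0$ (here $p_0$ is the exponent appearing in~\eqref{eq2}). With $K=2$ fixed, the right-hand side does not tend to zero as $\gamma\to0$ and the integration/absorption step breaks down. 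This is a fixable slip, and you explicitly flag the details as needing care, but it is worth recording precisely because the statement of the lemma as given uses exponent $p$ in~\eqref{eq2}, and the correct invocation of \cite[Theorem~2.4]{PTTS-2004} requires that exponent to be strictly larger than the target $p$.
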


\begin{proof}
  We use the following result \cite[Theorem 2.4]{PTTS-2004}:

  \noindent{\it Let
  $p_0\in (2,\infty]$ and $(M,\mu,\rho)$ be a measured metric  
  space. Suppose that $T$ is a bounded sublinear operator which is
  bounded on $L^2(M,\mu)$, and let $\{A_r\}_{r>0}$, be a family of
  linear operators acting on $L^2(M,\mu)$. Let $E_1$ and $E_2$ be two
  subsets of $M$ such that $E_2$ has the relative doubling property,
  $\mu(E_2)<\infty$ and $E_1\subset E_2$. Assume that
  \begin{enumerate}[\rm(i)]
  \item the sharp maximal functional $ \SM_{E_2,T,A}^{\#}$ 
    is bounded from $L^p(E_1,\mu)$ into $L^p(E_2,\mu)$ for all
    $p\in (2,p_0)$, where
    \begin{align*}
      &(\SM^{\#}_{E_2,T,A}f)^2(x)=\sup_{B \ \text{ball in}\ M,\ B\ni x}\frac{1}{\mu(B\cap E_2)}\int_{B\cap E_2}|T(I-A_{r(B)})f|^2\,d\mu
    \end{align*}
    for $x\in E_2$;
  \item for some sublinear operator $S$ bounded from $L^p(E_1,\mu)$
    into $L^p(E_2, \mu)$ for all $p\in (2,p_0)$,
    \begin{align}\label{TA-2}
      \left(\frac{1}{\mu(B\cap E_2)}\int_{B\cap E_2}|TA_{r(B)}f|^{p_0}\,d\mu\right)^{1/{p_0}}
      \leq C\left(\SM_{E_2}(|Tf|^2)+(S(f)^{2}\right)^{1/2}(x),
    \end{align}
    for all $f\in L^2(M,\mu)$ supported in $E_1$, all balls $B$ in $M$
    and all $x\in B\cap E_2$, where $r(B)$ is the radius of $B$.
  \end{enumerate}
  If $2<p<p_0$ and $Tf\in L^p(E_2,\mu)$ whenever $f\in L^p(E_1,\mu)$,
  then $T$ is bounded from $L^p(E_1,\mu)$ into $L^p(E_2,\mu)$ and its
  operator norm is bounded by a constant depending only on the
  operator norm of $T$ on $L^2(M,\mu)$, $C_{E_2}$ (see in \eqref{RD}
  for $E_2$), $p$ and $p_0$, the operator norms of
  $\SM^{\#}_{4B_j,\tilde{T},n}$ and $S$ on $L^p$, and the constant in
  \eqref{TA-2}.}

Here we may take $E_1$ and $E_2$ as $B_j$ and $4B_j$ respectively, as
the sets $B_j$ and $4B_j$ have the relatively volume doubling property
as in \eqref{RD} with the constant $C_E$ independent of $j$ (see Lemma
\ref{lem-fop}). Moreover, taking the operators $\{A_r\}_{r>0}$ such
that
\begin{align*}
  I-A_r=(I-P_{r^2})^n
\end{align*}
for some integer $n$ sufficiently large, then the result follows
directly.
\end{proof}

Now it suffices to check (i) and (ii) of Lemma \ref{theorem-add-2}. We
establish two technical lemmas which verify (i) and (ii)
respectively. To this end, observe that \eqref{eqn-GD} implies: for
all $r_0>0$ there exists $C_{r_0}$ such that for all $x\in M$,
$r\in (0,r_0)$,
\begin{align*}
  V(x,2r)\leq C_{r_0} V(x,r).
\end{align*}
An easy consequence of the definition is that for all $y\in M$,
$0<r<8$ and $s\geq 1$ satisfying $sr<32$,
\begin{align}\label{local-doubling}
  V(y,sr)\leq Cs^{D_{L}} V(y,r),
\end{align}
for some constants $C$ and $D_{L}>0$. The following lemma plays an
important role, when checking (i) of Lemma \ref{theorem-add-2}.

\begin{lemma}\label{lem-term1}
  Assume that \eqref{eqn-H} hold.  Then there exists an integer $n$
  such that the inequality
  \begin{align}\label{eq-Hess-1}
    \dsup_{B \, ball\ in\  M,\,B\ni x}\lf(\frac{1}{\mu(B\cap 4B_j)}\dint_{B\cap 4B_j}
    \lf|\wz{T}(I-P_{r^2})^nf(y)\r|^2\,\mu(dy)\r)^{1/2}
    \leq C \left(\SM_{\rm loc}(|f|^2)(x)\right)^{1/2}
  \end{align}
  holds for any $x\in 4B_j$, $f\in L^2(4B_j)$ satisfying
  $\supp f\subset 4B_j$, where $\SM_{\rm loc}$ is defined by
  \eqref{eqn-localMF}.
\end{lemma}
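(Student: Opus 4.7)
The plan is to follow a dyadic annular decomposition in the spirit of \cite{PTTS-2004}. Fix $x\in 4B_j$ and a ball $B=B(y_0,r)$ with $y_0\in 4B_j$, $r\leq 8$ and $x\in B$. I would decompose
\begin{align*}
f = f_0 + \sum_{k\geq 1} f_k,\qquad f_0 := f\mathbbm{1}_{4B\cap 4B_j},\qquad f_k := f\mathbbm{1}_{(2^{k+2}B\setminus 2^{k+1}B)\cap 4B_j},
\end{align*}
and then handle each contribution separately; only finitely many $f_k$ are non-zero since $\supp f\subset 4B_j$.

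For the local piece $f_0$, I would combine $L^2$-boundedness of $\wz T$---which follows from {\bf CZ($2$)} via \eqref{eqn-wzT} since the definition of $v$ truncates at $t=1$---with the $L^2$-contractivity of $(I-P_{r^2})^n$. Applying local doubling \eqref{local-doubling} and the relative doubling of $4B_j$ from Lemma \ref{lem-fop}(iv), one obtains
\begin{align*}
\frac{1}{\mu(B\cap 4B_j)}\int_{B\cap 4B_j}\lf|\wz T(I-P_{r^2})^n f_0\r|^2 d\mu \ls \frac{\|f_0\|_2^2}{\mu(B\cap 4B_j)}\ls \SM_{\rm loc}(|f|^2)(x).
\end{align*}

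For the far pieces $f_k$, $k\geq 1$, the next step would be to establish an $L^2$ Gaffney-type estimate
\begin{align*}
\lf\|\mathbbm{1}_{B\cap 4B_j}\,\wz T(I-P_{r^2})^n f_k\r\|_2\ls \e^{-c\,4^k}\|f_k\|_2
\end{align*}
by expanding $(I-P_{r^2})^n=\sum_{\ell=0}^n(-1)^\ell\binom{n}{\ell}P_{\ell r^2}$, applying Lemma \ref{lem-Gaffney} to each $\Hess P_{t+\ell r^2}$ between $\supp f_k$ and $B\cap 4B_j$ (whose mutual distance is $\gtrsim 2^k r$), and integrating against $v(t)\,dt$ on $[0,1]$. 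Combined with $\|f_k\|_2^2\ls 2^{k D_L}\mu(B)\SM_{\rm loc}(|f|^2)(x)$ from local doubling \eqref{local-doubling} and with $\mu(B\cap 4B_j)\gtrsim \mu(B)$, the series $\sum_{k\geq 1}2^{kD_L}\e^{-c\,4^k}$ converges, yielding \eqref{eq-Hess-1}.

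The hard part will be ensuring that the Gaffney-type decay is uniform in $r\in(0,8]$, in particular for small $r$. If one treats the $(n{+}1)$ binomial terms individually, the ratio $4^k r^2/(t+\ell r^2)$ entering the exponent of Lemma \ref{lem-Gaffney} can become tiny when $r$ is small, destroying the exponential decay. The remedy is to exploit the cancellation across the binomial sum: writing $(I-P_{r^2})^n=\lf(-\int_0^{r^2}\Delta P_s\,ds\r)^n$ (equivalently, applying the composition rule of \cite[Lemma 2.3]{HM03} to the full operator $\Hess P_t(I-P_{r^2})^n$) supplies an extra factor of order $(r^2/(t+r^2))^n$ that tames the small-$t$ and small-$r$ regime. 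Choosing $n$ large enough, depending only on the local doubling exponent $D_L$, then furnishes the integer $n$ in the statement.
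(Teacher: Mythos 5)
Your strategy coincides with the paper's: dyadic annuli around $B$, $L^2$-boundedness for the inner piece, an off-diagonal estimate for the far pieces, and the crucial cancellation of $(I-P_{r^2})^n$ to handle small $r$. The paper implements the cancellation differently: it writes $\wz{T}(I-P_{r^2})^n f_i = \int_0^\infty g_r(t)\,\Hess P_t f_i\,dt$ with $g_r(t)=\sum_{k=0}^n\binom{n}{k}(-1)^k\mathbf{1}_{\{t>kr^2\}}v(t-kr^2)$, and bounds $g_r$ by the $n$-th finite difference of $v=w\,\e^{-\sigma\cdot}$ (so $|g_r|\ls r^{2n}$ on $t\gtrsim (n+1)r^2$, and $g_r\equiv 0$ for $t>1+nr^2$), whereas you propose $(I-P_{r^2})^n=\big(\int_0^{r^2}\Delta P_s\,ds\big)^n$ plus the HM03 composition rule, which would additionally require a Gaffney estimate for $\Hess\Delta^n P_s$ not explicitly stated in the paper. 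One inconsistency you should repair: once the cancellation remedy is in force, the decay per annulus is \emph{not} exponential $\e^{-c4^k}$ but polynomial of order $4^{-kn}$ (the Gaussian factor $\e^{-c4^kr^2/s}$ is traded against the $s^{-(n+1)}$ singularity via $\e^{-u}\ls u^{-n}$, exactly as in the paper's estimate of $\int_0^\infty|g_r(t)|(1+\sqrt{t})\,\e^{-\alpha'4^ir^2/t}\frac{dt}{t}\ls 4^{-in}$). Consequently the relevant series is $\sum_k 2^{kD_L/2}\,4^{-kn}$, which converges precisely when $2n>D_L/2$ — this is the true source of the constraint on $n$. The series $\sum_k 2^{kD_L}\e^{-c4^k}$ you wrote converges for every $n$, which would make the choice of $n$ vacuous and contradicts your (correct) closing remark that $n$ must be large relative to $D_L$.
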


\begin{proof}
  Viewing the left-hand side of \eqref{eq-Hess-1} as the maximal
  function relative to $4B_j$, since the radius of $4B_j$ is $4$, it
  is enough to consider balls $B$ of radii not exceeding $8$.  Let
  $B=B(x_0,r)$ be an arbitrary ball containing $x$ and satisfying
  $x_0\in 4B_j$ and $r\in (0,8)$. By Lemma \ref{lem-fop}, we know that
  \begin{align}\label{B-control}
    \mu(B)\ls \mu(B\cap 4B_j)
  \end{align}
  with an implicit constant independent of $B$ and $j$. Hence, it is
  easy to see
  \begin{align*}
    \frac{1}{\mu(B\cap 4B_j)}\int_{B\cap 4B_j} \big|\wz{T}(I-P_{r^2})^nf\big|^2\,d\mu\ls \frac{1}{\mu(B)}\int_{B}\big|\wz{T}(I-P_{r^2})^nf\big|^2\,d\mu.
  \end{align*}
  Thus, we only need to show that
  \begin{align}\label{eqn-lem-term1-1}
    \dsup_{B\ \text{ball in}\ M,\, B\ni x}\lf(\frac{1}{\mu(B)}\dint_{B}
    \lf|\wz{T}(I-P_{r^2})^nf(y)\r|^2\,\mu(dy)\r)^{1/2}
    \ls  \left(\SM_{\rm loc}(|f|^2)(x)\right)^{1/2},
  \end{align}
  for any $x\in 4B_j$.  Since $r<8$, we choose $i_r\in \Z_+$
  satisfying
  \begin{align}\label{def-ir}
    2^{i_r}r\leq 8< 2^{i_r+1}r.
  \end{align}
  Denote by
  \begin{align}\label{C-B}
    C_i\ \text{ the annulus}\  2^{i+1}B\setminus (2^iB)\ \  \ \text{if}\  i\geq 2\, \text{
    and} \, C_1=4B. 
  \end{align}
  Using the fact $\supp f\subset 4B_j\subset 2^i B$ when $i>i_r$, we
  find that
  \begin{align*}
    f=\dsum_{i=1}^{i_r} f\mathbbm{1}_{C_i}=:\dsum_{i=1}^{i_r} f_i
  \end{align*}
  which then implies
  \begin{align}\label{eqn-lem-term1-2}
    \lf\|\,| \wz{T}(I-P_{r^2})^nf|\,\r\|_{L^2(B)}\leq \dsum_{i=1}^{i_r}\lf\|\,|\wz{T}(I-P_{r^2})^nf_i|\,\r\|_{L^2(B)}.
  \end{align}
  For $i=1$ we use the $L^2$-boundedness of $\wz{T}(I-P_{r^2})^n$ to
  obtain
  \begin{align}\label{eqn-lem-term1-3}
    \lf\|\,|\wz{T}(I-P_{r^2})^nf_1|\,\r\|_{L^2(B)}\leq \|f\|_{L^2(4B)}\leq \mu(4B)^{1/2}(\SM_{\rm loc}(|f|^2)(x))^{1/2}
  \end{align}
  as desired.  For $i\geq 2$, we infer from \eqref{eqn-wzT} that
  \begin{align*}
    \wz{T}(I-P_{r^2})^nf_i
    &=\int_0^{\infty}v(t)\Hess(P_t(I-P_{r^2})^nf_i)\,dt\\
    &=\dint_0^\fz v(t)\dsum_{k=0}^n \binom nk (-1)^k \Hess\,P_{t+kr^2}(f_i)\,dt \\ 
    &=\dint_0^\fz \lf(\dsum_{k=0}^n \binom nk (-1)^k \mathbf{1}_{\{t>kr^2\}} v(t-kr^2)\r)\Hess\, P_{t}(f_i)\,dt\\
    &=:\int_0^{\infty}g_r(t)\,\Hess P_tf_i\,dt.
  \end{align*}
  For $g_r$, according to the definition $v(t)=w(t)\e^{-\sigma t}$ and
  an elementary calculation, we observe that
  \begin{align*}
    \begin{cases}
      |g_r(t)|\ls 1 \quad  &\mbox{for}\ \  0<t \leq \ (1+nr^2)\wedge (1+n)r^2,\\
      |g_r(t)| \ls r^{2n} \quad  &\mbox{for}\ \   (1+nr^2)\wedge (1+n)r^2<t \leq 1+nr^2,\\
      g_r(t)=0\quad & \mbox{for}\ \ t > 1+nr^2,
    \end{cases}
  \end{align*}
  where the second estimate comes from $w^{(i)}(t)\ls 1$ for
  $t\in [0,1]$ and $i\in \{1,\ldots,n\}$, along with
  \begin{align*}
    \Bigg|\sum_{k=0}^n\binom nk (-1)^k w(t-kr^2) \Bigg|\leq C_n \sup_{u\geq \frac{t}{n+1}}\Big|(w(u)\,\e^{-\sigma u})^{(n)}\Big|r^{2n}\ls r^{2n}
  \end{align*}
  for all $t\geq (1+n)r^2$ and some constant $C_n$.  Combined with
  Lemma \ref{lem-Gaffney}, this gives
  \begin{align*}
    \lf\|\,\big|\wz{T}(I-P_{r^2})^nf_i\big|\,\r\|_{L^2(B)}\ls \left(\int_0^{\infty}|g_r(t)|(1+\sqrt{t})\,\e^{-{\alpha'4^ir^2}/{t}}\,\frac{ dt}{t}\right)\|f_i\|_{L^2(C_i)}
  \end{align*}
  where using the fact that $0<r<8$, we know
  \begin{align*}
    &\int_0^{\infty}(1+\sqrt{t})|g_r(t)|\,\e^{-\frac{\alpha'4^ir^2}{t}}\,\frac{dt}{t}\\
    &\ls  \int_{0}^{(1+nr^2)\wedge (1+n)r^2}(1+\sqrt{t})\,\e^{-\frac{\alpha' 4^i r^2}{t}}\frac{dt}{t}+\int_{(1+nr^2)\wedge (1+n)r^2}^{1+nr^2}(1+\sqrt{t})r^{2n}\,\e^{-\frac{\alpha' 4^i r^2}{t}}\frac{dt}{t} \\
    &\leq C_n \left(\int_0^{(n+1)r^2}\e^{-\frac{\alpha' 4^i r^2}{t}}\frac{dt}{t} +\int_{(1+nr^2)\wedge (1+n)r^2}^{1+nr^2}(1+\sqrt{t})r^{2n}\frac{t^{n-1}}{4^{in}r^{2n}}\, {dt}\right)\\ 
    &\leq C_n\left(4^{-in}+ 4^{-in}r^{2n}(1+\sqrt{r})\right)\leq C_n'4^{-in}. 
  \end{align*}
  Now an easy consequence of the local doubling
  \eqref{local-doubling}, since $r(2^iB)\leq 8$ when
  $1\leq i\leq i_r$, is that
  \begin{align*}
    \mu(2^{i+1}B)\leq C2^{(i+1)D_{L}}\mu(B),
  \end{align*}
  with constants $C$ and $D_{L}$ independent of $B$ and
  $i$. Therefore, as $C_i\subset 2^{i+1}B$,
  \begin{align*}
    \|f\|_{L^2(C_i)}\leq \mu(2^{i+1}B)^{1/2}(\SM_{\rm loc}(|f|^2)(x))^{1/2}\leq C2^{iD_{L}/2}\mu(B)^{1/2}(\SM_{\rm loc}(|f|^2)(x))^{1/2}.
  \end{align*}
  Using the definition of $i_r$, $r\leq 8$, and by choosing
  $2n>D_{L}/2$, we finally obtain
  \begin{align*}
    &\big\|\,|\wz{T}(I-P_{r^2})^nf|\,\big\|_{L^2(B)}\leq C'\left(\sum_{i=1}^{i_r}2^{i(D_{L}/2-2n)}\right)\mu(B)^{1/2}(\SM_{\rm loc}(|f|^2)(x))^{1/2},
  \end{align*}
  which proves Proposition \ref{lem1}.
\end{proof}

The following lemma is essential to the proof of part (ii) of Lemma
\ref{theorem-add-2}.

\begin{lemma}\label{lem-small-radii}
Let $p\in (2,\infty)$. Assume that \eqref{eqn-H} holds. For a ball
  $B$ with radius $r\in (0,8)$, let $i=i_r$ be an integer such that
  \eqref{def-ir} holds. Then the following estimate hold: For 
   any $C^2$-function $f$ supported in $C_i$ as in
  \eqref{C-B}, and each $k\in \{1,\ldots, n\}$, where
  $n\in \mathbb{N}$ is chosen according to Lemma
  \textup{\ref{lem-term1}}, one has
  \begin{align}
    &\left(\frac{1}{\mu(B)}\int_B|\Hess P_{kr^2}f|^p\,d\mu\right)^{1/p}\label{Hess-f-esti1}\\
    &\quad\leq C\e^{-\alpha_1 4^i}\left[\left(\frac{1}{\mu(2^{i+1}B)}\int_{C_i}|\nabla f|^2\,d\mu\right)^{1/2}+\left(\frac{1}{\mu(2^{i+1}B)}\int_{C_i}|\Hess f|^2\,d\mu\right)^{1/2}\right]\notag  \\
    \intertext{and}
    &\left(\frac{1}{\mu(B)}\int_B|\Hess P_{kr^2}f|^p\,d\mu\right)^{1/p}\leq   \frac{C\e^{-\alpha_2 4^i}}{r^2} \left(\frac{1}{\mu(2^{i+1}B)}\int_{C_i}| f|^2\,d\mu\right)^{1/2}  \label{Hess-f-esti2}   
  \end{align} 
  for some positive constants $C$ and $\alpha_1$, $\alpha_2$ depending
  on $K$,$\theta$, $p$ and the constants in \eqref{eqn-GD}.  
\end{lemma}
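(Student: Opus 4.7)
The plan is to deduce both estimates from a pointwise upper bound on $|\Hess P_{kr^2}f|(x)$ for $x\in B$ that is essentially independent of $x$, exploiting the geometric separation $\rho(x,y)\ge 2^{i-1}r$ available for $x\in B$ and $y\in C_i$ whenever $i\ge 2$; the case $i=1$ contributes only a harmless constant, since then $C_1=4B$ and the factor $\e^{-\alpha_1\cdot 4}$ can be absorbed into the implicit constant. Since $t=kr^2\le 64n$ is bounded, every prefactor of the form $\e^{(2K+\theta)t}$ or $\e^{C_1Kt}$ appearing in Propositions \ref{lem1}, \ref{lem5} and Lemma \ref{lem-Gausspe} is dominated by a constant depending only on $K,\theta,n$.

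The key auxiliary estimate to establish is that, for any $h\ge 0$ supported in $C_i$,
\[
(P_{kr^2}h^2)(x)\le \frac{C}{\mu(B)}\,\e^{-\alpha'4^i}\,\|h\|_{L^2(C_i)}^2\qquad (x\in B),
\]
for some $\alpha'>0$. To prove this, I would insert the Gaussian bound $p_{kr^2}(x,y)\le CV(y,\sqrt{kr^2})^{-1}\,\e^{-\alpha\rho^2(x,y)/(kr^2)}$ from Lemma \ref{lem-Gausspe}, split the exponent $\alpha\rho^2/t$ into two equal halves so that one half is at least $\alpha\,4^{i-1}/(2n)$ (providing the Gaussian decay) and the other is harmless, and then bound the volume denominator via \eqref{eqn-gdxy}: since $y\in 2^{i+1}B$ with $2^{i+1}r\le 16$, one has $V(y,\sqrt{kr^2})\gtrsim \mu(B)/2^{iD_L}$. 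The polynomial loss $2^{iD_L}$ is swallowed by $\e^{-\alpha\,4^{i-1}/(2n)}$ at the price of slightly shrinking the exponent, yielding a uniform $\alpha'>0$.

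With this kernel bound in hand, estimate \eqref{Hess-f-esti1} follows by applying Proposition \ref{lem5} to get
\[
|\Hess P_{kr^2}f|(x)\le C\bigl((P_{kr^2}|\Hess f|^2)^{1/2}(x)+(P_{kr^2}|\nabla f|^2)^{1/2}(x)\bigr),
\]
inserting the off-diagonal estimate with $h=|\Hess f|$ and $h=|\nabla f|$, and then taking the $L^p(B)$ norm of the resulting $x$-uniform bound. Rewriting $\mu(B)^{-1/2}=\bigl(\mu(2^{i+1}B)/\mu(B)\bigr)^{1/2}\mu(2^{i+1}B)^{-1/2}$ and using local doubling to bound the volume ratio by $C\cdot 2^{iD_L/2}$, this extra polynomial is once more absorbed into the Gaussian to produce the desired factor $\e^{-\alpha_1 4^i}$. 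Estimate \eqref{Hess-f-esti2} is obtained in exactly the same way, the only change being that inequality \eqref{lem1-i1} of Proposition \ref{lem1} applied at exponent $p=2$ supplies $|\Hess P_{kr^2}f|(x)\le Cr^{-2}(P_{kr^2}|f|^2)^{1/2}(x)$, which accounts for the prefactor $1/r^2$ on the right-hand side.

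The main technical point I expect to encounter is the polynomial-versus-Gaussian bookkeeping: the decay $\e^{-\alpha\,4^{i-1}/(2n)}$ has to absorb two independent polynomial factors in $2^i$, one coming from comparing $V(y,\sqrt{kr^2})$ with $\mu(B)$ and one from comparing $\mu(B)$ with $\mu(2^{i+1}B)$. This succeeds uniformly in $i$ because $4^i=(2^i)^2$ beats every $2^{iC}$, and the doubling dimension $D_L$ is controlled thanks to $2^{i+1}r\le 16$. Once this bookkeeping is arranged, both inequalities follow from essentially the same computation, with no further assumption beyond \eqref{eqn-H} required.
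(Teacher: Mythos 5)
Your proposal is correct and follows essentially the same route as the paper: first \eqref{Hess-f-esti1} from Proposition~\ref{lem5} combined with the Gaussian heat-kernel bound of Lemma~\ref{lem-Gausspe} and the local volume comparison \eqref{eqn-gdxy}, then \eqref{Hess-f-esti2} by the same mechanism but starting from \eqref{lem1-i1} of Proposition~\ref{lem1} at exponent $2$. The only cosmetic difference is that you normalize the intermediate kernel estimate by $\mu(B)$ and trade it for $\mu(2^{i+1}B)$ afterwards, whereas the paper compares $V(y,\sqrt{k}\,r)$ with $\mu(2^{i+1}B)$ directly in one step; the polynomial-vs-Gaussian bookkeeping you flag is exactly what the paper handles via the factors $2^{d(i+2)}\e^{C2^{i+2}}$.
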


\begin{proof}
  We first observe that Lemma \ref{lem5} yields
  \begin{align}\label{Hess-ineq}
    \left(\int_B|\Hess P_{t}f|^p\,d\mu\right)^{1/p}
    & \leq \e^{2Kt}\left(\int_B \Big(P_t|\Hess f|^2\Big)^{p/2}(x)\,\mu(dx)\right)^{1/p}\notag\\
    &\quad +C \e^{(2K+\theta)t}\left(\int_B  \left(P_t|df|^2\right)^{p/2}(x)\,\mu(dx)\right)^{1/p}.
  \end{align}
  We substitute $t=kr^2$ in estimate~\eqref{Hess-ineq}. By Lemma
  \ref{lem-Gausspe}, one has the upper bound of $p_t(x,y)$,
  \begin{align*}
    p_t(x,y)\leq \frac{C_{\alpha}}{V(y,\!\sqrt{t})}\exp{\left(-\alpha\frac{\rho^2(x,y)}{t}+C_1Kt\right)}
  \end{align*}
  for all $x,y\in M$. Since $r\leq 8$, it follows from the above
  estimate that for all $x\in B$,
  \begin{align*}
    \left|P_{kr^2}(|df|^2)(x)\right|
    &\leq C\int_{C_i}V(y,\sqrt{k}r)^{-1}\exp\left(-\alpha\frac{\rho^2(x,y)}{kr^2}+C_1Kkr^2\right)|df|^2(y)\,\mu(dy)\\
    &\leq C\e^{-{\alpha}4^i/k}\int_{C_i}V(y,\sqrt{k}r)^{-1}|df|^2(y)\,\mu(dy).
  \end{align*}
  Moreover, since $y\in C_i$, we have
  $2^{i+1}B\subset B(y, 2^{i+2}r)$, and then by \eqref{eqn-GD}, we
  know that
  \begin{align*}
    \frac{1}{V(y,\sqrt{k}r)}\leq \frac{2^{d(i+2)}\e^{C2^{i+2}}}{V(y,2^{i+2}r)}\leq \frac{2^{d(i+2)}\e^{C2^{i+2}}}{\mu(2^{i+1}B)}.
  \end{align*}
  It then follows that
  \begin{align*}
    |P_{kr^2}|df|^2(x)|\leq C\e^{-c4^i}2^{d(i+2)}\e^{C2^{i+2}}\left(\frac{1}{\mu(2^{i+1}B)}\int_{C_i}|\nabla f|^2\,d\mu\right)
  \end{align*}
  for all $x\in B$, and there exists $\alpha_1<c$ such that
  \begin{align}\label{gradient-ineq-esti}
    \left(\frac{1}{\mu(B)}\int_{B}(P_{kr^2}|\nabla f|^2)^{p/2}\,d\mu\right)^{1/p}\leq C\e^{-\alpha_1 4^i}\left(\frac{1}{\mu(2^{i+1}B)}\int_{C_i}|\nabla f|^2\,d\mu\right)^{1/2}.
  \end{align}
  With similar arguments, we obtain
  \begin{align*}
    \left(\frac{1}{\mu(B)}\int_{B}(P_{kr^2}|\Hess f|^2)^{p/2}\,d\mu\right)^{1/p}\leq C\e^{-\alpha_1 4^i}\left(\frac{1}{\mu(2^{i+1}B)}\int_{C_i}|\Hess f|^2\,d\mu\right)^{1/2}.
  \end{align*}
  Altogether these yields
  \begin{align*}
    &\left(\frac{1}{\mu(B)}\int_B|\Hess P_{kr^2}f|^p\,d\mu\right)^{1/p}\\
    &\leq C \e^{-\alpha_1 4^i}\left(\left(\frac{1}{\mu(2^{i+1}B)}\int_{C_i}|\nabla f|^2\,d\mu\right)^{1/2}+\left(\frac{1}{\mu(2^{i+1}B)}\int_{C_i}|\Hess f|^2\,d\mu\right)^{1/2}\right),
  \end{align*}
  which completes the proof of \eqref{Hess-f-esti1}.
  
Next we observe from Proposition \ref{lem1} that
\begin{align}\label{Hess-ineq2}
      \left(\int_B|\Hess P_{t}f|^p\,d\mu\right)^{1/p}
    &\leq\frac{(1+\sqrt{t})}{t}\e^{(2K+\theta)t}\left(\int_B \Big(P_t|f|^2\Big)^{p/2}(x)\,\mu(dx)\right)^{1/p}.
    \end{align}
    Substituting $t=kr^2$ in \eqref{Hess-ineq2}, as $r\in (0,8)$,
    there exists a constant $C$ such that
    \begin{align*}
      \left(\int_B|\Hess P_{kr^2}f|^p\,d\mu\right)^{1/p}
      & \leq \frac{C}{r^2}\left(\int_B \Big(P_{kr^2}|f|^2\Big)^{p/2}(x)\,\mu(dx)\right)^{1/p}.
    \end{align*}
    With similar arguments as for \eqref{gradient-ineq-esti}, we then
    complete the proof of \eqref{Hess-f-esti2}.\qedhere
\end{proof}

With the help of the Lemmas \ref{theorem-add-2}, \ref{lem-term1} and \ref{lem-small-radii},
we are now in position to turn to the proof of Theorem \ref{theorem-2}.

\begin{proof}[Proof of Theorem \ref{theorem-2}]

  By Lemma \ref{theorem-add-2}, we only need to show (i) and (ii) of
  Lemma \ref{theorem-add-2} hold true under our condition
  \eqref{eqn-H}. We first verify (i) of Lemma
  \ref{theorem-add-2}. Observe from Lemma \ref{lem-term1} that there
  is an integer $n$ depending only on $D_{L}$ as in
  \eqref{local-doubling} such that for all $f\in L^p(B_j)$ and
  $x\in 4B_j$,
  \begin{align*}
    \sup_{ B \,\text{ball in}\, M, B\ni x} \frac{1}{\mu(B\cap 4B_j)}\int_{B\cap 4B_j} |\wz{T}(I-P_{r^2})^nf|^2\,d\mu\leq C\SM_{\rm loc}(|f|^2)(x).
  \end{align*}
  Recall that $\SM_{\rm loc}$ is bounded on $L^p(M)$ for
  $1<p\leq \infty$; thus for all $p>2$, the operator
  $\SM^{\#}_{4B_j,\tilde{T},n}$ is bounded from $L^p(B_j,\mu)$ to
  $L^p(4B_j,\mu)$ uniformly in $j$, i.e.  assertion (i) is proved.

  Next, we prove (ii) of Lemma \ref{theorem-add-2}.  Assume that
  $f\in L^2(B_j)$ and let $h=\int_0^{\infty}v(t)P_tf\,dt$ with $v$
  defined as in \eqref{eqn-wzT}. Since $\wz{T}f=\Hess\, h$ and the
  inequality \eqref{B-control} for $B\cap 4B_j$, we have
  \begin{align*}
    \left(\frac{1}{\mu(B\cap 4B_j)}\int_{B\cap 4B_j}|\wz T P_{kr^2}f|^p\,d\mu\right)^{1/p}
    &= \left(\frac{1}{\mu(B\cap 4B_j)}\int_{B\cap 4B_j}|\Hess P_{kr^2}h|^p\,d\mu\right)^{1/p}\\
    &\ls \left(\frac{1}{\mu(B)}\int_B |\Hess P_{kr^2}h|^p\,d\mu\right)^{1/p}.
  \end{align*}
  We write
  \begin{align*}
    \Hess P_{kr^2}h=\sum_{i=1}^{\infty}\Hess P_{kr^2}(h\mathbbm{1}_{C_i})=\sum_{i=1}^{\infty}\Hess P_{kr^2}g_i,
  \end{align*} 
  where $g_i=h\mathbbm{1}_{C_i}$ and $C_i$ is as in \eqref{C-B}. Next,
  we distinguish the two regimes $i\leq i_r$ and $i>i_r$ where $i_r$
  is defined as in \eqref{def-ir}. In the regime $i\leq i_r$, by the
  inequality \eqref{Hess-f-esti1} in Lemma \ref{lem-small-radii}, we
  have
  \begin{align*}
    &\left(\frac{1}{\mu(B)}\int_B|\Hess P_{kr^2}g_i|^p\,d\mu\right)^{1/p}\\
    &\leq C\e^{-\alpha_1 4^i}\left(\left(\frac{1}{\mu(2^{i+1}B)}\int_{C_i}|\nabla h|^2\,d\mu\right)^{1/2}+\left(\frac{1}{\mu(2^{i+1}B)}\int_{C_i}|\Hess\, h|^2\,d\mu\right)^{1/2}\right)\\
    &\leq  C\e^{-\alpha_1 4^i}\left(\left(\SM_{\rm loc}(|\nabla h|^2)(x)\right)^{1/2}+\left(\frac{1}{\mu(2^{i+1}B)}\int_{C_i}|\Hess\, h|^2\,d\mu\right)^{1/2}\right).
  \end{align*}
  On the other hand, since $C_i\subset 2^{i+1}B$, we know
  \begin{align*}
    &\frac{1}{\mu(2^{i+1}B)}\int_{C_i}\big|\Hess\, h\big|^2\,d\mu\\
    &\leq \frac{1}{\mu(2^{i+1}B\cap 4B_j)}\int_{(2^{i+1}B)\cap 4B_j}|\Hess\, h|^2d\mu+\frac{1}{\mu(2^{i+1}B)}\int_{2^{i+1}B}\mathbbm{1}_{M\setminus 4B_j}|\Hess\, h|^2\,d\mu\\
    &\leq \SM_{4B_j}(|\Hess\, h|^2)(x)+\SM_{\rm loc}(|\Hess\, h|^2\mathbbm{1}_{M\setminus 4B_j})(x),
  \end{align*}
  for any $x\in B\cap 4B_j$. Hence in this case
  \begin{align}\label{part-1}
    &\left(\frac{1}{\mu(B)}\int_B \big|\sum_{i=1}^{i_r}\Hess P_{kr^2}g_i\big|^p\,d\mu\right)^{1/p} \\
    &\leq \sum_{i=1}^{ i_r}C\e^{-\alpha 4^i} \Big(\SM_{\rm loc}(|\nabla h|^2)+\SM_{4B_j}(|\Hess\,h|^2)+\SM_{\rm loc}(|\Hess\,h|^2\mathbbm{1}_{M\setminus 4B_j})\Big)^{1/2}(x).\notag
  \end{align}
  For the second regime $i>i_r$, we proceed with inequality \eqref{Hess-f-esti1} in Lemma \ref{lem-small-radii} that there exist 
    positive constants $c_1$ and $c_2$ such that   
  \begin{align}\label{Hess-gi}
    \left(\frac{1}{\mu(B)}\int_B|\Hess P_{kr^2}g_i|^p\,d\mu\right)^{1/p}\leq \frac{c_1\e^{-c_2 4^i}}{r^2}\left(\frac{1}{\mu(2^{i+1}B)}\int_{C_i}|h|^2\,d\mu\right)^{1/2}.
  \end{align}
  On the other hand, since $i>i_r$, it is easy to see
  $4B_j\subset2^{i+1}B$, thus
  \begin{align}\label{h-esti}
    \Big(\frac{1}{\mu(2^{i+1}B)}\int_{C_i}|h|^2\,d\mu\Big)^{1/2}&\leq \left(\frac{1}{\mu(2^{i+1}B)}\int_0^1 v(t)\int_{C_i} P_t|f|^2\, d\mu \, dt \right)^{1/2}\\
                                                                &\leq C\left(\frac{1}{\mu(4B_j)}\int_{B_j}|f|^2\,d\mu\right)^{1/2}=C\left(\frac{1}{\mu(4B_j)}\int_{B_j}|f|^2\,d\mu\right)^{1/2} \notag\\
                                                                &\leq C\big(\SM_{4B_j}(|f|^2)(x)\big)^{1/2}. \notag
  \end{align}
  The contribution of the terms in the second regime $i>i_r$ is
  bounded by combining \eqref{Hess-gi} and \eqref{h-esti},
  \begin{align}\label{part-2}
    \sum_{i>i_r}\left(\frac{1}{\mu(B)}\int_B|\Hess P_{kr^2}g_i|^p\,d\mu\right)^{1/p}\leq \sum_{i>i_r} \frac{c_1\e^{-c_2 4^i}}{r^2}\big(\SM_{4B_j}(|f|^2)(x)\big)^{1/2}
  \end{align}
  and it remains to recall that $1/r^2\leq 4^i/8$ and
  $1/r\leq 2^i/{(2\sqrt{2})}$ when $i>i_r$.

  We then conclude from \eqref{part-1} and \eqref{part-2} that for any
  $p>2$ and $k\in \{1,2,\ldots,n\}$, there exists a constant $C$
  independent of $j$ such that
  \begin{align*}
    \left(\frac{1}{\mu(B\cap 4B_j)}\int_{B\cap 4B_j}|\wz{T}P_{kr^2}f|^p\,d\mu\right)^{1/p}\leq C(\SM_{4B_j}(|\wz{T}f|^2)+(S_jf)^2)^{1/2}(x)
  \end{align*}
  all $f\in L^2(M,\mu)$ supported in $B_j$, all balls $B$ in $M$ and
  all $x\in B\cap 4B_j$, where the radius $r$ of $B$ is less than 8,
  and where
  \begin{align}\label{eqn-squref}
    (S_jf)^2:=\SM_{\rm loc}(|\wz{T}f|^2\mathbbm{1}_{M\setminus 4B_j})
    +\SM_{\rm loc}(|\nabla h|^2)+\SM_{4B_j}(|f|^2).
  \end{align}

  Our last step is to show that the operator $S_j$ defined in
  \eqref{eqn-squref} is bounded from $L^p(B_j)$ to $L^p(4B_j)$ for any
  $p\in (2,\fz)$ with operator norm independent of $j$.  By
  \eqref{eqn-squref}, we only need to show that the operators
$$\big(\SM_{\rm loc}(|\wz{T}f|^2\mathbbm{1}_{M\setminus 4B_j})\big)^{1/2},\ \
\big(\SM_{\rm loc}(|\nabla h|^2)\big)^{1/2} \ \mbox{ and }\
\big(\SM_{4B_j}(|f|^2)\big)^{1/2}$$ are respectively bounded from
$L^p(B_j)$ to $L^p(4B_j)$.  Indeed, for any $f\in L^p(4B_j)$, by Lemma
\ref{lem-fop} we know that $4B_j$ satisfies the doubling property
\eqref{eqn-D}, which combined with $p>2$ implies that
$\big(\SM_{4B_j}(|f|^2)\big)^{1/2}$ is bounded from $L^p(B_j)$ to
$L^p(4B_j)$ by a constant depending only on the doubling property
\eqref{eqn-D}.  On the other hand, using the fact from Bismut's
formula \cite{Bismut} that
\begin{align*}
  \lf\| \sqrt t\nabla P_t f \r\|_{p} \ls \e^{Kt} \|f\|_p,
\end{align*}
we deduce from $p>2$ and $\sigma>2K+\theta$ that
\begin{align*}
  \big\|\lf(\SM_{\rm loc}(|\nabla h|^2)\r)^{1/2}\big\|_p
  \ls \|\nabla h\|_{p}\leq \left(\int_0^{\infty}\frac{v(t)\,\e^{ Kt}}{\sqrt{t}}\,dt\right)\,\|f\|_{L^p(B_j)}\leq C\|f\|_{L^p(B_j)}.
\end{align*}
Finally, the $L^p$-boundedness of
$$\big(\SM_{\rm loc}(|\wz{T}f|^2\mathbbm{1}_{M\setminus
  4B_j})\big)^{1/2}$$ follows from the $L^{p/2}$-boundedness of
$\big(\SM_{\rm loc}(|\wz{T}f|^2\big)^{1/2}$ and an argument similar to
\eqref{eqn-wztqp}. This implies that the operator $S_j$ is bounded
from $L^p(B_j)$ to $L^p(4B_j)$ with an upper bound independent of $j$.

We conclude that the requirements (i) and (ii) in Lemma
\ref{theorem-add-2} both hold under the condition \eqref{eqn-H}.
Thus, the operator $\wz{T}$ is bounded from $L^p(B_j,\mu)$ to
$L^p(4B_j,\mu)$ for $p>2$ with a constant independent of~$j$.
Therefore, by Lemma \ref{lem-tpg2}, the operator $T$ is strong type
$(p,p)$ for $p>2$. This proves Theorem~\ref{theorem-2}.
\end{proof}

\section{Appendix}

In this appendix, we include the proof 
of {\bf (CZ)}$(2)$ in the case that the underlying manifold $M$ has a lower bound Ricci curvature 
for reader's convenience (see \cite{GP-15, Pigola}). 
To be precise, let $M$ be a complete Riemannian manifold satisfying the curvature condition
\eqref{eqn-Ric}.  For any
$u\in C_0^{\infty}(M)$, Bochner's formula gives
\begin{align}\label{eqn-BI}
  -\frac{1}{2}\Delta |\nabla u|^2=|\Hess u|_\HS^2-g(\nabla \Delta u, \nabla u)+\Ric(\nabla u,\nabla u).
\end{align}
Integration by parts, together with the lower Ricci bound, leads to
\begin{align*}
  0&=-\frac{1}{2}\int_M \Delta |\nabla u|^2\\
   &=\int_M |\Hess u|_\HS^2-\int_M g(\nabla \Delta u, \nabla u)+\int_M \Ric(\nabla u, \nabla u)\\
   &\geq \int_M |\Hess u|_\HS^2-\int_M(\Delta u)^2-K\int_M |\nabla u|^2\\
   &=\int_M |\Hess u|_\HS^2-\int_M(\Delta u)^2-K\int_M u\Delta u.
\end{align*}
Then, by estimating the last integral via Young's inequality, we
obtain for any $\epz>0$,
\begin{align}\label{T2-2}
  \|\,|\Hess u|\,\|_{L^2}^2\leq\|\,|\Hess u|_\HS\,\|_{L^2}^2\leq \frac{K\epz^2}{2}\|u\|_{L^2}^2+\lf(1+\frac{K}{2\epz^2}\r)\|\Delta u\|_{L^2}^2,
\end{align}
which establishes {\bf CZ(2)}.

\begin{remark}
  Inequality \eqref{T2-2} extends from $u\in C_0^{\infty}(M)$ to
  $u\in H^{2,2}(M)$.  Thus, in particular, if $u\in L^2(M)$ is a
  distributional solution to the Poisson equation
  \begin{align}\label{eqn-Poisson}
    \Delta u=f \quad \text{on }M
  \end{align}
  for some $f\in L^2(M)$, then \eqref{T2-2} provides an $L^2$-Hessian
  estimate of the solution $u$.
\end{remark}
 
\begin{remark}
  Recall that for $L^2$-gradient estimates of $u$ in
  \eqref{eqn-Poisson}, the lower bound on the Ricci tensor is not
  needed.  Indeed, integrating \eqref{eqn-Poisson} by parts and using
  Young inequality, one obtains directly for every $\epz>0$,
  \begin{align*}
    \int_M|\nabla u|^2=\int_Mf u
    \leq \frac{\epz^2}{2}\int_M u^2+\frac{1}{2\epz^2}\int_M f^2,
  \end{align*}
  which is the $L^2$-gradient estimate
  \begin{align}\label{eqn-Lge}
    \|\nabla u\|^2_{L^2}\leq \frac{\epz^2}{2}\|u\|_{L^2}^2+\frac{1}{2\epz^2}\|f\|_{L^2}^2.
  \end{align}
\end{remark}

\bibliographystyle{amsplain}%

\bibliography{C-Z-inequality}
\end{document}